\documentclass{amsart}
\usepackage{amssymb}
% for \textfrac, \textgoth, and \textswab use
\usepackage{yfonts}
\setlength{\marginparwidth}{1.12in}

% GENERAL MACROS

% Allows coloring of output
\usepackage{color}  
% Example of definition of color, (red, blue etc. already defined)
\definecolor{darkgreen}{cmyk}{1,0,1,.5}
\definecolor{green}{cmyk}{1,0,1,0}
\definecolor{m}{rgb}{1,0.1,1}
\definecolor{test}{rgb}{0,0,1}
\definecolor{cmyk}{cmyk}{0,1,1,0}

% then do the following:

%
%%%%% Theorem type environments(all based on counter=Equation)
%%%%% for equation environment, use \begin{Equation}
%

\newtheorem{Equation}{}[section]

\newtheorem{example}[Equation]{Example}
\newtheorem{theorem}[Equation]{Theorem}
\newtheorem{proposition}[Equation]{Proposition}
\newtheorem{lemma}[Equation]{Lemma}
\newtheorem{corollary}[Equation]{Corollary}

\newtheorem{definition}[Equation]{Definition}
 
\newtheorem{remark}[Equation]{Remark}

\def\I{\operatorname{I}}

\def\End{\operatorname{End}}

\def\oH{\operatorname{H}}

\def\Supp{\operatorname{Supp}}

\def\vol{\operatorname{vol}}

\def\C{\mathbb C}

\def\R{\mathbb R}

\def\Z{\mathbb Z}

\def\Q{\mathbb Q}

\def\maE{{\mathcal E}}
\def\maF{{\mathcal F}}

\def\maG{{\mathcal G}}

\def\maG{{\mathcal G}}

\def\maJ{{\mathcal J}}

\def\cS{{\mathcal S}}
\def\maS{{\mathcal S}}

\def\what{\widehat}
\def\wtit{\widetilde}

\def\blangle{\big{\langle}}
\def\brangle{\big{\rangle}}

\def\dd{\displaystyle}
\def\pa{\partial}

\def\ep{\epsilon}

% This is WIDEPAGE.STY, gives a reasonable size page with 12pt type

\marginparwidth 0pt
\oddsidemargin  0pt
\evensidemargin  0pt
\marginparsep 0pt
\topmargin   0pt
\textwidth 6.5 in \textheight 8.5 in
%\textwidth 6.5 in \textheight 9.0 in

%To change red color output to Large Boldface when printing on a non-color printer use

%\def{\large \bf }    

%This requires putting { } around the red stuff, and same for other colors.

\begin{document}

%%%%%%%%%%%%%%%%%%%%%%%%%%%%%%%%%%%%%%%%%%%%%%%%%%%%%%%
%                                                     %
%   THE MANUSCRIPT BEGINS HERE                        %
%                                                     %
%%%%%%%%%%%%%%%%%%%%%%%%%%%%%%%%%%%%%%%%%%%%%%%%%%%%%%%

%%% TITLE

\title[symbol calculus for  foliations \today]
{A symbol calculus for  foliations\\
\today}

%%% AND THE AUTHORS ARE:

\author{Moulay Tahar Benameur}
\address{Institut Montpellierain Alexander Grothendieck, UMR 5149 du CNRS, Universit\'e de Montpellier}
\email{moulay.benameur@umontpellier.fr}

\author[J.  L.  Heitsch \today]{James L.  Heitsch}
\address{Mathematics, Statistics, and Computer Science, University of Illinois at Chicago} 
\email{heitsch@uic.edu}

\thanks{MSC (2010): 47G30, 53C12. 
Key words: foliations, symbol calculus, asymptotic operators}

\begin{abstract} The classical Getzler rescaling theorem of \cite{Getz} is extended to the transverse geometry of foliations.  More precisely, a Getzler rescaling calculus, \cite{Getz}, as well as a Block-Fox calculus of asymptotic pseudodifferential operators (A$\Psi$DOs), \cite{BF}, is constructed for all transversely spin foliations.  This calculus applies to operators of degree $m$ globally times degree $\ell$ in the leaf directions, and is thus an appropriate tool for a better understanding of the index theory of transversely elliptic operators on foliations \cite{CM95}. The main result is that the composition of A$\Psi$DOs is again an A$\Psi$DO, and includes a formula for the leading symbol. Our formula is more complicated due to its wide generality but its form is essentially the same, and it simplifies notably for Riemannian foliations. In short, we construct an asymptotic   pseudodifferential calculus for the ``leaf space" of any foliation. Applications will be derived in \cite{BH2012, BH2012b} where we give a Getzler-like proof of a local topological formula for the Connes-Chern character of the Connes-Moscovici spectral triple of \cite{K97},  as well as the (semi-finite) spectral triple given  in \cite{BH2012}, yielding an extension of the seminal Atiyah-Singer $L^2$ covering index theorem, \cite{A}, to coverings of  ``leaf spaces" of foliations.   
\end{abstract}

\maketitle
\tableofcontents

\section{Introduction} 

In \cite{Getz}, Getzler gave an elegant proof of the local Atiyah-Singer index formula for a twisted Dirac operator on a compact spin manifold $M$, which was first proposed by Alvarez-Gaum\'{e}, \cite{AG}.  It is well known that such a result leads immediately to the local Atiyah-Singer index theorem.  To do so, he used a pseudodifferential calculus based on work of Bokobza-Haggiag \cite{B-H} and Widom \cite{Wid1,Wid2}, and he introduced a grading and a corresponding rescaling on the  space of symbols of pseudodifferential operators ($\Psi$DOs) on the twisted spin bundle, which treated Clifford multiplication by a $k$-co-vector  as a differential operator of degree $k$.  These results have been generalized in a number of directions, including  hypoelliptic operators and singular geometric situations.  See for instance \cite{BismutHypo}, \cite{Lescure}, \cite{Ponge},... 

 In this paper we extend the Getzler rescaling calculus of \cite{Getz} to transversely spin foliations, as well as the Block-Fox calculus of asymptotic pseudodifferential operators (A$\Psi$DOs), \cite{BF}, and we generalize them to operators of type $(m,\ell)$, that is of degree $m$ globally times degree $\ell$ in the leaf directions.  This is in the spirit of Kordyukov, \cite{K97}.  In particular, the space of symbols of  type $(m,-\infty)$ are the symbols of grading $m$ on the ``space of leaves" of the foliation, so they are the appropriate space to use for transverse index theory of foliations. 
 In short, we extend these theories to the ``leaf spaces" of foliations.   
  
Note that while the extension of the rescaling formulae to leafwise operators on foliations, in the presence of say a holonomy invariant measure, is a routine (although interesting) exercise, the rescaling theorem in the transverse directions has remained an open problem. 

We concentrate here on the formula for the symbol of an A$\Psi$DO which is the composition of two such operators.  The classical cases considered by Getzler and Block-Fox are the case of a foliation by points. Even though our situation is more general, the  formula is essentially the same. 
 
Recall that given a symbol $p$, Getzler defined a quantization map $\theta$ which produces a $\Psi$DO $\theta(p)$, and given a $\Psi$DO $P$, there is an associated symbol $\varsigma(P)$.   Denote the Getzler rescaling of a symbol $p$ by $p_t$.  Then the composition of two symbols is defined as
$$
p \circ_t q \,\, = \,\,   \varsigma(\theta(p_t) \circ \theta(q_t))_{t^{-1}}.
$$
The power of this calculus lies in the computability of  $\lim_{t \to 0} p \circ_t q $, and that the trace of $\theta(p_t) \circ \theta(q_t)$ (which of course is intimately related to the index of operators)  is determined by this limit.   Block and Fox define an asymptotic symbol $p(t)$ of grading $n$ to be one which has an asymptotic expansion as $t \to 0$ of the form
$$
p(x,\xi,t) \,\, = \,\, \sum_{k=0}^{\infty} t^k p_k(x,\xi),
$$
where $p_k$ is a symbol of grading $n-k$.   Here $(x,\xi) \in T^*M_x$ and $p_k(x,\xi)$ acts on the fiber of the twisted spin  bundle at $x \in M$.  The operator $p_0$ is the leading symbol of $p$.    An A$\Psi$DO is then one which is  of the form $P_t = \theta(p(t)_t)$.    The main theorem of \cite{BF}, which is an extension of the Getzler theorem, states that the composition of A$\Psi$DOs is again an A$\Psi$DO, and its leading symbol is given by the formula
$$
p_0 \,\, = \,\, e^{-\frac{1}{4}\Omega(\pa/\pa \xi,\pa/\pa \xi')}
p_0(x,\xi) \wedge q_0(x, \xi') \, |_{\xi'=\xi},
$$
where $p_0$ and $q_0$ are the leading symbols of $p$ and $q$, and $\Omega(\pa/\pa \xi,\pa/\pa \xi')$ is a differential operator constructed out of the curvature of the connection used to define the twisted Dirac operator.\\

We  introduce here a symbol calculus,  as well as an asymptotic pseudodifferential calculus, for a bounded-geometry foliated manifold $(M, F)$  whose normal bundle is  spin.  The spin hypothesis plays the crucial role of simplifying the computations in the proof of the associated index theorem, but is not required for the final local index formula, exactly as in the Getzler case of a foliation by points. Our symbols are of the form $p(x,\xi,\sigma)$, where $\xi$ is a global co-vector, and $\sigma$ is a leafwise co-vector.  We use the grading which treats Clifford multiplication by a {\em transverse} $k$-co-vector as a differential operator of order $k$.  Our main result is that the composition of A$\Psi$DOs is again an A$\Psi$DO, and its leading symbol $a_0(p,q)$ is given by the formula
$$  
a_0(p,q)(x, \xi, \sigma) \,\, = \,\, e^{-\frac{1}{4}\Omega_{\nu}(\pa/\pa (\xi,\sigma),\pa/\pa (\xi',\sigma'))}
p_0(x,\xi,\sigma) \wedge q_0(x, \xi',\sigma') \, |_{(\xi',\sigma')=(\xi,\sigma)},
$$
where $p_0$ and $q_0$ are the leading symbols of $p$ and $q$. See Definition \ref{Omega} for the notation.   The operator $\Omega_{\nu}(\pa/\pa (\xi,\sigma),\pa/\pa (\xi',\sigma'))$ is quite similar to $\Omega(\pa/\pa \xi,\pa/\pa \xi')$, but involves more terms, and is determined by the curvature $\Omega_{\nu}$ of the connection on the normal bundle to the foliation $F$. 

When $F$ is a Riemannian foliation, this formula for the leading symbol  has a particularly simple form.  Write $\xi = (\eta,\zeta)$,  where $\eta$ is the projection of $\xi$ to the co-normal bundle of $F$, and $\zeta$ is its projection to the co-tangent bundle of $F$.  Then the leading symbol simplifies to
$$
a_0(p,q)(x, \eta, \zeta, \sigma) \,\, = \,\,  
e^{-\frac{1}{4}\Omega_{\nu}(\pa/\pa \eta,\pa/\pa \eta')}
p_0(x,\eta, \zeta, \sigma) \wedge q_0(x, \eta', \zeta, \sigma) \, |_{\eta'=\eta},
$$
and the operator $\Omega_{\nu}(\pa/\pa \eta,\pa/\pa \eta')$ is identical to the one in \cite{Getz} and \cite{BF}. In the non-Riemannian case, as expected, interesting extra terms do contribute, compare \cite{CM95,ConnesMoscoviciHopf}. 

{{The assumption of bounded geometry allows us to work on non-compact manifolds as well as compact ones, for which the assumption is automatic.  It allows us to obtain local estimates which are uniform over the whole manifold, with respect to the canonical coordinate charts we use, which is not true in general for non-compact manifolds. This category is stable under taking Galois coverings as well as passing to the foliation of the holonomy or monodromy groupoids as far as these groupoids are Hausdorff. }}

More specifically, our two main results are as follows.  Denote by $\varsigma(P)$ the symbol of a differential operator $P$, (Definition \ref{SymbOnM}), and by $\theta^{\alpha}(p)$ the quantization of a symbol $p$ (Definition \ref{Quantization}).\\

\noindent
{\bf Theorem  \ref{comp}} [Theorem 2.7 of \cite{Getz}, Theorem 2.1 of \cite{BF}]
Let $p(x,\xi,\sigma)$ and $q(x,\xi,\sigma)$ be symbols which are polynomial in $\xi$ and $\sigma$, with gradings $k_p$ and $k_q$, respectively.  Set $p \circ q   =  \varsigma(\theta^{\alpha}(p) \circ\theta^{\alpha}(q))$.  Then there are differential operators $a_k$,   $k \geq 0$,   which act on pairs of symbols so that: 
\begin{enumerate}

\item  $\dd a_k(p,q)$ is a symbol which is polynomial in $\xi$ and $\sigma$, with grading $k_p + k_q - k$; 

\item
$p \circ q \,\, = \,\, \sum_0^{\infty} a_k(p,q)$, which is actually a finite sum;

\item
$a_0(p,q)(x, \xi, \sigma)  \,\, = \,\, e^{-\frac{1}{4}\Omega_{\nu}(\pa/\pa (\xi,\sigma),\pa/\pa (\xi',\sigma'))}
p(x,\xi,\sigma) \wedge q(x, \xi',\sigma') \, |_{(\xi',\sigma')=(\xi,\sigma)}$.
\end{enumerate}

\medskip

Our proof of this theorem is a direct computation, using Proposition 3.7 of Atiyah-Bott-Patodi \cite{ABP}, a paper whose results we use heavily, so we avoid the use of the daunting Baker-Campbell-Hausdorff formula used by  \cite{Getz} and \cite {BF}.   There is a cancellation of operators in the proof of Theorem \ref{comp}, which is obvious if one uses \cite{ABP}, and is not at all obvious if one uses BCH.  This leads to a much simpler proof and to a nicer formula. It was the fact that the BCH formula does not adapt well to our case which led us to a deeper understanding of Proposition 3.7 of \cite{ABP}, which is essential to the proof of Theorem \ref{comp}, which in turn is essential to the proof of our second main result:  

\medskip\noindent
{\bf Theorem \ref{mainlemma}} [Theorem 3.5 of \cite{Getz}, Lemma 3.10 of \cite{BF}]\\
{\em Let $p(x,\xi,\sigma,t)$ and $q(x,\xi,\sigma,t)$ be asymptotic symbols, with associated  A$\Psi$DOs $P_t = \theta^{\alpha}(p_t)$ and $Q_t = \theta^{\alpha}(q_t)$.  Then
\begin{enumerate}
\item  $P_t \circ Q_t$ is an A$\Psi$DO.

\item  The leading symbol of $P_t \circ Q_t$ is $a_0(p_0,q_0)$, where $p_0$ and $q_0$ are the leading symbols of $p$ and  $q$.
\end{enumerate}}
\medskip

An important difficulty we encounter is how to define the symbol map for operators adapted to foliations.  This occurs because the variables $\zeta$ (the $T^*F$ part of $\xi$) and $\sigma$ correspond to the same spaces, namely the leaves of $F$.  The solution is to make $\sigma$ correspond to an independent space, namely the tangent spaces of the leaves of $F$.  We do this by replacing $M$ by $TF$, the total space of the tangent bundle of $F$.    We must take great care when we do so that the definitions of the symbol and the quantization used in $TF$ respect the definitions of the symbol and the quantization we use in Sections \ref{symbs} and \ref{Sodo} on $M$. The reader should note that the fact that $F$ is a foliation is used in the proof of the  Theorem  \ref{mainlemma}, as it gives us the control we need over the changes of variables used in that proof.

\medskip

As mentioned above,  the above theorems allowed us to extend  the proof of the Atiyah-Singer index theorem given in \cite{Getz}, and the calculation of the cyclic cocycle for the Dirac operator to the transversely elliptic case.  This leads immediately to an extension of the Atiyah-Singer $L^2$ covering index theorem, \cite{A}, to leaf spaces. The results of this paper are also used in the integrality consequences of the  main theorem of \cite{BH-Lefschetz1}, which are treated, among other applications, in \cite{BH-Lefschetz2}.

\medskip
\noindent
{\em Acknowledgements.} We benefited from discussions with many colleagues during the preparation of this work and we thank them all.  We are in particular grateful to the following people:   A. Carey, T. Fack, V. Gayral, G. Hector, D. Perrot,  H. Posthuma, G. Skandalis,  and S. Zelditch.  MB wishes to thank the french National Research Agency for support via the project ANR-14-CE25-0012-01 (SINGSTAR).  {{Finally, we would like to thank the referee whose careful reading and cogent suggestions were very helpful.}}
 
\section{Symbols adapted to foliations}\label{symbs}

Let $F$ be a smooth  foliation of the  smooth Riemannian manifold $M$, where  the dimension of the leaves is $p$ and the dimension of  $M$ is $n$. So the codimension of the foliation is $q=n-p$, which we assume to be even.    The normal bundle of $F$ is denoted $\nu$ and its co-normal bundle  $\nu^*$.  We assume that the metric on $M$ as well as the metrics induced on the leaves of $F$ are of bounded geometry.  In particular the manifold $M$ is complete and so are all the leaves of the foliation $F$. We also assume that all the bundles we use have bounded geometry, which, of course, is automatic for the usual geometric bundles.
 
Assume that $F$ is transversely spin with a fixed spin structure on  $\nu^*$, and denote by $\cS_{\nu}$ the associated spin bundle.   The  connection on  $\nu^*$ is denoted $\nabla^{\nu}$ and the associated spin connection it induces on $\cS_{\nu}$ is denoted $\nabla^{\cS}$.   Let $E \to M$ be a smooth complex vector bundle over $M$ endowed with a Hermitian structure and associated connection $\nabla^E$. We denote by $\nabla$  the resulting tensor product connection on the bundle $\cS_{\nu} \otimes E$. We will also use the Levi-Civita connection on $M$, which is denoted $\nabla^{LC}$. 
 
Denote by $T^*M$ and $T^*F$ the cotangent bundles, and by $TM$ and $TF$ the tangent bundles.  Let $\pi:T^*M  \oplus  T^*F \to M$ be the projection. 

\begin{definition} \label{symbdef}
The symbol space $S^{m,\ell} (M, E)$ consists of all $p \in  C^{\infty}( T^*M  \oplus  T^*F, \pi^*(\End (\cS_{\nu} \otimes E)))$  so that, for any multi-indices $\alpha$, $\beta$ and $\lambda$,  there is a constant $C_{\alpha,\beta,\lambda} > 0$,  such that
$$
||  \,   \pa^{\alpha}_{\xi} \pa^{\beta}_{\sigma}  \pa^{\lambda}_{x}p(x,\xi,\sigma) \, || \,\, \leq \,\, 
C_{\alpha,\beta,\lambda}(1 +  |\xi|)^{m- |\alpha|}(1 + |\sigma|)^{\ell-|\beta|}.
$$
The topology on $S^{m,\ell} (M, E)$ is given by the semi-norms 
$$
\rho_{\alpha,\beta,\lambda}  \,\, = \,\  \inf  \big{\{}  C_{\alpha,\beta,\lambda} \, | \,\,   ||  \,   \pa^{\alpha}_{\xi} \pa^{\beta}_{\sigma}  \pa^{\lambda}_xp(x,\xi,\sigma) \, || \,\, \leq \,\, 
C_{\alpha,\beta,\lambda}(1 +  |\xi|)^{m- |\alpha|}(1 + |\sigma|)^{\ell-|\beta|}  \big{\}} .
$$
\end{definition}
\medskip
Of course, $\pa^{\alpha}_{\xi} \pa^{\beta}_{\sigma}  \pa^{\lambda}_{x}p(x,\xi,\sigma)$ only makes sense if we specify local coordinates.  We will use the so-called ``normal coordinates" associated to the metric on $M$ {{and the above estimates are assumed to hold in all such normal charts of a(ny) given distinguished atlas}}.  Normal coordinates at a point $x \in M$ are given by choosing a neighborhood $U_x$ of $0 \in TM_x$ on which $\exp:TM_x \to M$ is a diffeomorphism, and an orthonormal basis of $TM_x$, which defines coordinates $(x_1,\ldots,x_n)$ on $TM_x$.   This then defines coordinates (also denoted $(x_1,\ldots,x_n)$) in the neighborhood $\exp(U_x)$ of $x$.  In addition, it also induces coordinates on $T^*M$ and $T^*F$.  Because of our assumptions of bounded geometry, no pathologies occur.  The reader should note that for $(x_1,...,x_n)$ normal coordinates at $x \in M$, we  always assume that $\nu_x$ is spanned by $\pa/\pa x_1, ..., \pa/\pa x_q$.

{{So in particular,  a sequence $p_n \in  C^{\infty}( T^*M  \oplus  T^*F, \pi^*(\End (\cS_{\nu} \otimes E)))$ converges to $p$ if and only if,  for each set of  multi-indices $\alpha$, $\beta$ and $\lambda$ and each $n$, there is a constant $C_{n, \alpha,\beta,\lambda} > 0$,  such that $\lim_{n \to \infty}  C_{n, \alpha,\beta,\lambda} = 0$ and 
$$
||  \,   \pa^{\alpha}_{\xi} \pa^{\beta}_{\sigma}  \pa^{\lambda}_{x}[ p(x,\xi,\sigma) -p_n(x,\xi,\sigma)] \, || \,\, \leq \,\, 
C_{n,\alpha,\beta,\lambda}(1 +  |\xi|)^{m- |\alpha|}(1 + |\sigma|)^{\ell-|\beta|},
$$
for all elements of a fixed atlas of normal coordinates for $T^*M  \oplus  T^*F$.}}

It is easy to check that pseudodifferential symbols of order $m$ on $M$, using the variable $\xi$, are  symbols of type $(m, 0)$ while  leafwise pseudodifferential symbols of order $\ell$, using the variable $\sigma$,  are  symbols of type $(0, \ell)$.  See for instance \cite{K97}.

We denote by $\wedge^* \nu^*$ the complexified Grassmann algebra bundle. Then
$$
C^{\infty}(T^*M  \oplus   T^*F, \pi^*(\End(\cS_{\nu} \otimes E)))  \,\, \cong \,\,   
C^{\infty}(T^*M  \oplus  T^*F,  \pi^*(\wedge^* \nu^* \otimes  \End( E))),
$$
as $\wedge^* \nu^*  \cong {\rm Cliff}(\nu^*) $, and since $q$ is even,  $\End(\cS_{\nu})  \cong  {\rm Cliff}(\nu^*)$, where ${\rm Cliff}(\nu^*)$ is the Clifford algebra.  The reader should note carefully that when we represent endomorphisms of $\cS_{\nu}$ as elements of $C^{\infty}(\wedge^* \nu^*)$ and we compose them, the operation we use is Clifford multiplication and not wedge product.  In this regard,  recall  that Clifford multiplication on elements of $C^{\infty}(\wedge^* \nu^*)$ {{is given in terms of the wedge product and inner product by the equation}}
\begin{Equation}\label{wedge}
$\hspace{1cm}
\omega_a \cdot \omega_b \,\, = \,\,   \omega_a  \wedge  \omega_b   \,\, \pm \,\,   
\sum_k (i_{e_k}  \omega_a ) \wedge (i_{e_k}  \omega_b )   \,\, \pm \,\,   
\sum_{k,\ell} (i_{e_k} i_{e_{\ell}} \omega_a ) \wedge (i_{e_k} i_{e_{\ell}}  \omega_b )   \,\, + \,\,
\cdots
$
\end{Equation}
\noindent
where $e_1,...,e_q$ is a local orthonormal basis for $\nu$.   
Note also that we use the convention $\omega \cdot \omega =  -\langle \omega, \omega \rangle =-||\omega||^2$ for co-vectors. 

Following \cite{Getz}, we treat Clifford multiplication by a {\em normal} $k$-co-vector as a differential operator of order $k$.  Thus if  $p \in  C^{\infty}( T^*M  \oplus  T^*F, \pi^*(\End (E))) \cap S^{m-k,\ell} (M, E)$, (so involves no Clifford multiplication) and $\omega \in C^{\infty}(\pi^*(\wedge^k \nu^*))$, we say $\omega \otimes  p$ has grading $m + \ell$.  Note that any symbol can be written as a sum of such symbols.

\begin{definition}
The symbol space $SC^{m,\ell} (M, E)$ is 
$$
SC^{m,\ell} (M, E) \,\, = \,\,   \sum_{k=0}^q  S^{m-k,\ell} (M, E) \cap 
C^{\infty}(T^*M   \oplus  T^*F,  \pi^*(\wedge^k \nu^* \otimes  \End(E))).
$$
{{An element  $p \in SC^{m,\ell}(M, E)$ has grading $m+ \ell$.}}
\end{definition}
Set  
$$
SC^{\infty,\infty}(M, E)=\bigcup_{m,\ell} SC^{m,\ell} (M, E)  \text{ and }
SC^{-\infty,-\infty}(M, E)=\bigcap_{m,\ell} SC^{m,\ell} (M, E).
$$
 Note that  if $p$ has  support  where $|\xi|^2 + |\sigma|^2 < C$ for $C \in \R^+$, then $p\in SC^{-\infty,-\infty} (M, E)$.  

Each element of $SC^{m,\ell} (M, E)$ defines an operator  on smooth sections of $\cS_{\nu} \otimes E$ as follows.  Choose a smooth bump function $\alpha$ on $M \times M$ which is  supported in a neighborhood of the diagonal, and equals one on a neighborhood of the diagonal.  We require that the support of $\alpha$ is close enough to the diagonal that $(\pi, \exp)^{-1}:\Supp(\alpha) \to TM$ is a diffeomorphism onto the component of $(\pi, \exp)^{-1}(\Supp(\alpha))$ which contains the zero section, where  $\pi:TM \to M$ is the projection.  For each $(x,x') \in \Supp(\alpha)$ there is a unique $X \in TM_{x}$ so that $x' = \exp_{x}(X)$.  Denote by $ \mathcal{T}_{x,x'}$ the parallel translation for the bundle $\cS_{\nu} \otimes E$ along the geodesic  $\exp_{x}(tX)$, $t \in [0,1]$, from $x$ to $x'$.

For $x \in M$, $X \in TM_x$, and $u \in C^{\infty}(\cS_{\nu} \otimes E)$, set 
$$
\overline{u}_{x}(X) \,\, = \,\, \alpha(x, \exp_{x}(X)) \mathcal{T}^{-1}_{x,\exp_{x}(X)}  (u(\exp_{x}(X)),
$$
which is an element of $(\cS_{\nu} \otimes E)_x$. 

\begin{definition}\label{Quantization}
The quantization of $p \in SC^{m,\ell}(M, E)$ is denoted $\theta^{\alpha}(p)$.  It is the operator which on $u \in C^{\infty}(\cS_{\nu} \otimes E)$ is given at $x$ by
$$
\theta^{\alpha}(p)(u)(x) \,\, = \,\, (2 \pi )^{-n-p}   \int_{TM_x \times T^*M_x \times 
TF_x \times T^*F_x} \hspace{-3.0cm}
e^{-i\langle  X,\xi\rangle} e^{-i\langle  Y, \sigma -\zeta \rangle}p(x,\eta, \zeta ,\sigma)\alpha(x,\exp_{x}(Y)) \overline{u}_{x}(X)\,  dY  d\sigma dX d\xi.
$$
{{Here $p =p(x,\xi ,\sigma)  = p(x,\eta, \zeta ,\sigma)$, where $\sigma \in T^*F_x$, $\xi \in T^*M_x = \nu^*_x \oplus T^*F_x$,  $\eta \in \nu^*_x$, $\zeta \in T^*F_x$, $X \in TM_x$, and $Y \in TF_x$.}}
\end{definition}

 {{This is well defined since, for $x$ fixed, $\alpha(x,\exp_{x}(Y))\overline{u}_{x}(X)$ has compact support (near zero) in $TM_x \times TF_x$. {{Exactly as for the {{Riemannian}} pseudodifferential calculus {{{developed}} by Getzler for a single manifold \cite{Getz, BF} {{(which  also provides a description in normal coordinates for the usual pseudodifferential operators)}}, our operator $\theta^{\alpha}(p)$ is {{automatically}} a {\underline{uniformly supported}}  operator of class $\Psi^{m, \ell}$, and therefore extends to a bounded operator between any anisotropic Sobolev spaces $\oH^{s, k}$ and $\oH^{s-m, k-\ell}$, see \cite{K97} and Appendix \ref{Bifiltered} for the bounded geometry extension. However, since the operator $\theta^{\alpha}(p)$  is not compactly supported in general, when $m<0$ and $\ell<0$, it  only extends to a locally compact operator on the $L^2$-sections in the sense of \cite{RoeBook} and is not  compact in general.}}}}

One can, of course, integrate out $Y$ and $\sigma$ as indicated below, but this is at the cost of losing control of the transverse aspect of the symbols.  In particular, the space of symbols of grading $m$ on the ``space of leaves" would then not be at all obvious. 
\begin{remark}\label{inverses1}
Set 
$$
\wtit{p}(x,\xi) \,\, = \,\,  (2 \pi )^{-p}  \int_{TF_x \times T^*F_x}  \hspace{-1.0cm}
e^{-i\langle  Y,\sigma -\zeta  \rangle}p(x,\eta, \zeta ,\sigma) \alpha(x,\exp_{x}(Y))\,  dY d\sigma .
$$
{{Then
$$
\theta^{\alpha}(p)(u)(x) \,\, = \,\, 
 FT_x ^{-1} \Big[ \wtit{p}(x, \xi) \Big(FT_x (\overline{u}_{x})(\xi)\Big)\Big] (0), 
$$
where $FT_x$ is the Fourier Transform on $TM_x$.}} We are of course working here in the world of distributions, and we have normalized the metrics.
\end{remark}

\begin{remark}\label{FT1}
If $p(x,\xi ,\sigma) = \sigma^{\beta}$, then a simple calculation using the change of coordinates $\sigma \mapsto \sigma + \zeta$ shows that 
$ \wtit{p}(x, \xi) = \zeta^{\beta}$.   More generally, if  $p(x,\xi ,\sigma)$ is polynomial in $\sigma$, then $\wtit{p}(x, \xi) = p(x,\xi ,\zeta)$.
\end{remark}

\section{Symbols of differential operators} \label{Sodo} 

It is classical that {{any smooth  differential operator on $C^{\infty}(\cS_{\nu} \otimes E)$ with uniformly bounded coefficients \cite{Shubin}}} is in the image of $SC^{\infty,\infty} (M, E)$ under the quantization map $\theta^{\alpha}$, since it is supported on the diagonal.    We want to define  a symbol map for operators which satisfies the usual compatibility conditions \cite{Getz, BF}.  Unfortunately, the naive definition does not work, and we explain in this section how to adjust it for differential operators so that it does work.  The construction of our symbol map for general operators is postponed to Section \ref{Asymptotic}, where we extend that given here for differential operators.

In this section, we also give the basic  examples of symbols which will be used in the sequel, and make note of their gradings.  Note that symbols are not uniquely defined.  In particular, our examples will show, as does Remark \ref{FT1}, that we have a choice of how to represent the associated operators as quantizations of symbols.  We will define the symbols of differential operators so that the symbol of the quantization gives back the original symbol. 

\medskip
We first note that any differential operator {{at a fixed point}} $x \in M$ may be written as a sum of operators of the form $g\nabla_X^m$, where $X \in C^{\infty}(TM)$ is given in normal coordinates at $x$ by $X = \sum^n_{i = 1}c_i \pa/\pa x_i$,  the $c_i$ are constants,  and $g \in C^{\infty}( \wedge \nu^* \otimes \End( E))$.

{{The fact that we have two variables, $\zeta$ and $\sigma$, which correspond to the same space $TF$ causes some complications in computing symbols.  We need to have a way to decide whether to use  $\zeta$ or $\sigma$.  To do so, we divide the differential operators into two classes:  the leafwise operators, which are given at $x$ by $g\nabla_X^m$, where $X = \sum^n_{i=q+1} c_i \pa/\pa x_i$, so $X_x \in TF_x$, and that fact is part of the data;   the global operators, where $X = \sum^n_{i=1} c_i \pa/\pa x_i$, and there is no restriction on the $c_i$, so a priori $X_x \in TM_x$, (and if $X_x$ happens to be in $TF_x$,  that fact is  {\underline{not}} part of the data).  So if $X_x \in TF_x$, and that fact is part of the data, the calculations will be expressed using the variable $\sigma$.  For $X_x \in TM_x$ with no restrictions, the calculations will be expressed using the variable $\xi$, (even if $X_x$ happens to be in $TF_x$).}} 

\begin{definition}\label{derivatives}
Suppose $X \in C^{\infty}(TM)$ is given in normal coordinates at $x$ by $X = \sum^n_{i=1} c_i \pa/\pa x_i$ where the $c_i$ are constants.  For $X = \sum^n_{i=q+1} c_i \pa/\pa x_i$, so $X_x \in TF_x$, and that is part of the data, set {{(for $x'$ close to $x$)}}
$$
\nabla_{X_x}^m \langle  \exp^{-1}_{x}(x'),(\xi,\sigma) \rangle  \,\, = \,\,  \nabla_{X_x}^m \langle  \exp^{-1}_{x}(x'),\sigma \rangle,
$$
where $\sigma \in T^*F_x \subset T^*M_x$.  Otherwise, set
$$
\nabla_{X_x}^m \langle  \exp^{-1}_{x}(x'),(\xi,\sigma) \rangle \,\, = \,\,  \nabla_{X_x}^m \langle  \exp^{-1}_{x}(x'),\xi \rangle,
$$
{ { where $\xi \in T^*M_x$.}}
\end{definition}

{{ Note that since $X$ has constant coefficients, $\nabla_{X_x}^m$ is well defined at $x \in M$.}}

\begin{definition}\label{SymbOnM}  The symbol $\varsigma(P)$ of a differential operator $P$ acting on sections of $\cS_{\nu} \otimes E$  is defined as follows.  Let $x \in M$, $\xi \in T^*M_x$, $\sigma \in T^*F_x$, and $u_{x} \in (\cS_{\nu} \otimes E)_x$.  Then 
$$
\varsigma(P)(x,\xi,\sigma)(u_{x}) \,\, = \,\,  
P(x' \mapsto e^{i\langle  \exp^{-1}_{x}(x'),(\xi,\sigma) \rangle}
\alpha(x, x') \mathcal{T}_{x,x'}(u_{x})) \, |_{ x'=x}.
$$
\end{definition}
 
{{ The expression $\exp^{-1}_{x}(x'),(\xi,\sigma) \rangle$  is purely formal as is $e^{i\langle  \exp^{-1}_{x}(x'),(\xi,\sigma) \rangle}$.  To compute $\varsigma(P)(x,\xi,\sigma)(u_{x})$, we only need $\nabla_{X_x}^m (e^{i\langle  \exp^{-1}_{x}(x'),(\xi,\sigma) \rangle})$}} to make sense for $X \in C^{\infty}(TM)$ as in Definition \ref{derivatives}, and the meaning of that is obvious.

\begin{lemma}\label{sigexp1}
Suppose $X \in C^{\infty}(TM)$ is given in normal coordinates at $x$ by $X = \sum_i c_i \pa/\pa x_i$ where the $c_i$ are constants, and $g \in C^{\infty}( \wedge^k \nu^* \otimes \End(E))$.   If $X_x \in TF_x$, and that is part of the data, then
$$
\varsigma(g\nabla^m_X)(x,\xi,\sigma) \,\, = \,\, {{g\langle  iX_x, \sigma \rangle^m.}}
$$
Otherwise,
$$
\varsigma(g\nabla^m_X)(x,\xi,\sigma) \,\, = \,\, {{g\langle  iX_x, \xi \rangle^m}}. 
$$

In  both cases, the symbol has grading $m+k$, and $\theta^{\alpha} (\varsigma(g\nabla^m_X))(x) \,\, = \,\, g\nabla^m_{X}(x).$
\end{lemma}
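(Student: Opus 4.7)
The plan is to unpack the definition of $\varsigma(g\nabla^m_X)$ by applying $\nabla^m_X$ to the test function $x' \mapsto e^{i\langle\exp^{-1}_x(x'),(\xi,\sigma)\rangle}\alpha(x,x')\mathcal{T}_{x,x'}(u_x)$ and evaluating at $x'=x$. Write this as a product $\phi(x')\psi(x')$, with $\phi$ the scalar factor (exponential times $\alpha$) and $\psi(x')=\mathcal{T}_{x,x'}(u_x)$. By Leibniz,
\[
\nabla^m_X(\phi\psi)\big|_{x'=x} \;=\; \sum_{j=0}^m \binom{m}{j}\bigl(X^j\phi\bigr)(x)\,\bigl(\nabla^{m-j}_X\psi\bigr)(x).
\]
The crux is to show that $\nabla^j_X\psi\big|_x = 0$ for all $j\ge 1$, after which only the $j=m$ term survives, and one simply computes $X^m\phi$ at $x$.

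For the vanishing of $\nabla^j_X\psi|_x$, the key observation is that in normal coordinates centered at $x$, the constant-coefficient vector field $X=\sum c_i\,\partial/\partial x_i$, restricted to the geodesic $\gamma(t)=\exp_x(tX_x)$, coincides with $\dot\gamma(t)$ (both are the constant coordinate vector $(c_1,\ldots,c_n)$ along the radial line $t\mapsto tX_x$). Since $\psi$ is by definition parallel along $\gamma$ with respect to the connection $\nabla$ on $\cS_\nu\otimes E$, we get $\nabla_X\psi=\nabla_{\dot\gamma}\psi=0$ at every point of $\gamma$. Iterating along the same integral curve gives $\nabla^j_X\psi\equiv 0$ on $\gamma$, and in particular at $x$, for every $j\ge 1$.

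It remains to evaluate $X^m\phi$ at $x$. Since $\alpha\equiv 1$ in a neighborhood of the diagonal, all derivatives of $\alpha$ at $(x,x)$ vanish, so we may replace $\phi$ by $e^{i\langle\exp^{-1}_x(x'),(\xi,\sigma)\rangle}$. In normal coordinates, $\exp^{-1}_x(x')=x'$, and by Definition~\ref{derivatives} the exponent becomes $\langle x',\sigma\rangle$ when $X_x\in TF_x$ is recorded as such, and $\langle x',\xi\rangle$ otherwise. A direct computation of the directional derivative then yields $X^m\phi(x)=(i\langle X_x,\sigma\rangle)^m$ or $(i\langle X_x,\xi\rangle)^m$ respectively. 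Left multiplication by $g\in\wedge^k\nu^*\otimes\End(E)$ (Clifford action on $\cS_\nu$) gives the claimed formulas. The grading $m+k$ is immediate from the definition of $SC^{\bullet,\bullet}(M,E)$, since the polynomial factor lies in $S^{m,0}$ (resp.\ $S^{0,m}$) and we have tensored with a $k$-form in $\nu^*$.

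For the identity $\theta^{\alpha}(\varsigma(g\nabla^m_X))(x)=g\nabla^m_X(x)$, use Remark~\ref{FT1}: since the symbol is polynomial in $\sigma$ (of degree $0$ in the global case and degree $m$ in the leafwise case), the integration over $Y$ and $\sigma$ produces $\widetilde p(x,\xi)=g(i\langle X_x,\xi\rangle)^m$ in both cases (noting that $\langle X_x,\sigma\rangle$ evaluated at $\sigma=\zeta$ equals $\langle X_x,\xi\rangle$ when $X_x\in TF_x$). By Remark~\ref{inverses1}, $\theta^{\alpha}(p)(u)(x)$ is then the inverse Fourier transform at $0$ of $(i\langle X_x,\xi\rangle)^m\widehat{\overline{u}_x}(\xi)$, which equals $g\cdot\frac{d^m}{dt^m}\big|_{t=0}\overline{u}_x(tX_x)$. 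Since $\overline{u}_x(tX_x)=\alpha(x,\gamma(t))\,\mathcal{T}^{-1}_{x,\gamma(t)}u(\gamma(t))$ with $\gamma(t)=\exp_x(tX_x)$, the $m$-th $t$-derivative at $t=0$ is precisely $\nabla^m_X u(x)$ by the standard characterization of the covariant derivative via parallel transport along geodesics, completing the proof. The main obstacle is the vanishing step for $\nabla^j_X\psi|_x$; once the normal-coordinate identification $X|_\gamma=\dot\gamma$ is noticed, everything else is bookkeeping.
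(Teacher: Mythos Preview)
Your argument is correct and follows essentially the same route as the paper's proof. Both use the Leibniz expansion, the identification $X|_\gamma=\dot\gamma$ in normal coordinates to kill $\nabla_X^j\mathcal{T}_{x,x'}(u_x)$ along the radial geodesic, and Remarks~\ref{inverses1} and~\ref{FT1} to reduce the quantization statement to computing $X^m(\overline u_x)(0)$. The only presentational difference is in the last step: you invoke directly the parallel-transport characterization $\frac{d^m}{dt^m}\big|_{t=0}\mathcal{T}^{-1}_{x,\gamma(t)}u(\gamma(t))=\nabla_X^m u(x)$, whereas the paper spells this out by expanding $u$ in a parallel frame $u_j(x')=\mathcal{T}_{x,x'}(u_{j,x})$ and reducing to the scalar case; these are equivalent, and your version is a legitimate shortcut once one checks (as you implicitly do via $X|_\gamma=\dot\gamma$) that the iterated $D_t$-derivative along $\gamma$ agrees with $(\nabla_X^m u)|_\gamma$.
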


\begin{proof}  We first note that $\alpha = 1$ near $x$ so we may ignore it.  To compute the symbol for $X$ in the second case, we may restrict to the geodesic $\exp_{x}(tX)$  ($= tX$ in the normal coordinates) determined by  $X$ through $x$, and setting $x' = \exp_{x}(tX)$ in $\langle  \exp^{-1}_{x}(x'),\xi \rangle $ shows that 
$$
X\langle  \exp^{-1}_{x}(x'),\xi \rangle   =  \langle X,\xi \rangle    \quad \text{and} \quad  
X^k\langle  \exp^{-1}_{x}(x'),\xi \rangle  \,\, = \,\, 0,  \,\, \text{for  } k \geq2.
$$
Then a simple induction argument gives
$$
\varsigma(\nabla^m_X)(x,\xi,\sigma) u_{x} \,\, = \,\,   \sum_{k=0}^m  \Big(\hspace{-0.1cm}\begin{array}{c}m \\ k \end{array}\hspace{-0.1cm}\Big)   \Big[  X^k(e^{i\langle  \exp^{-1}_{x}(x'),(\xi,\sigma) \rangle})
 \nabla_X^{m-k}  \mathcal{T}_{x,x'}(u_{x})(x') \Big] \, | \,_{x' = x}   \,\, = \,\, 
$$
$$ \sum_{k=0}^m  \Big(\hspace{-0.1cm}\begin{array}{c}m \\ k \end{array}\hspace{-0.1cm}\Big) \langle  iX,\xi\rangle^k \Big[ \nabla_X^{m-k}  \mathcal{T}_{x,x'}(u_{x})(x')  \, | \,_{x' = x}\Big].
$$
Now $ \mathcal{T}_{x,x'}(u_{x})$ is the parallel translate of $u_{x}$ along geodesics through $x$.  The vector field  $X$ is tangent to the geodesic  $\exp_{x}(tX) = tX$, and on that geodesic $ \nabla_{X}   \mathcal{T}_{x,x'}(u_{x}) = 0$ identically, so for all $m-k > 0$, $ \nabla_X^{m-k}   \mathcal{T}_{x,x'}(u_{x})(x') = 0$.  

In the first case, where $X_x \in TF_x$ and that is part of the data, we need only substitute $\sigma$ for $\xi$ in $\langle  iX,\xi\rangle^{m-j}$ above.

\medskip
For the second part, first note that $\theta^{\alpha} (g\langle  iX, \xi\rangle^m) = g\theta^{\alpha} (\langle  iX, \xi\rangle^m)$, and similarly for $\theta^{\alpha} (g\langle  iX, \sigma \rangle^m)$, so we may ignore $g$.  Next note that for both $p(x,\xi,\sigma) = \langle  iX, \sigma\rangle^m$, (so $X \in TF_x$), and $p(x,\xi,\sigma) = \langle  iX, \xi \rangle^m$, (for general $X$),  $\wtit{p}(x,\xi) =  \langle  iX, \xi \rangle^m$.  Thus we have, by Remark \ref{inverses1} for both cases, that for functions $u$,  
$$
\theta^{\alpha} (\langle  iX, \xi\rangle^m)(u)(x) \,\, = \,\,  X^m (\overline{u}_{x})  (0).
$$
To know what $\theta^{\alpha} (\langle  iX, \xi\rangle^m)(u)(x)$ is for general $u$, we only need to know it for the local sections of $\cS_{\nu} \otimes E$  given by $u_j(x') =  \mathcal{T}_{x,x'}(u_{j,x})$, where the $u_{j,x}$ are a basis of $(\cS_{\nu} \otimes E)_x$.  This is because an arbitrary local section can be written as a functional linear combination of the $u_j$ and the mapping $u_x \mapsto \overline{u}_{x}$ is functionally linear.  Now for such $u_j$,
$$
\overline{u}_{j,x}(V) \,\, = \,\, \alpha(\exp_{x}(V), x) 
\mathcal{T}_{x, \exp_{x}(V)}^{-1}\mathcal{T}_{x,\exp_{x}(V)}(u_{j,x})
\,\, = \,\, \alpha(\exp_{x}(V), x)u_{j,x},
$$
so $\theta^{\alpha} (\langle  iX, \xi \rangle^m)(u_j)(x) = 0$, and $\theta^{\alpha} (\langle  iX, \xi\rangle^m)(g_j u_j)(x) = (X^m g_j)(x) u_{j,x}$, where $g_j$ is a smooth  local  function on $M$.  So for arbitrary $u(x')  =  \sum_j g_j(x') u_j(x')$, we have 
$$
\theta^{\alpha} (\langle  iX, \xi\rangle^m)(u)(x) \,\, = \,\,\sum_j  (X^m g_j)(x) u_{j,x}.
$$
But as above  $\nabla^m_X u_j = 0$ on $\exp_{x}(tX)$, so
$$
(\nabla^m_X u)(x) \,\, = \,\,  \sum_j  (X^m g_j)(x) u_j(x)  \,\, = \,\,
\sum_j  (X^m g_j)(x) u_{j,x}  \,\, = \,\, \theta^{\alpha} (\langle iX, \xi\rangle^m)(u)(x).
$$
\end{proof}

The following is  immediate.
\begin{corollary}\label{smdifop}
For any smooth differential operator $D$ acting on $C^{\infty}(\cS_{\nu} \otimes E)$, 
$\theta^{\alpha} (\varsigma(D )) \,\, = \,\,    D.$
\end{corollary}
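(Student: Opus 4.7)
The strategy is to reduce the corollary to the building-block computation already carried out in Lemma \ref{sigexp1}, using linearity of the two maps $\varsigma$ and $\theta^{\alpha}$ together with the pointwise nature of the desired identity. The equality $\theta^{\alpha}(\varsigma(D)) = D$ is an equality of operators on $C^{\infty}(\cS_{\nu} \otimes E)$, so it suffices to check, for every $x \in M$ and every $u \in C^{\infty}(\cS_{\nu} \otimes E)$, that $\theta^{\alpha}(\varsigma(D))(u)(x) = D(u)(x)$.

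My plan is: fix $x \in M$ and invoke the observation made at the start of Section \ref{Sodo} that the restriction of $D$ at $x$ can be written as a finite sum
\[
D|_{x} \,\, = \,\, \sum_{j} g_{j}\, \nabla_{X_{j}}^{m_{j}},
\]
where the $X_{j}$ have constant coefficients in the normal coordinates centred at $x$ and $g_{j} \in C^{\infty}(\wedge \nu^{*} \otimes \End(E))$. This decomposition may well depend on the choice of base point $x$, but that is harmless, because both sides of the claimed identity depend only on local data at $x$: inspection of Definition \ref{SymbOnM} shows that $\varsigma(D)(x,\xi,\sigma)$ depends only on the action of $D$ at $x$, while inspection of Definition \ref{Quantization} shows that $\theta^{\alpha}(p)(u)(x)$ depends only on $p(x,\cdot,\cdot)$ (integration is carried out over the fibers over $x$, and the normal coordinates are those at $x$).

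With the pointwise decomposition in hand, I would use $\C$-linearity of $\varsigma$ and of $\theta^{\alpha}$ to split
\[
\theta^{\alpha}(\varsigma(D))(u)(x) \,\, = \,\, \sum_{j} \theta^{\alpha}\big(\varsigma(g_{j}\, \nabla_{X_{j}}^{m_{j}})\big)(u)(x),
\]
and then apply Lemma \ref{sigexp1} to each summand, which gives $\theta^{\alpha}(\varsigma(g_{j}\, \nabla_{X_{j}}^{m_{j}}))(u)(x) = (g_{j}\, \nabla_{X_{j}}^{m_{j}} u)(x)$. Summing over $j$ and comparing with the pointwise expression for $D(u)(x)$ finishes the argument.

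The proof is really just bookkeeping: the only point that demands any care is justifying that $\varsigma(D)(x,\xi,\sigma)$ truly depends only on the germ (in fact, the jet) of $D$ at $x$, so that the $x$-dependent decomposition of $D|_{x}$ into building blocks is compatible with linearity of $\varsigma$. This is immediate from Definition \ref{SymbOnM}, since $\nabla_{X_{j,x}}^{m}$ at $x$ is determined by the value of $X_{j}$ at $x$ (its coefficients are constant in the normal chart), and the test section $x' \mapsto e^{i\langle \exp^{-1}_{x}(x'),(\xi,\sigma)\rangle}\alpha(x,x')\mathcal{T}_{x,x'}(u_{x})$ is evaluated at $x' = x$. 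No genuine obstacle arises.
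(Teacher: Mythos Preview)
Your proposal is correct and is exactly the argument the paper has in mind: the paper simply declares the corollary ``immediate'' from Lemma \ref{sigexp1}, and you have spelled out the implicit reasoning (pointwise decomposition of $D$ into building blocks $g_j\nabla_{X_j}^{m_j}$, locality of both $\varsigma$ and $\theta^{\alpha}$ at $x$, and linearity). Your care about the $x$-dependence of the decomposition is the one non-trivial bookkeeping point, and you have handled it correctly by noting that both $\varsigma(D)(x,\xi,\sigma)$ and $\theta^{\alpha}(p)(u)(x)$ depend only on data at the fixed base point $x$.
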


\medskip
\begin{definition}
If $X \in C^{\infty}(TM)$, $\xi \in T^*M_x$ and $\sigma \in T^*F_x$, set
$$
\langle  X, (\xi,\sigma) \rangle \,\, = \,\, \langle  X, \sigma \rangle,
$$
if $X_x \in TF_x$, and that is part of the data.  Otherwise, set
$$
\langle  X, (\xi,\sigma) \rangle \,\, = \,\, \langle  X, \xi \rangle.
$$
\end{definition}

\begin{lemma} \label{twovecs}  
Suppose $X, Y \in C^{\infty}(TM)$.  Then
 $$
\varsigma(\nabla_X\nabla_Y)(x,\xi,\sigma) \,\, = \,\, \langle  iX, (\xi,\sigma) \rangle \langle   iY,(\xi,\sigma) \rangle  + \frac{1}{4}\Omega_{\nu} (X,Y)   + iX_{x}(\nabla_Y\langle  \exp^{-1}_{x}(x'),(\xi,\sigma) \rangle) +  \frac{1}{2}\Omega_E (X,Y),
$$
where  $\Omega_{\nu}$ is the curvature of $\nabla^{\nu}$ on $\nu^*$ and $\Omega_E$ is the curvature of $\nabla^E$ on $E$.   
\end{lemma}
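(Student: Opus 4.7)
\textbf{Proof plan for Lemma \ref{twovecs}.}

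The plan is to apply Definition \ref{SymbOnM} directly: set $g(x') = e^{i\langle \exp^{-1}_x(x'),(\xi,\sigma)\rangle}$ and $\tau(x') = \mathcal{T}_{x,x'}(u_x)$, note that $\alpha\equiv 1$ in a neighborhood of $x$ so we may drop it, and expand $\nabla_X\nabla_Y(g\tau)$ by the Leibniz rule. This gives
\[
\nabla_X\nabla_Y(g\tau) \;=\; X(Y(g))\,\tau \;+\; Y(g)\,\nabla_X\tau \;+\; X(g)\,\nabla_Y\tau \;+\; g\,\nabla_X\nabla_Y\tau,
\]
and the proof reduces to evaluating each piece at $x'=x$.

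The first step is the ``parallel transport vanishes'' observation: since $\tau$ is, by construction, parallel along every radial geodesic through $x$, we have $\nabla_V\tau|_x = 0$ for every $V\in TM_x$. This kills the two middle terms and leaves only $X(Y(g))|_x u_x + \nabla_X\nabla_Y\tau|_x$ (using $g(x)=1$). Next, I would compute $X(Y(g))|_x$ using the chain rule and the fact that, in normal coordinates at $x$, $\phi(x') := \langle \exp^{-1}_x(x'),(\xi,\sigma)\rangle$ is a \emph{linear} function of the coordinates; a direct computation then gives
\[
X(Y(g))|_x \;=\; -\langle X_x,(\xi,\sigma)\rangle\langle Y_x,(\xi,\sigma)\rangle \;+\; iX_x(\nabla_Y\phi),
\]
which is precisely the first and third terms of the claimed formula (up to writing $-\langle X,(\xi,\sigma)\rangle\langle Y,(\xi,\sigma)\rangle = \langle iX,(\xi,\sigma)\rangle\langle iY,(\xi,\sigma)\rangle$). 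The case distinction between the ``leafwise'' and ``global'' vector fields enters only through Definition \ref{derivatives}, and the computation is formally identical in both cases.

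The main obstacle is the curvature term $\nabla_X\nabla_Y\tau|_x$. Here I would invoke Proposition 3.7 of \cite{ABP} (the ``radial gauge'' identity): since $\tau$ is radially parallel, the connection one-form of $\nabla$ in the parallel frame at $x$ satisfies $\sum_k x_k\Gamma^b_{ka}(x')\equiv 0$, which forces $\Gamma^b_{ja}(x)=0$ and $\partial_j\Gamma^b_{\ell a}(x) + \partial_\ell\Gamma^b_{ja}(x) = 0$. Combining this symmetric identity with the antisymmetric identity coming from the definition of curvature (and using $\nabla_{[X,Y]}\tau|_x = 0$) gives
\[
\nabla_{\partial_j}\nabla_{\partial_k}\tau|_x \;=\; \tfrac{1}{2}\,\Omega^{\cS_\nu\otimes E}(\partial_j,\partial_k)u_x,
\]
and then for general vector fields $X,Y$ one expands in the coordinate basis; the derivatives $X(Y^j)$ that appear in Leibniz multiply $\sigma_j|_x=\nabla_{\partial_j}\tau|_x=0$ and so drop out, yielding
\[
\nabla_X\nabla_Y\tau|_x \;=\; \tfrac{1}{2}\,\Omega^{\cS_\nu\otimes E}(X_x,Y_x)u_x.
\]

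Finally, using that the tensor-product curvature splits as $\Omega^{\cS_\nu\otimes E} = \Omega^{\cS_\nu}\otimes\Id + \Id\otimes\Omega^E$, and the standard identification (via Clifford multiplication, as recalled in the paragraph preceding \eqref{wedge}) $\Omega^{\cS_\nu}(X,Y) = \tfrac{1}{2}\Omega_\nu(X,Y)$, we obtain $\tfrac{1}{2}\Omega^{\cS_\nu\otimes E}(X,Y) = \tfrac{1}{4}\Omega_\nu(X,Y) + \tfrac{1}{2}\Omega_E(X,Y)$, which matches the last two terms of the lemma. The hard part is really the symmetric-part vanishing in step concerning $\nabla_X\nabla_Y\tau|_x$; once the ABP radial-gauge calculus is in hand, everything else is Leibniz and the linearity of $\phi$ in normal coordinates.
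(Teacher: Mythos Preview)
Your proposal is correct and follows essentially the same approach as the paper: both expand $\nabla_X\nabla_Y(e^{i\phi}\mathcal{T}_{x,x'}(u_x))$ by Leibniz, use that first covariant derivatives of the parallel transport vanish at $x$, invoke Proposition~3.7 of \cite{ABP} to identify $\nabla_X\nabla_Y\mathcal{T}_{x,x'}(u_x)|_{x'=x}=\tfrac{1}{2}\Omega(X,Y)u_x$, and then split $\Omega=\tfrac{1}{2}\Omega_\nu+\Omega_E$. Your write-up is in fact a bit more explicit than the paper's about why the ABP radial-gauge identity yields the $\tfrac{1}{2}\Omega$ factor, but the structure and key inputs are identical.
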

The first term has grading $2$, as does the second term, see \cite{Getz}, Example 2.3 b), or \cite{BF}, Example 2.  The third term has grading $1$, and the last has grading $0$.  

\medskip
\begin{proof}
We do only the case involving $\xi$ and leave the other cases to the reader.  So, let  $x,x' \in M$ with $x$ fixed, $X, Y \in C^{\infty}(TM)$, $\xi \in T^*M_x$ and $u_x \in (\cS_{\nu} \otimes E)_x$.   Since the operator is local we may, as above, ignore the term $\alpha(x, x')$.  Denote the curvature of $\nabla$ by $\Omega$.  Then 
$$
\nabla_X\nabla_Y(e^{i\langle  \exp^{-1}_{x}(x'),\xi \rangle} \mathcal{T}_{x,x'}(u_{x})) \, |_{x' = x} \,\, = \,\,
$$
$$
\nabla_X\Big[  i(\nabla_Y\langle  \exp^{-1}_{x}(x'),\xi \rangle)  e^{i\langle  \exp^{-1}_{x}(x'),\xi \rangle}  \mathcal{T}_{x,x'}(u_{x})   +      e^{i\langle  \exp^{-1}_{x}(x'),\xi \rangle}  \nabla_Y \mathcal{T}_{x,x'}(u_{x})\Big]  \, |_{x' = x}  \,\, = \,\,
$$
$$
 i(\nabla_X\nabla_Y\langle  \exp^{-1}_{x}(x'),\xi \rangle)  u_{x}
 +\langle  iY,\xi \rangle  \langle  iX,\xi\rangle u_{x}   +      \langle  iX,\xi\rangle \nabla_Y \mathcal{T}_{x,x'}(u_{x}) +
\nabla_X\nabla_Y \mathcal{T}_{x,x'}(u_{x})  \, |_{x' = x}  \,\, = \,\,
$$
$$
\Big[ iX_x (\nabla_Y\langle  \exp^{-1}_{x}(x'),\xi \rangle)   +  \langle iX,\xi\rangle\langle  iY,\xi\rangle  +  \frac{1}{2}\Omega(X,Y)  \Big] u_{x}.
$$
The fact that $ \nabla_X\nabla_Y \mathcal{T}_{x,x'}(u_{x})  \, |_{x' = x} =  \frac{1}{2}\Omega(X,Y)  u_{x}$ follows from Proposition 3.7 of \cite{ABP} (see Proposition \ref{ABPprop} below).  To finish, we have  (see \cite{LawsonM}, Theorem 4.15),
$$
\Omega(X,Y)  \,\, = \,\,\Omega_{\cS}(X,Y) +\Omega_E(X,Y)
\,\, = \,\, \frac{1}{2}\Omega_{\nu}(X,Y) + \Omega_E(X,Y),
$$
where $\Omega_{\cS}$ is the curvature of $\nabla^{\cS}$ on $\cS_{\nu}$.
\end{proof} 

To finish this section, we consider the transverse  Dirac operator, as well as its square. This is an important example  for the local index theorem treated in \cite{BH2012b}, but it also has an obvious independent interest so we include it here.  Recall that the transverse Dirac  operator $D$ with coefficients in $E$ is given as follows.  Choose a local orthonormal basis $f_1,...,f_q$   of $\nu^*$ and denote by $e_1,...,e_q$ the dual  orthonormal basis of $\nu$ on an open set $U$ of $M$.  Given any element $u \in C^\infty(\cS_{\nu} \otimes E)$, then on $U$ set
$$
\what{D}(u) \,\, = \,\, \sum_{1\leq i\leq q} f_i \cdot \nabla_{e_i}u,
$$
where $f_i \cdot$  is the operator $c(f_i)$, Clifford multiplication by $f_i$.  It is immediate from the definition that this is independent of the basis used,  and it is an easy calculation to show that  the bases need not be orthonormal.

Note that $\what{D}$ is not self adjoint in general.   To correct for this we need to add the Clifford multiplication operator $-c(\mu)/2$, where  $\mu \in \nu$ is  the mean curvature vector field of $F$, that is $\mu=p_\nu (\sum_{1\leq i\leq p} \nabla^{LC}_{X_i} X_i)$, where $p_\nu:TM \to \nu$ is the projection,  $\nabla^{LC}$ is  the Levi-Civita connection on $M$, and $X_1, X_2, . . . , X_p$ is a local orthonormal frame for $TF$.  See \cite{K07,GlK91}.  In particular, the proof in \cite{K07} that perturbation by $\mu/2$ yields a self-adjoint operator does not depend on the foliation being Riemannian.  Using the classic equation defining $\nabla^{LC}$, we may also write $\mu= \sum_{j=1}^p  \sum_{i=1}^q \langle  [e_i,X_j],X_j\rangle e_i.$  Then set
$$
c(\mu) \,\, = \,\,  \sum_{j=1}^p  \sum_{i=1}^q \langle  [e_i,X_j],X_j \rangle f_i.
$$    
The transverse Dirac operator $D$ of $F$ is
$$
 D \,\, = \,\,  \what{D} \,\, - \,\, \frac{1}{2}c(\mu).
 $$

\begin{example}\label{Dtype}
It is easy to see  that
$$
\varsigma(D)(x,\xi,\sigma) \,\, = \,\,  
\varsigma(\sum_{j=1}^q f_j \cdot \nabla_{e_j})(x,\xi,\sigma)  \,\, - \,\, \frac{1}{2} c(\mu)  \,\, = \,\, 
i \sum_{j=1}^q f_j \otimes \langle  e_j, \xi \rangle   \,\, - \,\, \frac{1}{2} c(\mu) \,\, = \,\,  
i \sum_{j=1}^q  f_j   \otimes \langle  e_j, \eta \rangle    \,\, - \,\, \frac{1}{2} c(\mu),
$$
where $\eta$ is the projection of $\xi$ to $\nu^*$.
\end{example}
The first term has grading $2$, and the second grading $1$.  Note that while $D$ is a differential operator of order one, its symbol contains elements of grading two.  Note also that $D$ does not contain any differential operators defined using vectors in $TF$ where that is part of the data.

\begin{definition}
Choose a framing $e_1,...,e_q$ of $\nu_x$, with dual framing $f_1,...,f_q$.  (These framings are not assumed to be orthonormal.)  Extend them to local framings which are  parallel (using $\nabla^{\nu}$) along geodesics  through $x$.  Then the non-integrability tensor $\vartheta_{\nu}$  is the smooth global section of $\wedge^2\nu^* \otimes TM$ which is  given at $x$ by:
$$
\vartheta_{\nu,x} \,\, = \,\, \sum_{j<k}( f_j \wedge f_k \otimes [e_j,e_k])_{x}.
$$
\end{definition}
We leave it to the reader to show that this does not depend on the choice of framing.  Note that $\vartheta_{\nu}$ does depend on the choice of normal bundle $\nu$.   

\begin{remark}
The tensor $\vartheta_{\nu}$ is smooth because the solutions to the differential equations used in defining the $e_i$ are smooth as functions of their initial data, and the $e_i(x)$ can be chosen to vary smoothly in $x$, so their parallel translates along geodesics vary smoothly in all variables.
\end{remark}

\begin{proposition}\label{dsquared}
The symbol of $D^2$ is given by 
$$
\varsigma(D^2)(x,\xi,\sigma) \,\, = \,\,    |\eta|^2     \, - \,  i\sum_j e_{j,x} e_j\langle  \exp^{-1}_{x}(x'),\xi \rangle   \,  +  \,
\frac{1}{2}\sum_{j<k} f_j \wedge f_k   \wedge  \Omega_{\nu} (e_j,e_k)   \,  +  \,
$$
$$
\sum_{j<k} f_j \wedge f_k   \otimes \Omega_E(e_j,e_k)  \,\, + \,\,
 i\langle  \vartheta_{\nu}, \xi \rangle   \,\, + \,\,  \frac{1}{2}\sum  f_i \cdot f_k  \otimes e_k(\langle  [e_i,X_j],X_j\rangle)\,\, + \,\,   \langle i \mu ,  \eta \rangle  \,\, - \,\,  \frac{1}{4} |\mu|^2,      
$$
where $\xi= (\eta,\zeta)$.
\end{proposition}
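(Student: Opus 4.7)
The plan is to expand
\[
D^2 \;=\; \widehat{D}^2 \,-\, \tfrac{1}{2}\bigl(\widehat{D}\,c(\mu)+c(\mu)\,\widehat{D}\bigr) \,+\, \tfrac{1}{4}\,c(\mu)^2,
\]
and match each summand of $\varsigma(D^2)$ against a summand on the right of the claimed formula. Throughout I work with a local orthonormal framing $\{e_i\}$ of $\nu$ (with dual $\{f_i\}$ of $\nu^*$) parallel along geodesics through $x$, so that $\nabla^{\nu}_{e_j}f_k\vert_x=0$, exactly as in the proof of Lemma~\ref{twovecs}. The key tools will be Lemma~\ref{twovecs} (the symbol of a product of two covariant derivatives), the compatibility $[\nabla_{e_j},c(\cdot)]=c(\nabla^{\nu}_{e_j}\cdot)$ of the spin connection with Clifford multiplication, and the Clifford relations $\{f_i,f_j\}=-2\delta_{ij}$.

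The scalar piece is immediate: $c(\mu)^2=-|\mu|^2$ contributes $-\tfrac14|\mu|^2$. For the cross term, the Leibniz rule together with $\{f_j,c(\mu)\}=-2\mu_j$ (deduced from $c(\mu)=\sum_i\mu_i f_i$) gives
\[
\widehat{D}\,c(\mu)+c(\mu)\,\widehat{D} \;=\; \sum_j f_j\cdot c(\nabla^{\nu}_{e_j}\mu) \;-\; 2\nabla_{\mu}.
\]
Multiplying by $-\tfrac12$, the first-order part $\nabla_{\mu}$ has symbol $\langle i\mu,\eta\rangle$ by Lemma~\ref{sigexp1}, while the zeroth-order part, using $\nabla^{\nu}_{e_j}\mu\vert_x=\sum_i e_j(\mu_i)e_i$ in the parallel framing, equals $-\tfrac12\sum_{i,j}e_j(\mu_i)\,f_j\cdot f_i$, which after the Clifford swap $f_j\cdot f_i=-f_i\cdot f_j$ (for $i\neq j$) matches $\tfrac12\sum f_i\cdot f_k\otimes e_k(\langle[e_i,X_j],X_j\rangle)$.

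The heart of the argument is the computation of $\varsigma(\widehat{D}^2)$. Because $\nabla^{\nu}_{e_j}f_k\vert_x=0$, the Leibniz rule gives $\nabla_{e_j}(f_k\cdot s)\vert_x=f_k\cdot\nabla_{e_j}s\vert_x$ and hence $\widehat{D}^2\vert_x = \sum_{j,k} f_j\cdot f_k\cdot\nabla_{e_j}\nabla_{e_k}\vert_x$. Apply Lemma~\ref{twovecs} with $X=e_j$, $Y=e_k$ (not declared leafwise, so computed with $\xi$), and multiply each of the four resulting pieces by $f_j\cdot f_k$ before summing. The principal piece collapses to $|\eta|^2$ via $\{f_j,f_k\}=-2\delta_{jk}$ and $\langle e_j,\xi\rangle=\langle e_j,\eta\rangle$. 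The transport piece splits via $f_j\cdot f_k=-\delta_{jk}+f_j\wedge f_k$ into the diagonal part $-i\sum_j e_{j,x}\,e_j\langle\exp^{-1}_x(x'),\xi\rangle$, and an off-diagonal antisymmetric part which, using $e_{j,x}(e_k g)-e_{k,x}(e_j g)=[e_j,e_k]_x(g)$ applied to the linear function $g=\langle\exp^{-1}_x(\cdot),\xi\rangle$, equals $i\sum_{j<k}f_j\wedge f_k\,\langle[e_j,e_k]_x,\xi\rangle=i\langle\vartheta_\nu,\xi\rangle$. The $\Omega_E$ piece yields $\sum_{j<k}f_j\wedge f_k\otimes\Omega_E(e_j,e_k)$ directly from the antisymmetry in $(j,k)$. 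Finally, the $\Omega_\nu$ piece reduces, after the same antisymmetrization and the identification $\End(\cS_\nu)\cong\wedge^*\nu^*$, to $\tfrac12\sum_{j<k}f_j\wedge f_k\wedge\Omega_\nu(e_j,e_k)$.

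I expect the main obstacle to be the last item, the $\Omega_\nu$ term: one must verify that in $\sum_{j,k}f_j\cdot f_k\cdot\tfrac14\Omega_\nu(e_j,e_k)$ all the extra Clifford contractions (coming both from $f_j\cdot f_k\neq f_j\wedge f_k$ and from the spin lift of $\Omega_\nu$ via equation~(\ref{wedge})) cancel pairwise, using both the $(j,k)$-antisymmetry of the curvature and its internal $\mathfrak{so}(\nu^*)$-antisymmetry, so that only the top-degree $4$-form survives. Once this cancellation is in place, the bookkeeping for $\vartheta_\nu$ and for the $c(\mu)$ cross term is routine, and assembling all contributions yields the stated formula.
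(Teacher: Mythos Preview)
Your proposal is correct and follows essentially the same route as the paper: expand $D^2=\widehat{D}^2-\tfrac12(\widehat{D}c(\mu)+c(\mu)\widehat{D})+\tfrac14 c(\mu)^2$, use the anticommutator identity $\widehat{D}c(\mu)+c(\mu)\widehat{D}=\widehat{D}(c(\mu))-2\nabla_\mu$, and reduce $\widehat{D}^2$ at $x$ to $\sum_{j,k}f_j\cdot f_k\cdot\nabla_{e_j}\nabla_{e_k}$ via the parallel framing, then invoke Lemma~\ref{twovecs}. The only organizational difference is that the paper first separates the diagonal $j=k$ from the antisymmetrized off-diagonal $j<k$ part before applying Lemma~\ref{twovecs}, whereas you apply the lemma termwise and then split $f_j\cdot f_k=-\delta_{jk}+f_j\wedge f_k$; these are the same computation in a different order.

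One remark on your anticipated obstacle: you do not actually need the Clifford contractions between $f_j\wedge f_k$ and $\Omega_\nu(e_j,e_k)$ to cancel. Since $\Omega_\nu(e_j,e_j)=0$, only $j\neq k$ contributes, and for orthonormal $f_j,f_k$ one has $f_j\cdot f_k=f_j\wedge f_k$ exactly. The paper stops there and records the result as $\tfrac12\sum_{j<k}f_j\wedge f_k\wedge\Omega_\nu(e_j,e_k)$; the subsequent grading discussion (``at most grading $4$'', and grading $0$ in the Riemannian case via Remark~\ref{Riemannian}) indicates that this expression is to be read as the Clifford composite of $f_j\wedge f_k$ with the $2$-form $\Omega_\nu(e_j,e_k)$, not as a pure $4$-form. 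So no further cancellation argument is needed, and your proof goes through as written.
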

The first and fourth terms have grading $2$, the second and seventh at most $1$, and the fifth at most $3$.  and the eighth $0$.  The third term in general will have at most grading $4$.  For Riemannian foliations however, it has grading $0$, see Remark \ref{Riemannian} below.  The sixth term has terms of grading $2$ (those where $k \neq i$), and terms of grading $0$  (those where $k=i$, since we are using Clifford multiplication so $f_i \cdot f_i = -1$ in this expression).  
{{Finally, the eighth term has grading $0$, since if $c(\mu) = \sum a_i f_i$, then  $c(\mu)^2  = -\sum a_i^2 = -|\mu|^2$ is a scalar.}}
\begin{proof}
We may assume that our  dual orthonormal bases, $f_1,...,f_q$ of 
$\nu^*$ and $e_1,...,e_q$ of $\nu$,  are  parallel  (using $\nabla^{\nu}$) along geodesics  through $x$. Then 
$$
\varsigma(D^2) \,\, = \,\,  \varsigma([ \what{D} - \frac{1}{2} c(\mu)]^2)   \,\, = \,\,  
\varsigma( \what{D}^2)  \,\, - \,\,  \frac{1}{2} \varsigma( \what{D} c(\mu))     \,\, - \,\, 
\frac{1}{2} \varsigma( c(\mu) \what{D})   \,\, + \,\,   \varsigma(\frac{1}{4} c(\mu)^2).
$$
A simple calculation shows that $\what{D} c(\mu) + c(\mu) \what{D} = \what{D} (c(\mu)) -2\nabla_{\mu}$, and 
$$
\varsigma(\what{D}(c(\mu))) \,\, =  \,\,
-\sum f_i \wedge f_k \otimes e_k(\langle  [e_i,X_j],X_j\rangle).
$$
Thus we get 
$$  
\varsigma(D^2)(x,\xi,\sigma)      \,\, = \,\,
\varsigma( \what{D}^2)(x,\xi,\sigma)   \,\, + \,\,  \frac{1}{2}\sum  f_i \wedge f_k \otimes e_k(\langle  [e_i,X_j],X_j\rangle) \,\, + \,\,   \langle  i\mu ,  \eta \rangle     \,\, - \,\,    \frac{1}{4} |\mu|^2.
$$
As $\mu \in \nu$, and $D$ does not contain any differential operators defined by vectors in $TF$ where that is part of the data,  $\sigma$ plays no role here, and we need only compute $\varsigma( \what{D}^2)(x,\xi,\sigma)$.  As $(\nabla_{e_j} f_k)_{x} = 0$, we have   
$$
\varsigma(\what{D}^2)(x, \xi,\sigma) \,\, =  \,\,  
\varsigma\Big(\sum_{j,k} f_j \cdot \nabla_{e_j} f_k \cdot \nabla_{e_k}\Big)(x,\xi)  \,\, =  \,\,  
\varsigma\Big(\sum_{j,k} f_j   \cdot f_k \cdot \nabla_{e_j} \nabla_{e_k}\Big)(x,\xi)
\,\, = \,\,
$$
$$ - \sum_{j}   \varsigma(\nabla_{e_j}  \nabla_{e_j})(x, \xi)  \,\, + \,\, 
 \sum_{j<k}  f_j \wedge  f_k \wedge \varsigma(\nabla_{e_j}  \nabla_{e_k} - \nabla_{e_k}  \nabla_{e_j}) (x, \xi)  \,\, = \,\,
$$
$$
|\eta|^2  \,\, - \,\,  i\sum_j e_{j,x} e_j\langle  \exp^{-1}_{x}(x'),\xi \rangle  \,\, +  \,\, 
\sum_{j<k}  f_j \wedge f_k \wedge\frac{1}{2}\Omega_{\nu} (e_j,e_k)   \,\, +  \,\, 
$$
$$
\sum_{j<k}  f_j \wedge f_k \otimes \Omega_E (e_j,e_k)    \,\, +  \,\,     
\sum_{j<k}  f_j \wedge f_k \otimes  i \langle  [e_j,e_k],\xi \rangle.
$$
\end{proof}
 
\begin{remark}\label{Riemannian}
The term  $\sum_{j<k} f_j \wedge f_k   \wedge  \Omega (e_j,e_k)$ has order zero when $F$ is Riemannian, since in that case, it is  locally the pull-back of the same expression on any transversal $W$.   But it is classical, see \cite{LawsonM}, p.\ 161, that on $W$ this expression is just $\frac{1}{4} \kappa$, where $\kappa$ is the scalar curvature of $W$.  
\end{remark}
 
The example treated in Proposition \ref{dsquared}  accords well with the case of a foliation by points, the case in \cite{Getz}, where the symbol of the square of the Dirac operator is given as $\varsigma(D^2)(x,\xi) \,\, = \,\,   - |\xi|^2 +  \frac{1}{2}\Omega_E  +  $     lower graded terms.
The minus sign occurs because Getzler uses the convention $f \cdot f = \, \langle  f,f \rangle$ rather than$f \cdot f = - \langle  f,f\rangle$ which we use.  In this case, the transverse operator $\what{D}$ is self adjoint, so the terms involving $\mu$ disappear.  The tangent bundle $TF$ is the zero bundle, so the term involving $\vartheta_{\nu}$ also disappears.   

\section{Composition of polynomial symbols} \label{polythm}

In this section, we concentrate  on polynomial symbols,  that is, symbols associated to differential operators, and an important result for us is Proposition 3.7 of \cite{ABP}.  The set up there is the following.  Let $x= (x_1,...,x_n)$ be normal coordinates at the point $x$ and $u_1,u_2, ...$ a local framing of $\cS_{\nu} \otimes E$ obtained by parallel translating a framing at $x$ along the geodesics through $x$.  Then with respect to this data, the local connection and curvature forms are defined by the equations
$$
\nabla u_i \,\, = \,\,  \sum_{jk} \Gamma^i_{jk} dx_k \otimes u_j\; \text{ and }\;
\Omega  u_i \,\, = \,\,  \sum_{jkl}K^i_{jkl} dx_k \wedge dx_l \otimes u_j.
$$
The $\Gamma^i_{jk}$ and $K^i_{jkl}$ are smooth locally defined functions on $M$, which are related as follows.

\begin{proposition}[\cite{ABP}, Proposition 3.7]\label{ABPprop} 
Write $\what{\Gamma}$ and $\what{K}$ for the formal Taylor series at $x$ for the function indicated, and $\what{\Gamma}[n]$ and $\what{K}[n]$ for the term of homogeneity $n$ in this expansion.  Then
$$
(n+1) \what{\Gamma}^i_{jk}[n]  \,\, = \,\,  \sum_{l}2 x_l  \what{K}^i_{jlk}[n-1].
$$
In particular, the Taylor series for $\Gamma^i_{jk}$ at $x$ is given by 
$$
\what{\Gamma}^i_{jk} \,\, = \,\,  x_l K^i_{jlk}(x)  \,\, + \,\, 
a_m x_l   x_m\frac{\pa K^i_{jlk}}{\pa x_m}(x) \,\, + \,\, 
a_{mn}  x_l   x_m x_n \frac{\pa^2 K^i_{jlk}}{ \pa x_m \pa x_n} (x)\,\, + \,\,  \cdots
$$
where $a_{\bullet} \in \Q$, and the expression on the right is summed over repeated indices.
\end{proposition}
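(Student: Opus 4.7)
The plan is to exploit the defining property of the framing $u_1, u_2, \ldots$, namely that it is radially parallel from $x$, together with Cartan's magic formula. Let $R = \sum_l x_l\,\pa/\pa x_l$ denote the radial (Euler) vector field on the normal neighborhood and let $A$ denote the matrix of connection one-forms, $A^i_j = \sum_k \Gamma^i_{jk}\, dx_k$. Since each $u_i$ is parallel translated along the radial geodesics $t \mapsto tv$ through $x$, we have $\nabla_R u_i = 0$ at every point of the neighborhood, which is equivalent to the pointwise identity
$$
\iota_R A \,\, = \,\, 0.
$$

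Starting from this, I would apply Cartan's formula $\mathcal{L}_R A = d(\iota_R A) + \iota_R (dA) = \iota_R (dA)$ and use the structure equation $\Omega = dA + A \wedge A$. Because $\iota_R A = 0$, we have $\iota_R(A \wedge A) = (\iota_R A)\wedge A - A \wedge (\iota_R A) = 0$, so $\mathcal{L}_R A = \iota_R \Omega$. Unpacking componentwise, using $\mathcal{L}_R(dx_k) = dx_k$ and $\iota_R(dx_l \wedge dx_k) = x_l\, dx_k - x_k\, dx_l$ together with the antisymmetry $K^i_{jlk} = -K^i_{jkl}$, this should collapse to the scalar identity
$$
R(\Gamma^i_{jk}) \,\, + \,\, \Gamma^i_{jk} \,\, = \,\, 2\sum_l x_l\, K^i_{jlk}.
$$

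Matching Taylor components at $x$ via Euler's homogeneity relation $R(\what{\Gamma}^i_{jk}[n]) = n\,\what{\Gamma}^i_{jk}[n]$ then yields the stated recursion $(n+1)\,\what{\Gamma}^i_{jk}[n] = 2\sum_l x_l\, \what{K}^i_{jlk}[n-1]$. The displayed Taylor expansion for $\Gamma^i_{jk}$ follows by iterating this recursion, substituting the Taylor series of $K^i_{jlk}$ at $x$ at each stage, with the rational coefficients $a_\bullet$ produced by the accumulated factors $2/(n+1)$. The only points needing care are verifying that the identity $\iota_R A = 0$ genuinely holds throughout the normal neighborhood and not just at the origin (which is precisely the content of the radially parallel construction of the $u_i$, via $\nabla_{\dot\gamma(t)}u_i = 0$ along each radial geodesic $\gamma(t) = tv$), and tracking the antisymmetry conventions for $K$. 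I do not foresee a substantive obstacle beyond this routine bookkeeping.
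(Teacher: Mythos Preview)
Your argument is correct. The paper does not supply its own proof of this proposition; it is quoted directly from \cite{ABP}, and your derivation via $\iota_R A = 0$, Cartan's formula, and the structure equation is essentially the original Atiyah--Bott--Patodi argument. The only convention to watch is that the paper writes $\Omega u_i = \sum_{jkl} K^i_{jkl}\, dx_k \wedge dx_l \otimes u_j$ with the sum over all $k,l$, so that $K^i_{jkl}$ is antisymmetric in its last two indices; your use of $K^i_{jlk} = -K^i_{jkl}$ is consistent with this.
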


As corollaries of  this fundamental proposition, we quote  the following facts for later use. Their proofs are straightforward and are omitted.
\medskip
\begin{enumerate}

\item All the terms in $\what{\Gamma}^i_{jk} $ have grading at most $2$, and those of grading $2$ are given by the $(K_{\nu})^i_{jlk}$ and their derivatives, where   $K_{\nu}$  is defined by $\Omega_{\nu}  u_i \,\, = \,\,  \sum_{jkl}(K_{\nu})^i_{jkl} dx_k \wedge dx_l \otimes u_j.$

\medskip
\item  Suppose that  $X \in C^{\infty}(TM)$.  Then $\nabla_{X} u_i (x)=0$ and we have more explicitely
$$
\nabla_{X} u_i \,\, = \,\,  \theta^i_j(X) u_j   \,\, = \,\, \Gamma^i_{jk}dx_k(X) u_j \,\, = \,\,
\Big(x^{\ell} K^i_{jlk}(x) + a_m  x_l   x_m\frac{\pa K^i_{jlk}}{\pa x_m}(x) + \cdots \Big) dx_k(X) u_j.
$$

\item   Suppose $X,Y \in C^{\infty}(TM)$.  Then
$$
\nabla_Y\nabla_X u_i (x)\,\, = \,\,   \Big(K^i_{jlk} dx_l(Y)dx_k(X) u_j\Big)  (x)  \,\, = \,\,  \frac{1}{2} (\Omega(X,Y) u_i) (x).
$$

\item   
$\Omega(X,Y) \,\, = \,\,\frac{1}{2}\Omega_{\nu} (X,Y) \,\, + \,\,  \Omega_E(X,Y)$,  and as operators the first has grading $2$, while the second has grading zero.

\item 
If $Z \in C^{\infty}(TM)$, then $\nabla_{Z}\nabla_{Y}\nabla_{X} u_i(x)$ has the term $\dd a_m  dx_l (Z)  dx_m(Y)dx_k(X)\frac{\pa K^i_{jlk}}{\pa x_m}(x) u_j$, which is not necessarily zero.  Thus $\varsigma(\nabla_{Z}\nabla_{Y}\nabla_{X})(x,\xi,\sigma)$ has a multiple of the term $dx_m(Y)\pa \Omega(Z,X)/\pa x_m$, which has grading at most two.  However, the terms of highest grading in $\varsigma(\nabla_{Z}\nabla_{Y}\nabla_{X})$ will have grading $3$, e.\ g.\ $\langle i Z, \xi \rangle \langle i Y, \xi \rangle \langle i X, \xi \rangle$.  Similar remarks apply to higher compositions of covariant derivatives.  Thus, our calculations below of the highest graded terms of the symbols of compositions of covariant derivatives will contain no derivatives of the $K^i_{jlk}$, and we may assume that $\what{\Gamma}^i_{jk}  =  x_l (K_{\nu})^i_{jlk}(x)$.

\end{enumerate}

\begin{remark}\label{threefacts}
It is clear that the following three facts hold:
\begin{enumerate}

\item
The highest graded terms of $\varsigma(\nabla^m_{X} \nabla^{\ell}_Y )(x, \xi,\sigma)$ have grading $m + \ell$.  

\item
The highest graded term of $(X^{m - k}Y^{\ell - k'}e^{i\langle  \exp^{-1}_{x}(x'),(\xi,\sigma) \rangle})(x) $ is $\langle i X,  (\xi,\sigma)\rangle^{m-k}  \langle  iY, (\xi,\sigma)\rangle^{\ell-k'} $.  

\item
The highest graded term of $(\nabla^k_X\nabla^{k}_Y(u_j))(x)$ is  $4^{-k} \Omega_{\nu}(X,Y)^k $. 
\end{enumerate}
\end{remark}

For non-negative integers $k \leq m,  k' \leq \ell$,  set
$$
\Big(\hspace{-0.1cm}\begin{array}{c} m,  \ell \\ k,  k' \end{array}\hspace{-0.1cm}\Big) \,\, = \,\,
\Big(\hspace{-0.1cm}\begin{array}{c} m \\k\end{array}\hspace{-0.1cm}\Big) 
\Big(\hspace{-0.1cm}\begin{array}{c} \ell \\k'\end{array}\hspace{-0.1cm}\Big)\,\, = \,\, 
 \frac{m!\ell!}{k !(m-k)!{k'}!(\ell - k')!}.
$$

\begin{lemma}\label{sigexp1.5}  Suppose $X, Y \in C^{\infty}(TM)$ are given in normal coordinates at $x$ by $X = \sum_{i=1}^n c_i \pa/\pa x_i$ and  $Y= \sum_{i=q+1}^n d_i \pa/\pa x_i$ where the $c_i$ and $d_i$ are constants.  So $Y(x)  \in TF_x$, and that is part of the data.  Then
$$
\varsigma(\nabla^m_{X} \nabla^{\ell}_Y )(x, \xi,\sigma)\,\, = \,\,
$$
$$
\sum_{k=0}^{\min(m,\ell)} 
\hspace{-0.2cm}  
4^{-k} k!\Big(\hspace{-0.1cm}\begin{array}{c} m,\ell \\k,k\end{array}\hspace{-0.1cm}\Big)
\langle  iX, \xi \rangle^{m-k}  \langle  iY, \sigma\rangle^{\ell-k} 
\Omega_{\nu}(X,Y)^k \,\, + \,\,
c_{m,\ell}(X,\xi,Y, \sigma),
$$
where $c_{m,\ell}(X,\xi,Y, \sigma)$ is polynomial in $\xi$ and $\sigma$ and has grading less than $m + \ell$.
\end{lemma}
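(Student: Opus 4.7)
The plan is to apply Definition~\ref{SymbOnM}, drop the cutoff $\alpha$ (which equals $1$ near the diagonal and so does not affect the local computation), and expand $\nabla_X^m\nabla_Y^\ell$ acting on the product $f(x')h(x')$, where $f(x')=e^{i\langle\exp^{-1}_x(x'),(\xi,\sigma)\rangle}$ is scalar-valued and $h(x')=\mathcal{T}_{x,x'}(u_x)$ is section-valued. Since $X$ and $Y$ are constant-coefficient vector fields in normal coordinates at $x$, they commute as differentiations on scalars, and the Leibniz rule for the connection gives
$$
\big[\nabla_X^m\nabla_Y^\ell(fh)\big]_{x'=x} \,\, = \,\, \sum_{k=0}^m\sum_{k'=0}^\ell \Big(\hspace{-0.1cm}\begin{array}{c}m,\ell\\k,k'\end{array}\hspace{-0.1cm}\Big) (X^{m-k}Y^{\ell-k'}f)(x)\cdot(\nabla_X^k\nabla_Y^{k'}h)(x).
$$

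Next I would grade each summand. By Remark~\ref{threefacts}(2), the top-graded piece of $(X^{m-k}Y^{\ell-k'}f)(x)$ is $\langle iX,\xi\rangle^{m-k}\langle iY,\sigma\rangle^{\ell-k'}$, of grading $m+\ell-k-k'$; the $\xi$ versus $\sigma$ split reflects the convention that $Y_x\in TF_x$ is part of the data while $X_x$ is unrestricted. For the section-valued factor, expand $h=\sum_j c_j u_j$ in the parallel framing $\{u_j\}$ with $c_j$ constant, and estimate $(\nabla_X^k\nabla_Y^{k'}u_j)(x)$ using Proposition~\ref{ABPprop} together with items (1)--(5) listed after it. Because $u_j$ is parallel along every geodesic through $x$, both $\nabla_X^a u_j(x)$ and $\nabla_Y^b u_j(x)$ vanish for $a,b\ge 1$, so the only way to produce a nonzero contribution at $x$ is to ``pair'' an $X$-derivative with a preceding $Y$-derivative; each such pair yields a factor $\frac{1}{2}\Omega(X,Y)$ whose grading-$2$ part is $\frac{1}{4}\Omega_{\nu}(X,Y)$. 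Unpaired derivatives instead force the appearance of derivatives of $K^i_{jlk}$ via item~(5), which strictly lowers the grading. Consequently $(\nabla_X^k\nabla_Y^{k'}u_j)(x)$ has grading at most $2\min(k,k')$, and the total grading of the $(k,k')$-summand is at most $m+\ell-|k-k'|$, attaining $m+\ell$ precisely when $k=k'$.

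On the diagonal $k=k'$, Remark~\ref{threefacts}(3) combined with the combinatorics of matching the $k$ factors of $\nabla_X$ to the $k$ factors of $\nabla_Y$ (which can be done in $k!$ ways, each contributing a curvature factor $\frac{1}{4}\Omega_{\nu}(X,Y)$) identifies the top-graded part of $(\nabla_X^k\nabla_Y^k u_j)(x)$ as $4^{-k}k!\,\Omega_{\nu}(X,Y)^k\cdot u_j$. Substituting into the Leibniz expansion produces exactly the claimed leading sum, while all off-diagonal and sub-leading contributions collect into $c_{m,\ell}(X,\xi,Y,\sigma)$, which is polynomial in $\xi$ and $\sigma$ and of grading strictly below $m+\ell$.

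The hardest step is the grading accounting in the off-diagonal case $k\ne k'$: I must argue carefully that any ``unbalanced'' sequence of covariant derivatives, when expanded via the ABP Taylor formula $\widehat{\Gamma}^i_{jk}=x_l K^i_{jlk}(x)+O(|x|^2)$, must either pair derivatives into curvature factors $\frac{1}{2}\Omega(X,Y)$ or else consume a derivative of $K^i_{jlk}$, the latter costing at least one unit of grading. All remaining steps are direct applications of the Leibniz rule and the five consequences listed after Proposition~\ref{ABPprop}.
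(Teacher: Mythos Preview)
Your proposal is correct and follows essentially the same approach as the paper: the Leibniz expansion of $\nabla_X^m\nabla_Y^\ell$ on the product $f\cdot u_j$, followed by the grading analysis via the consequences of Proposition~\ref{ABPprop} and Remark~\ref{threefacts}, with the diagonal $k=k'$ terms identified as the top-graded piece. The paper's proof is terser---it compresses your pairing argument into the single observation that $\Omega(X,X)=\Omega(Y,Y)=0$ forces $k=k'$ for top grading---but the substance is the same.
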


\begin{proof}   
Note,
$$
\nabla^m_{X} \nabla^{\ell}_Y \Big( e^{i\langle  \exp^{-1}_{x}(x'),\xi \rangle} u_j \Big)(x) \,\, = \,\,  
\sum_{k =0}^m \sum_{k' =0}^{\ell} 
\hspace{-0.0cm}\Big(\hspace{-0.1cm}\begin{array}{c} m,\ell \\k,k'\end{array}\hspace{-0.1cm}\Big)
(X^{m - k}Y^{\ell - k'}e^{i\langle  \exp^{-1}_{x}(x'),\xi \rangle})(x)  (\nabla^k_X\nabla^{k'}_Y(u_j))(x).
$$
As $\Omega(X,X)(x) = \Omega(Y,Y)(x) = 0$, in order to get a term of grading $m + \ell$, we must have $k' = k$.  Then the term of grading $2k$ in $(\nabla^k_X\nabla^{k}_Y(u_j))(x)$  is $4^{-k} k!  \Omega_{\nu}(X,Y)^k$, and the term of grading $m+\ell-2k$ in $(X^{m - k}Y^{\ell - k}e^{i\langle  \exp^{-1}_{x}(x'),\xi \rangle})(x)$ is $\langle  iX, \xi \rangle^{m-k}  \langle  iY, \sigma\rangle^{\ell-k} $.
\end{proof}

We say an operator has grading $k$ if its symbol has that grading.

\begin{lemma}\label{crucial}
Suppose $X$ and $Y$ are as in Lemma \ref{sigexp1.5}.  Then modulo operators of lower grading
$$
\theta^{\alpha}(\langle  iX, \xi \rangle^m  \langle  iY, \sigma\rangle^{\ell}) \,\, = \,\,     
\sum_{k=0}^{\min(m,\ell)} 
\hspace{-0.2cm}  
(-4)^{-k}  k!\Big(\hspace{-0.1cm}\begin{array}{c} m,\ell \\k,k\end{array}\hspace{-0.1cm}\Big)
\nabla_X^{m-k} \nabla_Y^{\ell-k} 
\Omega_{\nu}(X,Y)^k.
$$
\end{lemma}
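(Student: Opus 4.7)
The plan is to invert the triangular relation from Lemma \ref{sigexp1.5}. Applying $\theta^{\alpha}$ to both sides of that lemma and invoking Corollary \ref{smdifop} --- together with the fact that the lower-graded remainder $c_{m,\ell}$ quantizes to a lower-graded operator --- yields, modulo operators of grading strictly less than $m+\ell$,
\begin{equation*}
\nabla_X^m \nabla_Y^\ell \;\equiv\; \sum_{k=0}^{\min(m,\ell)} 4^{-k}\, k!\, \binom{m}{k}\binom{\ell}{k}\, \Omega_\nu(X,Y)^k\, \theta^{\alpha}\bigl(\langle iX,\xi\rangle^{m-k}\langle iY,\sigma\rangle^{\ell-k}\bigr).
\end{equation*}
Here $\Omega_\nu(X,Y)^k$ is pulled in front of the quantization because it commutes pointwise with the scalar factors $\langle iX,\xi\rangle^{m-k}\langle iY,\sigma\rangle^{\ell-k}$. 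Moreover, since $\Omega_\nu(X,Y)^k$ lies in $C^\infty(\wedge^{2k}\nu^*)$, its commutator with any differential operator of derivative count $d$ is multiplication by a covariant derivative of $\Omega_\nu(X,Y)^k$ of wedge degree at most $2k$ and derivative count $d-1$, hence of grading at most $d+2k-1$. Consequently $\Omega_\nu(X,Y)^k$ commutes with $\nabla_X^{m-k}\nabla_Y^{\ell-k}$ modulo operators of grading strictly below $m+\ell$. Lemma \ref{crucial} is therefore equivalent to solving this triangular system for the quantizations on the right.

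The slickest way to carry out the inversion is to package the whole family of identities into formal generating series. I would set
\begin{equation*}
\tilde A(u,v) \,=\, \sum_{m,\ell\geq0}\frac{\nabla_X^m\nabla_Y^\ell}{m!\,\ell!}\,u^m v^\ell, \qquad \tilde B(u,v)\,=\,\sum_{m,\ell\geq0}\frac{\theta^{\alpha}(\langle iX,\xi\rangle^m\langle iY,\sigma\rangle^\ell)}{m!\,\ell!}\,u^m v^\ell.
\end{equation*}
Using the identity $k!\binom{m}{k}\binom{\ell}{k}/(m!\,\ell!) = 1/\bigl(k!(m-k)!(\ell-k)!\bigr)$ and reindexing $(m,\ell)\mapsto(m-k,\ell-k)$, the displayed triangular relation collapses to the clean formal equality
\begin{equation*}
\tilde A(u,v) \;\equiv\; e^{\Omega_\nu(X,Y)\,uv/4}\,\tilde B(u,v) \pmod{\text{lower grading}}.
\end{equation*}
Multiplying by $e^{-\Omega_\nu(X,Y)uv/4}$ and reading off the coefficient of $u^m v^\ell$ gives exactly
\begin{equation*}
\theta^{\alpha}\bigl(\langle iX,\xi\rangle^m\langle iY,\sigma\rangle^\ell\bigr) \;\equiv\; \sum_{k=0}^{\min(m,\ell)} (-4)^{-k}\,k!\,\binom{m}{k}\binom{\ell}{k}\, \nabla_X^{m-k}\nabla_Y^{\ell-k}\,\Omega_\nu(X,Y)^k,
\end{equation*}
after moving $\Omega_\nu(X,Y)^k$ back past the derivatives one final time. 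Equivalently, one may proceed by pedestrian induction on $m+\ell$, where the combinatorics collapses via the elementary identity $\sum_{k=0}^{K}(-1)^{K-k}\binom{K}{k}=0$ for $K\geq 1$.

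The main obstacle I foresee is purely grading bookkeeping: at each step one must confirm that the discarded remainders --- both the $\theta^{\alpha}(c_{m,\ell})$ term and the commutators $[\nabla_X^a\nabla_Y^b,\Omega_\nu(X,Y)^k]$ --- genuinely carry grading strictly below $m+\ell$. All such checks reduce to the single observation (already used in item (1) following Proposition \ref{ABPprop}) that covariant differentiation of a section of $\wedge^j\nu^*$ preserves the wedge degree $j$ while raising the derivative count by one, so each commutator loses a full unit of grading and every error term collected along the way is safely absorbed into the ``lower grading'' bin.
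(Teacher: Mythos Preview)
Your proof is correct and follows essentially the same route as the paper: apply $\theta^{\alpha}$ to Lemma~\ref{sigexp1.5} via Corollary~\ref{smdifop}, then invert the resulting triangular system. The paper carries out the inversion by straightforward induction on $m+\ell$, reducing to the identity $\sum_{k'=0}^{r-1}(-1)^{r-k'+1}\binom{r}{k'}=1$ --- precisely the ``pedestrian induction'' you mention at the end. Your generating-function packaging $\tilde A = e^{\Omega_\nu(X,Y)uv/4}\tilde B$ is a pleasant cosmetic improvement that makes the inversion a one-liner, but the underlying combinatorics and the grading bookkeeping (commuting $\Omega_\nu(X,Y)^k$ past derivatives, controlling the error terms) are identical to the paper's.
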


\begin{proof}  
For $m=0$ or $\ell =0$, this is just Lemma \ref{sigexp1}.   So, we need only assume that it is true for $0 \leq r < m$ and $0 \leq s <\ell$, and then prove it for $m,\ell$.  In what follows, we ignore operators of grading lower than $m + \ell$.

Using the fact that for differential operators and polynomial symbols $\theta^\alpha \circ \varsigma = \I$,   Corollary \ref{smdifop}, and applying $\theta^{\alpha}$  to the formula in the previous lemma, we have
$$
\nabla^{m}_{X} \nabla^{\ell}_Y   \,\, = \,\,
\sum_{k=0}^{\min(m,\ell)} 
\hspace{-0.2cm}  
4^{-k} k!\Big(\hspace{-0.1cm}\begin{array}{c} m,\ell \\k,k\end{array}\hspace{-0.1cm}\Big)
\theta^{\alpha}\Big[\langle  iX, \xi \rangle^{m-k}  \langle  iY, \sigma\rangle^{\ell-k} \Big]
\Omega_{\nu}(X,Y)^k.
$$
Set $s = \min(m,\ell)$, and rewrite this as
$$
\theta^{\alpha}\Big[\langle  iX, \xi \rangle^{m}  \langle  iY, \sigma\rangle^{\ell} \Big]
 \,\, = \,\,
\nabla^{m}_{X} \nabla^{\ell}_Y   \,\, - \,\, \sum_{k=1}^{s} 
4^{-k} k!\Big(\hspace{-0.1cm}\begin{array}{c} m,\ell \\k,k\end{array}\hspace{-0.1cm}\Big)
\theta^{\alpha}\Big[\langle i X, \xi \rangle^{m-k}  \langle  iY, \sigma\rangle^{\ell-k} \Big]
\Omega_{\nu}(X,Y)^k.
$$
Using the induction hypothesis, the second term on the right hand side equals
$$
- \sum_{k=1}^{s}  \sum_{k'=0}^{s-k}    
4^{-k} k!\Big(\hspace{-0.1cm}\begin{array}{c} m,\ell \\k,k\end{array}\hspace{-0.1cm}\Big)
(-4)^{-k'}  k'!\Big(\hspace{-0.1cm}\begin{array}{c} m-k,\ell -k\\k',k'\end{array}\hspace{-0.1cm}\Big)
\nabla_X^{m-k-k'} \nabla_Y^{\ell-k-k'} 
\Omega_{\nu}(X,Y)^{k+k'} \,\, = \,\, 
$$
$$ 
\sum_{k=1}^{s}  
\sum_{k'=0}^{s-k}    
(-1)^{k+1} (-4)^{-(k+k')} \frac{ (k+k')!}{k! k'!}\Big(\hspace{-0.1cm}\begin{array}{c} m,\ell \\k+k',k+k'\end{array}\hspace{-0.1cm}\Big)(k+k')!
 \nabla_X^{m-(k+k')} \nabla_Y^{\ell-(k+k')} 
\Omega_{\nu}(X,Y)^{k+k'} \,\, = \,\,
$$
$$ 
\sum_{r=1}^{s} \Big[ \sum_{k'=0}^{r-1} 
(-1)^{r-k'+1} \frac{ r!}{(r-k')! k'!} \Big]  (-4)^{-r} r!\Big(\hspace{-0.1cm}\begin{array}{c} m,\ell \\ r,r\end{array}\hspace{-0.1cm}\Big)
 \nabla_X^{m-r} \nabla_Y^{\ell-r} 
\Omega_{\nu}(X,Y)^r \,\, = \,\,
$$
$$ 
\sum_{r=1}^{ \min(m,\ell)} (-4)^{-r} r! \Big(\hspace{-0.1cm}\begin{array}{c} m,\ell \\ r,r\end{array}\hspace{-0.1cm}\Big)
 \nabla_X^{m-r} \nabla_Y^{\ell-r} 
\Omega_{\nu}(X,Y)^r.
$$
\end{proof}

Now we extend Theorem 2.7, \cite{Getz},  on composing symbols in $SC^{\infty,\infty} (M, E)$ which are polynomial in $\xi$ and $\sigma$.   This is just another application of \cite{ABP}, Proposition 3.7.  First, we have some notation.  Let $p, q \in SC^{\infty,\infty} (M, E)$ be two such symbols, and set 
$$
p \circ q \,\, = \,\, \varsigma(\theta^{\alpha}p \circ \theta^{\alpha}q).
$$
Next write 
$p = \sum_j   \omega_{p,j}  \otimes  p_j$, where $\omega_{p,j} \in C^{\infty}(\wedge^* \nu^*)$ and $p_j \in S^{\infty,\infty}(M, E)$ and  similarly  write
$q = \sum_k \omega_{q,k}  \otimes  q_k$.  Then set
$$
p(x,\xi,\sigma) \wedge q(x,\xi' ,\sigma')  \,\, = \,\,  \sum_{j,k} \omega_{p,j} \wedge\omega_{q,k} \otimes  p_j(x,\xi ,\sigma)q_k(x,\xi' ,\sigma').
$$
Note  that we are taking the usual wedge product of the form part of the symbols here, not the Clifford product.

Let $e_1,...,e_n$ be a local orthonormal basis of $TM$ with dual orthonormal basis $f_1,...,f_n$ of $T^*M$, with $f_1,...,f_q$ a local basis for $\nu^*$.  Set, as usual,
$$
\Omega_{\nu}(e_i,e_j) \,\, = \,\, \sum_{i,j,=1}^n  \sum_{k,\ell=1}^q (\Omega_{\nu})^k_{\ell,i,j} e_k \otimes f_{\ell}, \; \text{ that is }  \;  (\Omega_{\nu})^k_{\ell,i,j} \,\, = \,\, \langle \Omega_{\nu}(e_i,e_j)(e_{\ell}), e_k \rangle,
$$
and note that $(\Omega_{\nu})^k_{\ell,i,j}$ is skew in the indices $i,j$ (since $\Omega_{\nu}$ is a 2-form) as well as the $k,\ell$,
(since $\Omega_{\nu}$ has coefficients in $so_q = spin_q)$.

Set 
$$
\Omega_{\nu}(\pa/\pa \xi,\pa/\pa \xi') p(x,\xi ,\sigma) \wedge q(x,\xi' ,\sigma') \,\, =  
\sum_{i,j=1}^n  \sum_{k,\ell=1}^q(\Omega_{\nu})^k_{\ell,i,j} f_k \wedge f_{\ell} \wedge
\frac{\pa p(x,\xi ,\sigma)}{\pa \xi_i } \wedge \frac{\pa q(x,\xi' ,\sigma')}{\pa \xi '_j },
$$
so $e^{-\frac{1}{4}\Omega_{\nu}(\pa/\pa \xi,\pa/\pa \xi') }$ is actually a finite sum of compositions of such operators, and the number of compositions is $\leq q/2$ because of the $f_k \wedge f_{\ell}$.  We also set 
$$
\Omega_{\nu}(\pa/\pa \xi,\pa/\pa \sigma) p(x,\xi ,\sigma) \wedge q(x,\xi' ,\sigma') \,\, =  
\sum_{k,\ell=1}^q\sum_{i=1}^n\sum_{j=q+1}^n (\Omega_{\nu})^k_{\ell,i,j} f_k \wedge f_{\ell} \wedge
\frac{\pa^2 p(x,\xi ,\sigma)}{\pa \xi_i \pa \sigma_j} \wedge q(x,\xi' ,\sigma'),
$$
and the similarly defined operators $\Omega_{\nu}(\pa/\pa \xi,\pa/\pa \sigma')$, $\Omega_{\nu}(\pa/\pa \sigma, \pa/\pa \xi')$, $\Omega_{\nu}(\pa/\pa \sigma,\pa/\pa \sigma')$, and $\Omega_{\nu}(\pa/\pa \xi',\pa/\pa \sigma')$.  
\begin{definition}\label{Omega}
Set
$$
\Omega_{\nu}(\pa/\pa (\xi,\sigma),\pa/\pa (\xi',\sigma')) \,\, = \,\,   
 \Omega_{\nu}(\pa/\pa \xi,\pa/\pa \xi') \, + \,
 \Omega_{\nu}(\pa/\pa \xi,\pa/\pa \sigma')  \, + \, 
 \Omega_{\nu}(\pa/\pa \sigma, \pa/\pa \xi')\, + \, \Omega_{\nu}(\pa/\pa \sigma,\pa/\pa \sigma').
 $$
\end{definition}

We are now in position to state our {{first main result which is a}} foliation version of Theorem 2.7 of \cite{Getz} (see also Theorem 2.1 of \cite{BF}):

\begin{theorem}\label{comp}
Let $p \in SC^{m,\ell} (M, E)$ and $q \in SC^{m', \ell'} (M, E)$ be polynomial in $\xi$ and $\sigma$.  There are differential operators $a_k$, $k \geq 0$ on the bundle $SC^{\infty,\infty} (M, E) \otimes_{C^{\infty}(M)}  SC^{\infty,\infty} (M, E)$ so that if we denote by $a_k(p,q)$ the image of $a_k(p \otimes q)$ under 
$$
SC^{\infty,\infty} (M, E) \otimes_{C^{\infty}(M)}  SC^{\infty,\infty} (M, E) \longrightarrow SC^{\infty,\infty} (M, E),
$$
 the fiberwise composition of endomorphisms, then

\begin{enumerate}

\item
$\dd a_k(p,q) \in \sum_{k_1+k_2 = k}SC^{m + m' - k_1,  \ell + \ell' - k_2} (M, E)$;

\item
$p \circ q \,\, = \,\, \sum_0^{\infty} a_k(p,q)$, which is actually a finite sum;

\medskip
\item
$a_0(p,q)(x, \xi, \sigma)  \,\, = \,\,  e^{-\frac{1}{4}\Omega_{\nu}(\pa/\pa (\xi,\sigma),\pa/\pa (\xi',\sigma'))}p(x,\xi,\sigma) \wedge q(x, \xi',\sigma') \, |_{(\xi',\sigma') =  (\xi,\sigma)}$.
\end{enumerate}

\end{theorem}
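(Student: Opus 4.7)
My plan is to reduce to elementary monomial symbols, invoke Lemma \ref{crucial} to express each quantization as a combination of covariant derivatives plus curvature terms, compose these differential operators, and extract the leading graded part of the resulting symbol via Proposition \ref{ABPprop}. This sidesteps the Baker--Campbell--Hausdorff formula that \cite{Getz,BF} had to rely on.

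First I would use $C^{\infty}(M)$-linearity and bilinearity in the form factors, together with polarization in $\xi$ and $\sigma$, to reduce to the case
$$
p \;=\; \omega_p \otimes \langle iX, \xi\rangle^m \langle iY, \sigma\rangle^{\ell},
\qquad
q \;=\; \omega_q \otimes \langle iX', \xi\rangle^{m'} \langle iY', \sigma\rangle^{\ell'},
$$
with $X, X' \in C^{\infty}(TM)$ of constant coefficients at $x$ and $Y, Y' \in C^{\infty}(TF)$ (the $TF$-membership being part of the data, so that the variable $\sigma$ is used). Lemma \ref{crucial} then gives, modulo operators of strictly lower grading,
$$
\theta^{\alpha}(p) \;\equiv\; c(\omega_p) \sum_{k \ge 0} (-4)^{-k} k! \binom{m}{k}\binom{\ell}{k} \nabla_X^{m-k}\nabla_Y^{\ell-k} \Omega_{\nu}(X,Y)^k,
$$
and similarly for $\theta^{\alpha}(q)$. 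Since composing polynomial symbols yields a polynomial symbol, $\varsigma(\theta^{\alpha}(p)\circ\theta^{\alpha}(q))$ is automatically a finite sum; decomposing by total grading $m + m' + \ell + \ell' - k$ isolates the operators $a_k(p,q)$ and gives parts (1) and (2).

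For part (3), I would apply $\varsigma$ to the composed differential operator and isolate the terms of maximum grading via Lemma \ref{sigexp1.5} and Remark \ref{threefacts}. These contributions come from (a) the pure polynomial pairing $\langle iX,\xi\rangle^{m}\langle iY,\sigma\rangle^{\ell}\langle iX',\xi\rangle^{m'}\langle iY',\sigma\rangle^{\ell'}$ together with the wedge $\omega_p \wedge \omega_q$, and (b) all possible cross-contractions of a $\nabla$ from the $p$-block with a $\nabla$ from the $q$-block through the curvature $\Omega_{\nu}$. Proposition \ref{ABPprop} tells us that each such cross-pair, say $\nabla_X$ from $p$ paired with $\nabla_{X'}$ from $q$, contributes a factor of $-\tfrac{1}{4}\Omega_{\nu}(X,X')$ wedged into the form part, with analogous factors for the $(X,Y')$, $(Y,X')$, $(Y,Y')$ pairings. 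Summing over all pairings and collecting binomial coefficients should produce exactly
$$
\sum_{j \geq 0} \frac{1}{j!}\Bigl(-\tfrac{1}{4}\Omega_{\nu}(\partial/\partial(\xi,\sigma),\partial/\partial(\xi',\sigma'))\Bigr)^{j} \bigl[\, p(x,\xi,\sigma) \wedge q(x,\xi',\sigma') \,\bigr]\Big|_{(\xi',\sigma')=(\xi,\sigma)},
$$
which is the claimed formula for $a_0(p,q)$.

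The main obstacle will be the combinatorial cancellation that assembles a \emph{single} exponential rather than a BCH-type product. Lemma \ref{crucial} already contains the one-block form of this identity, whose proof relied on a telescoping of alternating binomial sums; the composition case requires the same mechanism applied simultaneously to the four types of cross-pairings, intertwined with the Clifford-to-wedge conversion of Equation (\ref{wedge}) when composing $c(\omega_p)$ with $c(\omega_q)$ (the contraction terms there drop form-degree and thus contribute only to $a_k$ for $k>0$). Checking that the alternating signs and $(-4)^{-k}$ factors from Lemma \ref{crucial} combine with the curvature coefficients of Proposition \ref{ABPprop} into the exponential series above is precisely the ``cancellation of operators'' highlighted in the paper's introduction, and is what the ABP identity enables us to see without recourse to BCH.
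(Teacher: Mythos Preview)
Your proposal follows essentially the same route as the paper: reduce to monomial symbols $h\langle iX,\xi\rangle^m\langle iY,\sigma\rangle^\ell$, invoke Lemma~\ref{crucial} to convert each quantization into a sum of $\nabla_X^{m-k}\nabla_Y^{\ell-k}\Omega_\nu(X,Y)^k$, compose, take $\varsigma$, and match the highest-grade part against the exponential formula.

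One point to sharpen: in your item (b) you describe the leading contributions as coming only from \emph{cross}-contractions between the two blocks, but when you compute $\varsigma\bigl(\nabla_X^{m-k}\nabla_Y^{\ell-k}\nabla_W^{m'-k'}\nabla_Z^{\ell'-k'}\bigr)$ via Proposition~\ref{ABPprop}, all six pairings $\Omega_\nu(X,Y),\Omega_\nu(X,W),\Omega_\nu(X,Z),\Omega_\nu(Y,W),\Omega_\nu(Y,Z),\Omega_\nu(W,Z)$ appear at top grade, including the self-pairings $\Omega_\nu(X,Y)$ and $\Omega_\nu(W,Z)$. These self-pairings carry factors $4^{-a}$ and $4^{-f}$ with positive sign, while Lemma~\ref{crucial} has already supplied self-pairing factors $(-4)^{-k}\Omega_\nu(X,Y)^k$ and $(-4)^{-k'}\Omega_\nu(W,Z)^{k'}$ with alternating sign. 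The cancellation you allude to is precisely between these two sources. The paper makes it transparent by writing
$$
e^{-\frac{1}{4}\Omega_\nu(\partial/\partial(\xi,\sigma),\partial/\partial(\xi',\sigma'))}
= \Bigl(\text{four cross-factors}\Bigr)\cdot
e^{-\frac{1}{4}\Omega_\nu(\partial/\partial\xi,\partial/\partial\sigma)}
e^{-\frac{1}{4}\Omega_\nu(\partial/\partial\xi',\partial/\partial\sigma')}
e^{+\frac{1}{4}\Omega_\nu(\partial/\partial\xi,\partial/\partial\sigma)}
e^{+\frac{1}{4}\Omega_\nu(\partial/\partial\xi',\partial/\partial\sigma')},
$$
so that the $e^{+\frac14}$ factors reproduce exactly the $(-4)^{-k}$, $(-4)^{-k'}$ self-pairing sums coming from Lemma~\ref{crucial}, and the $e^{-\frac14}$ self-factors together with the four cross-factors reproduce the full six-index multinomial from the $\varsigma$ step. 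With that identity in hand the combinatorics is a direct comparison rather than a telescoping argument.
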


Before giving the proof, we remark on two special cases.  If the dimension of $F$ is zero, that is the foliation is by points, this is Theorem 2.7 of \cite{Getz}.  In this case, $\sigma$, $\sigma'$, $\ell$, $\ell'$, and $k_2$ just disappear.  At the other end of the spectrum, when the foliation has maximal dimension, so it has a single leaf, we are in the case considered by Widom in \cite{Wid1}.   In this case $\xi$, $\xi'$, $m$, $m'$, $k_1$,  and $-\frac{1}{4}\Omega_{\nu}(\pa/\pa (\xi,\sigma),\pa/\pa (\xi',\sigma'))$ disappear, and we have
$$
a_0(p,q) \,\, = \,\,  p(x,\sigma)  q(x, \sigma),
$$
the first term in the formula in Corollary 4.11 of \cite{Wid1}.   The interesting new situations now occur in intermediate dimensions and for non trivial foliations.

\begin{proof}  To prove the theorem,  we proceed just as in the proof of Lemma \ref{sigexp1.5}, and we use the same notation.   Careful bookkeeping of the terms which are ignored in the calculation we do will prove parts $(1)$ and $(2)$ of the theorem, so what we will prove is part $(3)$.

The first step is to note that any symbol which is polynomial in $\xi$ and $\sigma$ can be written as a sum of symbols {of} the form $p_{m,\ell,r} = h \langle iX, \xi \rangle^m \langle iY, \sigma \rangle^{\ell}$, where $X$ and $Y$ are local sections of $TM$ as in Lemma \ref{sigexp1.5}, $h \in C^{\infty}(\wedge^r \nu^* \otimes \End(E))$, and $Y(x)$ is in $TF_x$, and that is part of the data.  Since $(p,q) \to p \circ q$ is linear in both variables, we may assume that $p = h \langle iX, \xi \rangle^m \langle iY, \sigma \rangle^{\ell}$ and   $ q = g \langle iW, \xi \rangle^{m'} \langle iZ, \sigma \rangle^{\ell'}$, where $g \in C^{\infty}(\wedge^{r'} \nu^* \otimes \End(E))$.  

For the simplest cases, say for $m = m' = 1$ and $\ell,\ell',r$ and $r'$ are zero, we may use Lemma \ref{twovecs} to get 
 $$
\langle iX, \xi \rangle \circ \langle iW, \xi \rangle \,\, = \,\, {\varsigma}(\nabla_X\nabla_W)(x,\xi) \,\, = \,\, \langle  iX,\xi\rangle\langle   iW,\xi\rangle  - \frac{1}{4}\Omega_{\nu} (iX,iW)  +  iX_{x}W\langle  \exp^{-1}_{x}(x'),\xi \rangle +  \frac{1}{2}\Omega_E (X,W).
$$
Then note that $\Omega_{\nu} (iX,iW) = \Omega_{\nu}(\pa/\pa (\xi,\sigma),\pa/\pa (\xi',\sigma')) \langle  iX,\xi\rangle\langle   iW,\xi' \rangle  \, |_{\xi'=\xi}$, and the last two terms have grading less than two.
  
For the general case, we use the three facts from Remark \ref{threefacts}, and the fact that operators of the form $\Omega_{\nu}(X,Y)$ commute with operators of the form $\nabla_Z$, modulo operators of lower grading.  Then we have that, modulo operators of lower grading, that is less than $m + \ell + r + m' + \ell' + r' $, 
$$
p \circ  q(x,\xi,\sigma) \,\, = \,\,   \varsigma \Big(\theta^{\alpha}(h \langle iX, \xi \rangle^m \langle iY, \sigma \rangle^{\ell}) \circ \theta^{\alpha}(g \langle iW, \xi \rangle^{m'} \langle iZ, \sigma \rangle^{\ell'})\Big)   \,\, = \,\, 
$$  
$$
\varsigma \Big(
\sum_{k=0}^{\min(m,\ell)} 
\hspace{-0.2cm} 
h(x)(-4)^{-k}  k!\Big(\hspace{-0.1cm}\begin{array}{c} m,\ell \\k,k\end{array}\hspace{-0.1cm}\Big)
\nabla_X^{m-k} \nabla_Y^{\ell-k} 
\Omega_{\nu}(X,Y)^k    
\sum_{k'=0}^{\min(m',\ell')} 
\hspace{-0.2cm} 
g(x)(-4)^{-k'}  k'!\Big(\hspace{-0.1cm}\begin{array}{c} m',\ell' \\k',k'\end{array}\hspace{-0.1cm}\Big)
\nabla_W^{m'-k'} \nabla_Z^{\ell'-k'} 
\Omega_{\nu}(W,Z)^{k'}\Big)   \,\, = \,\, 
$$
$$
gh\sum_{k, k'} (-4)^{-(k+k')}  k!\Big(\hspace{-0.1cm}\begin{array}{c} m,\ell \\k,k\end{array}\hspace{-0.1cm}\Big)
 k'!\Big(\hspace{-0.1cm}\begin{array}{c} m',\ell' \\k',k'\end{array}\hspace{-0.1cm}\Big)
\Omega_{\nu}(X,Y)^k  \Omega_{\nu}(W,Z)^{k'} 
\varsigma \Big(\nabla_X^{m-k} \nabla_Y^{\ell-k} \nabla_W^{m'-k'} \nabla_Z^{\ell'-k'} \Big),
$$
since operators of the form $X^a Y^b (g)$ {{have grading at most $r' \leq r'+a+b$.}}  Thus we may ignore $g$ and $h$, and we only need the terms of highest grading of $\varsigma \Big(\nabla_X^{m-k} \nabla_Y^{\ell-k} \nabla_W^{m'-k'} \nabla_Z^{\ell'-k'} \Big)$, i.\ e.\ those terms of grading $m+ \ell + m' + \ell' -2(k+k')$. An argument as in the proof of Lemma  \ref{sigexp1.5}  then shows that these terms are given by
\begin{multline*}
\sum 4^{-\what{k}} a! b! c! d! e! f!
\Big(\hspace{-0.1cm}\begin{array}{c} m-k \\ a, b, c \end{array}\hspace{-0.1cm}\Big) 
\Big(\hspace{-0.1cm}\begin{array}{c} \ell-k \\  a, d, e \end{array}\hspace{-0.1cm}\Big) 
\Big(\hspace{-0.1cm}\begin{array}{c} m' -k' \\ b, d, f \end{array}\hspace{-0.1cm}\Big) 
\Big(\hspace{-0.1cm}\begin{array}{c} \ell' -k' \\ c, e, f\end{array}\hspace{-0.1cm}\Big) 
\\
\langle iX, \xi \rangle^{m-k_X}  \langle iY, \sigma \rangle^{\ell-k_Y}\langle iW, \xi\rangle^{m'-k_W} \langle iZ, \sigma \rangle^{\ell'-k_Z} 
\\
\Omega_{\nu}(X,Y)^a   \Omega_{\nu}(X,W)^b   \Omega_{\nu}(X,Z)^c   \Omega_{\nu}(Y,W)^d   \Omega_{\nu}(Y,Z)^e   \Omega_{\nu}(W,Z)^f,
\end{multline*}

\noindent
where $\what{k} = a+b+c+d+e+f$, $k_X =  k+a+b+c$, $k_Y = k+a+d+e$, $k_W = k'+b+d+f$, $k_Z = k'+c+e+f$, and the sum is taken over all  $a,b,c,d,e,f \geq 0$ so that $m-k_X$, $\ell-k_Y$, $m'-k_W$, and $\ell'-k_Z $ are all $\geq 0$.  We have used here the convenient notation
$$
\Big(\hspace{-0.1cm}\begin{array}{c}  m \\ a, b, c \end{array}\hspace{-0.1cm}\Big) \,\, = \,\, 
\frac{m!}{a!b!c!(m-(a+b+c))!}.
$$
Substituting in the expression above for $p \circ q$ (and ignoring $gh$), gives
$$
a_0(\langle iX, \xi \rangle^m \langle iY, \sigma \rangle^{\ell}, \ \langle iW, \xi \rangle^{m'} \langle iZ, \sigma \rangle^{\ell'})  \,\, = \,\, 
$$
\begin{multline*}
\sum 4^{\what{k}} (-4)^{-(k+k')} k!\Big(\hspace{-0.1cm}\begin{array}{c} m,\ell \\k,k\end{array}\hspace{-0.1cm}\Big)
 k'!\Big(\hspace{-0.1cm}\begin{array}{c} m',\ell' \\k',k'\end{array}\hspace{-0.1cm}\Big)
a! b! c! d! e! f!
\Big(\hspace{-0.1cm}\begin{array}{c} m-k \\ a, b, c \end{array}\hspace{-0.1cm}\Big) 
\Big(\hspace{-0.1cm}\begin{array}{c} \ell-k \\  a, d, e \end{array}\hspace{-0.1cm}\Big) 
\Big(\hspace{-0.1cm}\begin{array}{c} m'-k' \\ b, d, f \end{array}\hspace{-0.1cm}\Big) 
\Big(\hspace{-0.1cm}\begin{array}{c} \ell'-k' \\ c, e, f\end{array}\hspace{-0.1cm}\Big) 
\\
\langle iX, \xi \rangle^{m-k_X}  \langle iY, \sigma \rangle^{\ell-k_Y}\langle iW, \xi\rangle^{m'-k_W} \langle iZ, \sigma \rangle^{\ell'-k_Z} 
\\
\Omega_{\nu}(X,Y)^{a+k}   \Omega_{\nu}(X,W)^b   \Omega_{\nu}(X,Z)^c   \Omega_{\nu}(Y,W)^d   \Omega_{\nu}(Y,Z)^e   \Omega_{\nu}(W,Z)^{f+k'}.
\end{multline*}

The operators in $\Omega_{\nu}(\pa/\pa (\xi,\sigma),\pa/\pa (\xi',\sigma'))$, along with the operators  
$\Omega_{\nu}(\pa/\pa \xi,\pa/\pa \sigma)$ and $\Omega_{\nu}(\pa/\pa \xi',\pa/\pa \sigma')$, all commute.
In addition, 
$e^{-\frac{1}{4}\Omega_{\nu}(\pa/\pa \xi,\pa/\pa \sigma)}e^{\frac{1}{4}\Omega_{\nu}(\pa/\pa \xi,\pa/\pa \sigma)} = e^{-\frac{1}{4}\Omega_{\nu}(\pa/\pa \xi',\pa/\pa \sigma')}e^{\frac{1}{4}\Omega_{\nu}(\pa/\pa \xi',\pa/\pa \sigma')} = \I$.
Thus, 
$$
e^{-\frac{1}{4}\Omega_{\nu}(\pa/\pa (\xi,\sigma),\pa/\pa (\xi',\sigma')) }  \,\, = \,\, 
$$
$$
e^{-\frac{1}{4}\Omega_{\nu}(\pa/\pa \xi,\pa/\pa \xi')} 
e^{-\frac{1}{4}\Omega_{\nu}(\pa/\pa \xi,\pa/\pa \sigma') }
e^{-\frac{1}{4}\Omega_{\nu}(\pa/\pa \sigma, \pa/\pa \xi')}
e^{-\frac{1}{4}\Omega_{\nu}(\pa/\pa \sigma,\pa/\pa \sigma') }
$$
$$
\hspace{1.5cm}e^{-\frac{1}{4}\Omega_{\nu}(\pa/\pa \xi',\pa/\pa \sigma')}
e^{-\frac{1}{4}\Omega_{\nu}(\pa/\pa \xi,\pa/\pa \sigma)}
e^{\frac{1}{4}\Omega_{\nu}(\pa/\pa \xi',\pa/\pa \sigma')}
e^{\frac{1}{4}\Omega_{\nu}(\pa/\pa \xi,\pa/\pa \sigma)}.
$$
Now note, for example, that 
$$
e^{\frac{1}{4}\Omega_{\nu}(\pa/\pa \xi',\pa/\pa \sigma')}e^{\frac{1}{4} \Omega_{\nu}(\pa/\pa \xi,\pa/\pa \sigma)} \Big( \langle iX, \xi \rangle^m  \langle iY, \sigma \rangle^{\ell}\langle iW, \xi \rangle^{m'} \langle iZ, \sigma \rangle^{\ell'}\Big) 
\,\, = \,\,
$$
$$
\sum_{k,k'} (-4)^{-k} k! 
\Big(\hspace{-0.1cm}\begin{array}{c} m , \ell \\ k, k \end{array}\hspace{-0.1cm}\Big) 
\langle iX, \xi \rangle^{m-k}  \langle iY, \sigma \rangle^{\ell-k}\Omega_{\nu}(X,Y)^k 
(-4)^{-k'} k'! 
\Big(\hspace{-0.1cm}\begin{array}{c} m' , \ell' \\ k', k' \end{array}\hspace{-0.1cm}\Big) 
\langle iW, \xi \rangle^{m'-k'} \langle iZ, \sigma \rangle^{\ell'-k'}\Omega_{\nu}(W,Z)^{k'}.
$$

A similar computation gives that 
$$
e^{-\frac{1}{4}\Omega_{\nu}(\pa/\pa (\xi,\sigma),\pa/\pa (\xi',\sigma')) } e^{-\frac{1}{4}\Omega_{\nu}(\pa/\pa \xi',\pa/\pa \sigma')}
e^{-\frac{1}{4}\Omega_{\nu}(\pa/\pa \xi,\pa/\pa \sigma)}
\Big(\langle iX, \xi \rangle^{m-k}  \langle iY, \sigma \rangle^{\ell-k}\langle iW, \xi \rangle^{m'-k'} \langle iZ, \sigma \rangle^{\ell'-k'}\Big) \,\, = \,\, 
$$
\begin{multline*}
\sum 4^{-\what{k}} a! b! c! d! e! f!
\Big(\hspace{-0.1cm}\begin{array}{c} m-k \\ a, b, c \end{array}\hspace{-0.1cm}\Big) 
\Big(\hspace{-0.1cm}\begin{array}{c} \ell-k \\  a, d, e \end{array}\hspace{-0.1cm}\Big) 
\Big(\hspace{-0.1cm}\begin{array}{c} m' -k' \\ b, d, f \end{array}\hspace{-0.1cm}\Big) 
\Big(\hspace{-0.1cm}\begin{array}{c} \ell' -k' \\ c, e, f\end{array}\hspace{-0.1cm}\Big) 
\\
\langle iX, \xi \rangle^{m-k_X}  \langle iY, \sigma \rangle^{\ell-k_Y}\langle iW, \xi\rangle^{m'-k_W} \langle iZ, \sigma \rangle^{\ell'-k_Z} 
\\
\Omega_{\nu}(X,Y)^a   \Omega_{\nu}(X,W)^b   \Omega_{\nu}(X,Z)^c   \Omega_{\nu}(Y,W)^d   \Omega_{\nu}(Y,Z)^e   \Omega_{\nu}(W,Z)^f.
\end{multline*}
Combining these we finally get the desired equality
$$
a_0(\langle iX, \xi \rangle^m \langle iY, \sigma \rangle^{\ell}, \ \langle iW, \xi \rangle^{m'} \langle iZ, \sigma \rangle^{\ell'}) \,\, = \,\, 
e^{-\frac{1}{4}\Omega_{\nu}(\pa/\pa (\xi,\sigma),\pa/\pa (\xi',\sigma')) } \Big(\langle iX, \xi \rangle^m \langle iY, \sigma \rangle^{\ell}\langle iW, \xi \rangle^{m'} \langle iZ, \sigma \rangle^{\ell'}\Big).
$$ 
\end{proof}
 \medskip
In the case where $F$ is a Riemannian foliation, the formula for $a_0(p,q)(x, \xi, \sigma)$ has a particularly simple form.    Note that in this case, we use a Bott connection on $\nu^*$.  It is then immediate that  whenever $Y \in TF$, $\Omega_{\nu}(Y,X) = 0$, since the curvature is locally the pull back of the curvature on any transversal.  Write $\xi = (\eta,\zeta)$ where $\eta$ is the projection of $\xi$ to $\nu^*$.  Then all the terms of 
$$
\Omega_{\nu}(\pa/\pa (\xi,\sigma),\pa/\pa (\xi,\sigma'))  \,\, = \,\, 
\Omega_{\nu}(\pa/\pa (\eta, \zeta,\sigma),\pa/\pa (\zeta',\eta',\sigma'))
$$
are zero except  $\Omega_{\nu}(\pa/\pa \eta,\pa/\pa \eta')$, and we have
$$
a_0(p,q)(x, \eta, \zeta, \sigma) \,\, = \,\,  e^{-\frac{1}{4}\Omega_{\nu}(\pa/\pa \eta,\pa/\pa \eta')}
p_0(x,\eta, \zeta, \sigma) \wedge q_0(x, \eta', \zeta, \sigma) \, |_{\eta'=\eta},
$$
and the operator $\Omega_{\nu}(\pa/\pa \eta,\pa/\pa \eta')$ is identical to the one in \cite{Getz} and \cite{BF}.

As examples, we show how Theorem \ref{comp} applies to  pure form operators and to the operator $D^2$.  Suppose that  $ p = \omega_{\alpha} \otimes 1$ and $q = \omega_{\beta} \otimes 1$ are pure form operators of degree ${\alpha}$ and ${\beta}$.  By Equation \ref{wedge},
$$
\varsigma(  \omega_{\alpha} \otimes 1 \circ  \omega_{\beta} \otimes 1)  \,\, = \,\,
 \omega_{\alpha}  \cdot  \omega_{\beta} \otimes 1  \,\, = \,\,
$$
$$
 \omega_{\alpha} \wedge\omega_{\beta} \otimes  1
  \,\, \pm \,\,   
  \sum_k (i_{e_k}  \omega_{\alpha} ) \wedge (i_{e_k}  \omega_{\beta} ) \otimes  1   \,\, \pm \,\,   
\sum_{k,\ell} (i_{e_k} i_{e_{\ell}} \omega_{\alpha} ) \wedge (i_{e_k} i_{e_{\ell}}  \omega_{\beta} )  \otimes  1  \,\, + \,\,
\cdots
$$
All of the terms on the right, except the first one (which has grading  $\alpha + \beta$), have grading less than $\alpha + \beta$.  This is precisely why we change from the Clifford product to the usual wedge product in this theorem.  Note that 
$$
a_0(p,q)   =   \omega_{\alpha} \wedge \omega_{\beta}, \;\;  a_1(p,q)  = 
 \pm    \sum_k (i_{e_k}  \omega_{\alpha} ) \wedge (i_{e_k}  \omega_{\beta} ), \;\;
 a_2(p,q)  =   \pm   
\sum_{k,\ell} (i_{e_k} i_{e_{\ell}} \omega_{\alpha} ) \wedge (i_{e_k} i_{e_{\ell}}  \omega_{\beta} ),  \;   \ldots
$$

Now consider $D^2$, and assume for simplicity that $F$ is Riemannian and that $\vartheta_{\nu} = 0$.  Then we have $\varsigma(D)  \in SC^{2,0}(M,E)$, and modulo terms with lower grading,
$$
\varsigma(D)\circ \varsigma(D)(x,\xi,\sigma)  \,\, = \,\, 
e^{-\frac{1}{4}\Omega_{\nu}(\pa/\pa (\xi,\sigma),\pa/\pa (\xi',\sigma')) }
 \varsigma(D)(x,\xi,\sigma) \wedge \varsigma(D)(x, \xi',\sigma') \, |_{(\xi',\sigma') = (\xi,\sigma)}.
$$
We need to be careful about what ``terms with lower grading" means.  As $\varsigma(D)  \in SC^{2,0}(M,E)$,   we have $a_0(\varsigma(D),\varsigma(D)) \in SC^{4,0}(M,E)$.   In this case, $a_0(\varsigma(D),\varsigma(D))$ will not contain the terms of $\varsigma(D)\circ \varsigma(D)$ of maximal grading, so the terms of maximal grading will come from $a_n(\varsigma(D),\varsigma(D))$ for $n > 0$.  Now, $a_0(\varsigma(D), \varsigma(D))$ is given by
$$
 \varsigma(D)(x,\xi,\sigma) \wedge \varsigma(D)(x, \xi',\sigma') \,\, - \,\,\frac{1}{4}\Omega_{\nu}(\pa/\pa (\xi,\sigma),\pa/\pa (\xi',\sigma')) \varsigma(D)(x,\xi,\sigma) \wedge \varsigma(D)(x, \xi',\sigma') \,  |_{(\xi',\sigma') = (\xi,\sigma)} \,\, = \,\, 
$$ 
$$
(i \sum_{j=1}^q   f_j   \otimes \,  \xi_j  - \frac{1}{2} \mu ) \wedge (i \sum_{k=1}^q  f_k  \otimes \,  \xi'_k  - \frac{1}{2} \mu ) \,\, - \,\,  \frac{1}{4}\sum^q_{i,j,k,\ell=1}(\Omega_{\nu})^k_{\ell,i,j} f_k \wedge f_{\ell}  \wedge if_i \wedge i f_j \,  |_{(\xi',\sigma') = (\xi,\sigma)} \,\, = \,\,
$$
$$
- \sum_{j,k=1}^q  f_j \wedge  f_k \otimes \,  \xi_j \xi_k  \,\, - \,\,
\frac{i}{2}\Big( \sum_{k=1}^q  \mu \wedge f_k  \otimes \,  \xi_k + \sum_{j=1}^q  f_j   \wedge \mu \otimes \,  \xi_k \Big) \,\, - \,\, 
 \frac{1}{4}\sum^q_{i,j,k,\ell=1}(\Omega_{\nu})^k_{\ell,i,j} f_k \wedge f_{\ell}  \wedge if_i \wedge i f_j   \,\, = \,\,
$$
$$
 \frac{1}{4}\sum^q_{i,j,k,\ell=1}(\Omega_{\nu})^k_{\ell,i,j} f_k \wedge f_{\ell}  \wedge f_i \wedge  f_j   \,\, = \,\,
\frac{1}{2}  \sum_{j<k} f_j \wedge f_k \wedge \Omega_{\nu}(e_j,e_k) \,\, = \,\, \frac{1}{8}\kappa,  
$$
since $F$ is Riemannian.
So, in this case, $a_0(\varsigma(D),\varsigma(D)) $ gives the term which has a chance of having grading four, but actually has  grading zero.  The reason this happens, that is $a_0(\varsigma(D), \varsigma(D))$ gives us no information about  the terms of maximal grading of $\varsigma(D^2)$, will be clarified in the next section and it is, as in the classical case, that $tD$ is not an A$\Psi$DO, while $t^2D^2$ is an A$\Psi$DO (because $\vartheta_{\nu} = 0$).  See the remarks about this on page 26 of \cite{BF}.

\section{Asymptotic pseudodifferential operators and their symbol calculus}\label{Asymptotic}

In this section we develop a symbol calculus for asymptotic pseudodifferential operators adapted to the foliation $F$.  To do this, we extend the operator $\theta^{\alpha}(p)$ defined over $M$ to an operator defined over $TF$.  This must be done with some care so that the crucial relationship given in Proposition \ref{ext} holds.  We then extend some of the material in \cite{BF} to our case, and for the sake of brevity, quote several results from that paper and refer the reader to it and its references for the proofs.  

Our basic problem is to compute the symbol in $SC^{\infty,\infty}(M,E)$ of the composition of two operators constructed out of two symbols in $SC^{\infty,\infty}(M,E)$,  and we want the formula to depend only on the two symbols, just as in Theorem \ref{comp}.  In addition, we want a way to be able to recover the symbol of an operator which comes from a symbol in $SC^{\infty,\infty}(M,E)$.   As the calculations in Section \ref{Sodo} make clear, the procedure we used there for polynomial symbols will not work in general.  The solution to this problem is to make the variable $\sigma$ correspond to a space variable, and it is based on a simple idea.   Consider the symbol $p(x,\xi,\sigma) = i^{|\alpha| + |\beta|}\xi^{\alpha} \sigma^{\beta}$ defined over $\R^n$,  which acts on functions on $\R^n$ as the differential operator $\pa/\pa  x^{\alpha +\beta}$, whose symbol is $i^{|\alpha| + |\beta|}\xi^{\alpha +\beta}$.  This is not what we want.   Replace $\R^n$ with $\R^n \times \R^n$ with coordinates $(x,y)$.  Now let $f(x)$ be a function on $\R^n$ and define the function $\what{f}(x,y) = f(x+y)$.  Then $p(x, \xi,\sigma)$ acts on $\what{f}$ as the differential operator $\pa/\pa  x^{\alpha}  \pa/\pa  y^{\beta}$, whose symbol is $p(x, \xi,\sigma)$, just what we want.  In addition,   $\pa/\pa  x^{\alpha}  \pa/\pa  y^{\beta}(\what{f})$  restricted to the first $\R^n$ (that is, set $y=0$) is  $\pa/ \pa  x^{\alpha +\beta}(f)$, the action of $p(x,\xi,\sigma)$ on functions on $\R^n$.

To proceed, we replace $M$ by the manifold $TF$, and we note that there are equivalences of bundles
$$
T(TF)   \simeq    \pi^*(TM \oplus TF ) \simeq  \pi^*(TF \oplus \nu \oplus TF) 
$$
and 
$$  
T^*(TF) \simeq  \pi^*(T^*M  \oplus T^*F) \simeq   \pi^*(T^*F \oplus \nu^* \oplus T^*F),
$$
where $\pi: TF \to M$ is the projection.  These depend on the choice of a transverse bundle to the bundle along the fibers of $TF$.

Set $\what{E} = \pi^*(E)$  and $\what{E}_{\cS} = \pi^*(\cS_{\nu} \otimes E)$, and denote by $\pi_T:T^*(TF) \to TF$ the projection.  
\begin{definition}
The symbol space $S^{m,\ell}(TF, \what{E}_{\cS})$ consists of all $p \in  C^{\infty}( T^*(TF), \pi^*_T(\End  \what{E}_{\cS_{\nu}}))$  so that for any multi-indices $\alpha$, $\beta$ and $\lambda$,  there is a constant $C_{\alpha,\beta,\lambda} > 0$ so that 
$$
||  \,   \pa^{\alpha}_{\xi} \pa^{\beta}_{\sigma}  \pa^{\lambda}_{x,X}p(x,X,\xi,\sigma) \, || \,\, \leq \,\, 
C_{\alpha,\beta,\lambda}(1 +  |\xi|)^{m- |\alpha|}(1 + |\sigma|)^{\ell-|\beta|}.
$$

The symbol space $SC^{m,\ell} (TF, \what{E}_{\cS})$ is 
$$
SC^{m,\ell} (TF, \what{E}_{\cS}) \,\, = \,\,   \sum_{k=0}^q  S^{m-k,\ell} (TF, \what{E}_{\cS}) \cap 
C^{\infty}(T^*(TF),  \pi_T^*(\wedge^k \pi^*(\nu^*) \otimes  \End(\what{E}))).
$$
Elements of $SC^{m,\ell} (TF, \what{E}_{\cS}) $ have grading $m + \ell$.

Set  $SC^{\infty,\infty}(TF, \what{E}_{\cS})=\bigcup_{m,\ell} SC^{m,\ell} (TF, \what{E}_{\cS})$ and 
$SC^{-\infty,-\infty}(TF, \what{E}_{\cS})=\bigcap_{m,\ell} SC^{m,\ell} (TF, \what{E}_{\cS})$.
\end{definition}

{{The topology on $SC^{m,\ell} (TF, \what{E}_{\cS})$, which is induced from  $S^{m,\ell}(TF, \what{E}_{\cS})$, is just the analog of the topology on $SC^{m,\ell} (M, E)$. }}  

For $ Z = (Z_1,Z_{\nu},Z_2) \in T(TF)_{(x,X)}$,  set $x' = \exp_{x}(Z_1+ Z_2,Z_{\nu})$ and
$X' = \mathcal{T}_{x,x'}(X + Z_2)$, where $\mathcal{T}_{x, x'}:TF_x \to TF_{x'}$ is the parallel translation of the bundle $TF$ along the geodesic $t \to  \exp_{x}(tZ_1 +tZ_2,tZ_{\nu})$ in $M$ from $x$ to $x'$.  Define  $\exp_{(x,X)}:T(TF)_{(x,X)} \to TF$ as 
$$
\exp_{(x,X)}(Z) \,\,=\,\,  (x', X') \,\,=\,\,( \exp_{x}(Z_1+ Z_2,Z_{\nu}),\mathcal{T}_{x,\exp_{x}(Z_1+ Z_2,Z_{\nu})}(X + Z_2) ).
$$
So $\exp_{(x,X)}(0) = (x,X)$, as it should.  

Next, define 
 $$
 \wtit{\alpha}((x,X), (x',X')) = \alpha(x,x')\alpha(x, \exp_{x}( \mathcal{T}^{-1}_{x,x'}(X') - X)),
 $$ for 
$(0,\mathcal{T}^{-1}_{x,x'}(X')-X)$ in the component of $(\pi,\exp)^{-1}(\Supp \, \alpha)$ which contains the zero section. Otherwise, $\wtit{\alpha}((x,X), (x',X')) = 0$.  So for $x'$ not close to $x$ or $\mathcal{T}^{-1}_{x,x'}(X')$  not close to $X$,  $ \wtit{\alpha}((x,X), (x',X'))=0$.  Note that 
$$
\wtit{\alpha}((x, X),\exp_{(x,X)}(Z)) = \alpha(x, \exp_{x}(Z_1+Z_2,Z_{\nu})) \alpha(x,\exp_{x}(Z_2,0)),
$$
which is non-zero only for $Z$ close to zero, and does not depend on $X$. 

Let $u$ be a section of \  $\what{E}_{\cS}$, and set
$$
\overline{u}_{(x,X)}(Z) \,\, = \,\,  \wtit{\alpha}((x, X),\exp_{(x,X)}(Z)) \ 
\mathcal{T}^{-1}_{x, \exp_{x}(Z_1+Z_2,Z_{\nu})} u(\exp_{(x,X)}(Z)),
$$
which is an element of $(\what{E}_{\cS})_{(x,X)} = (\cS_{\nu} \otimes E)_x$.  Given $p \in SC^{\infty,\infty}(TF, \what{E}_{\cS})$, define the operator
$\what{\theta}^{\alpha}(p)$ on $u$  to be
$$
\what{\theta}^{\alpha}(p)(u)(x,X) \,\, = \,\, (2 \pi )^{-n-p}   \int_{T^*(TF)_{(x,X)} \times 
T(TF)_{(x,X)}}                     \hspace{-2.5cm}
e^{-i\langle Z,(\xi,\sigma) \rangle}p(x,X,\xi ,\sigma) \overline{u}_{(x,X)}(Z)\,  dZ d\sigma d\xi. 
$$
Note that any element $p \in SC^{\infty,\infty}(M,E)$ determines an element $\what{p} \in SC^{\infty,\infty}(TF,\what{E}_{\cS})$ by
$$
\what{p}(x,X, \xi ,\sigma) \,\, = \,\, p(x, \xi ,\sigma).
$$
In addition, a section $u$ of  $\cS_{\nu} \otimes E$ determines the section $\what{u}$ of  $\what{E}_{\cS}$ by setting $\what{u}(x, X) = u(x)$.  Note that 
$$
\overline{\what{u}}_{(x,X)}(Z_1 - Z_2, Z_{\nu},Z_2) \,\, = \,\, \alpha(x, \exp_{x}(Z_2,0)) 
\overline{u}_{x}(Z_1 , Z_{\nu}).
$$

\begin{proposition}  \label{ext}
Suppose that $u$ is a section of $\cS_{\nu} \otimes E$,  and $p, q \in SC^{\infty,\infty}(M,E)$.  Then for all $(x,X) \in TF$,
$$
\theta^{\alpha}(p)(u)(x)   \,\, = \,\,  \what{\theta}^{\alpha}(\what{p})(\what{u})(x,X),
$$
and 
$$
\theta^{\alpha}(p)\theta^{\alpha}(q)(u)(x)   \,\, = \,\,  \what{\theta}^{\alpha}(\what{p}) \what{\theta}^{\alpha}(\what{q})(\what{u})(x,X).
$$
\end{proposition}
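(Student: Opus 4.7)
The plan is to establish part (1) by a direct change of variables in the oscillatory integral defining $\what\theta^{\alpha}(\what p)(\what u)(x, X)$, and then to deduce part (2) by applying part (1) twice. As a preliminary step I would unpack $\overline{\what u}_{(x,X)}(Z)$. Because $\what u(x',X') = u(x')$, the parallel transport factor in the definition of $\overline{\what u}_{(x,X)}$ reduces to $\mathcal{T}^{-1}_{x,\exp_x(Z_1+Z_2,Z_\nu)}$ acting on $u(\exp_x(Z_1+Z_2,Z_\nu))$ in the bundle $\cS_\nu \otimes E$. Combined with the explicit factorization
$$\wtit{\alpha}((x,X),\exp_{(x,X)}(Z)) \,=\, \alpha(x,\exp_x(Z_1+Z_2,Z_\nu))\,\alpha(x,\exp_x(Z_2,0))$$
recorded in the excerpt, this yields
$$\overline{\what u}_{(x,X)}(Z_1,Z_\nu,Z_2) \,=\, \alpha(x,\exp_x(Z_2,0))\,\overline{u}_x(Z_1+Z_2,Z_\nu),$$
so that the entire integrand of $\what\theta^{\alpha}(\what p)(\what u)(x,X)$ depends on $(x,X)$ only through $x$.

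Next I would perform the linear change of variables $X' := (Z_1+Z_2,Z_\nu) \in TF_x \oplus \nu_x = TM_x$ and $Y := Z_2 \in TF_x$, leaving $\xi = (\eta,\zeta)\in \nu^*_x \oplus T^*F_x$ and $\sigma \in T^*F_x$ fixed; this is a volume-preserving linear isomorphism from $T(TF)_{(x,X)} \times T^*(TF)_{(x,X)}$ onto $TM_x \times T^*M_x \times TF_x \times T^*F_x$. A direct computation gives
$$\langle Z,(\xi,\sigma)\rangle \,=\, \langle Z_1,\zeta\rangle + \langle Z_\nu,\eta\rangle + \langle Z_2,\sigma\rangle \,=\, \langle X',\xi\rangle + \langle Y,\sigma-\zeta\rangle,$$
which is precisely the phase in $\theta^{\alpha}(p)(u)(x)$. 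Since $\what p(x,X,\xi,\sigma) = p(x,\xi,\sigma)$ and the unpacked factor $\alpha(x,\exp_x(Z_2,0))\,\overline{u}_x(Z_1+Z_2,Z_\nu)$ transforms into $\alpha(x,\exp_x(Y))\,\overline{u}_x(X')$, the two integrands coincide and part (1) follows.

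For part (2), set $v := \theta^{\alpha}(q)(u)$, so $\what v(x,X) = v(x)$. Part (1) applied to $q$ gives
$$\what\theta^{\alpha}(\what q)(\what u)(x,X) \,=\, \theta^{\alpha}(q)(u)(x) \,=\, \what v(x,X),$$
so $\what\theta^{\alpha}(\what q)(\what u) = \what v$ as sections of $\what E_{\cS}$. Applying part (1) to $p$ and $v$ then yields
$$\what\theta^{\alpha}(\what p)\,\what\theta^{\alpha}(\what q)(\what u)(x,X) \,=\, \what\theta^{\alpha}(\what p)(\what v)(x,X) \,=\, \theta^{\alpha}(p)(v)(x) \,=\, \theta^{\alpha}(p)\theta^{\alpha}(q)(u)(x).$$
The main point requiring care is the bookkeeping between the two meanings of $\mathcal{T}$ (parallel translation on $TF$ inside $\exp_{(x,X)}$ versus parallel translation on $\cS_\nu \otimes E$ inside $\overline{\what u}$) together with the identifications $T(TF) \simeq \pi^*(TF \oplus \nu \oplus TF)$ and $T^*(TF) \simeq \pi^*(T^*F \oplus \nu^* \oplus T^*F)$; once these are fixed the argument is entirely a change of variables.
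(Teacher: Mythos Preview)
Your proof is correct and follows essentially the same approach as the paper. For part (1), your change of variables $(X',Y) = ((Z_1+Z_2,Z_\nu),Z_2)$ is the inverse of the paper's $Z_1 \mapsto Z_1 - Z_2$, and the phase computation matches exactly. For part (2), your argument of applying part (1) twice (first to $q,u$ to get $\what\theta^{\alpha}(\what q)(\what u) = \what v$, then to $p,v$) is a cleaner packaging of what the paper does: the paper writes out the double integral explicitly and then observes that the inner integral $\what\theta^{\alpha}(\what q)(\what u)(x',X')$ is independent of $X'$ and equals $\theta^{\alpha}(q)(u)(x')$, which is precisely your use of part (1).
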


\begin{proof}
First rewrite (we ignore the constant $(2 \pi )^{-n-p} $ throughout) $\theta^{\alpha}(p)(u)(x)$ as 
$$
\int_{TM_x \times T^*M_x \times TF_x \times T^*F_x} 
\hspace{-2.5cm}
e^{-i\langle  (Z_1,Z_{\nu}),\xi\rangle} e^{-i\langle Z_2, \sigma- \zeta \rangle}p(x,\xi ,\sigma)
\alpha(x, \exp_{x}(Z_2,0)) \alpha(x, \exp_{x}(Z_1,Z_{\nu}))
$$
$$ 
\mathcal{T}^{-1}_{x,\exp_{x}(Z_1,Z_{\nu})} u(\exp_{x}(Z_1,Z_{\nu}))\,   dZ d\sigma d\xi,
$$
where recall $\xi = (\eta,\zeta)$.
Now
$$
\what{\theta}^{\alpha}(\what{p})(\what{u})(x,X) \,\, = \,\,   \int_{TM_x \times T^*M_x \times 
TF_x \times T^*F_x} \hspace{-2.5cm}
e^{-i\langle  (Z_1,Z_{\nu}),\xi\rangle} e^{-i\langle Z_2, \sigma \rangle}p(x,\xi ,\sigma) \overline{\what{u}}_{(x,X)}(Z)\, dZ  d\sigma d\xi
\,\, = \,\,
$$
$$
\int_{TM_x \times T^*M_x \times 
TF_x \times T^*F_x} \hspace{-2.5cm}
e^{-i\langle  (Z_1,Z_{\nu}),\xi\rangle} e^{-i\langle Z_2, \sigma \rangle}p(x,\xi ,\sigma) 
\alpha(x,\exp_{x}(Z_2,0))   \alpha(x,\exp_{x}(Z_1 + Z_2,Z_{\nu}))
$$
$$ 
\mathcal{T}^{-1}_{x,\exp_{x}(Z_1 + Z_2,Z_{\nu})} u(\exp_{x}(Z_1+Z_2,Z_{\nu}))\,  dZ d\sigma d\xi.
$$
The first result then follows by making the change of coordinates $Z_1 \to Z_1 - Z_2$.

For the second, we use the formula obtained from the change of coordinates $Z_1 \to Z_1 - Z_2$ to get
$$
\what{\theta}^{\alpha}(\what{p})\what{\theta}^{\alpha}(\what{q})(\what{u})(x,X) \,\, = \,\,
\int_{TM_x \times T^*M_x \times 
TF_x \times T^*F_x} \hspace{-2.5cm}
e^{-i\langle  (Z_1,Z_{\nu}),\xi\rangle} e^{-i\langle Z_2, \sigma- \zeta \rangle}p(x,\xi ,\sigma) 
\alpha(x,\exp_{x}(Z_2,0))   \alpha(x,\exp_{x}(Z_1,Z_{\nu}))
$$
$$ 
\mathcal{T}^{-1}_{x,\exp_{x}(Z_1,Z_{\nu})}\what{\theta}^{\alpha}(\what{q})(\what{u})(x',X')\,   dZ  d\sigma d\xi,
$$
where $(x',X') = (\exp_{x}(Z_1,Z_{\nu}), \mathcal{T}_{x,\exp_{x}(Z_1,Z_{\nu})}(X + Z_2))$. Now
$$
\what{\theta}^{\alpha}(\what{q})(\what{u})(x',X') \,\, = \,\,
  \int_{TM_{x'} \times T^*M_{x'} \times 
TF_{x'} \times T^*F_{x'}} \hspace{-3.5cm}
e^{-i\langle  (Y_1,Y_{\nu}),\xi' \rangle} e^{-i\langle Y_2, \sigma'- \zeta' \rangle}q(x',\xi' ,\sigma') 
\alpha(x',\exp_{x'}(Y_2,0))   \alpha(x',\exp_{x'}(Y_1,Y_{\nu}))
$$
$$ 
\mathcal{T}^{-1}_{x',\exp_{x'}(Y_1,Y_{\nu})} {u}(\exp_{x'}(Y_1,Y_{\nu}))\,    dY d\sigma' d\xi'.
$$
Substituting this in the expression for $\what{\theta}^{\alpha}(\what{p})\what{\theta}^{\alpha}(\what{q})(\what{u})(x,X)$ and comparing the result with $\theta^{\alpha}(p)\theta^{\alpha}(q)(u)(x)$ immediately gives the second result.
\end{proof}

\begin{definition} 
A family {{$p(t) \in SC^{m,\ell}(M,E)$, $t \in \R$,}} is an asymptotic symbol if there are symbols $p_k$, of grading $m + \ell -k$ {{and independent of $t$}}, so that the following asymptotic expansion holds as $t \to 0$,
$$
p(t) \,\, \sim \,\, \sum_{k=0}^{\infty} t^{k} p_k.
$$
The leading symbol of $p(t)$ is $p_0$.  
There is an obvious extension of this definition to $p(t) \in SC^{m,\ell}(TF,\what{E}_{\cS})$.
\end{definition}

Note that  $p(t) \,\, \sim \,\, \sum_{k=0}^{\infty} t^{k} p_k$ means that given any $N > 0$, 
$$
\lim_{t \to 0} t^{-N}\Big( p(x,\xi,\sigma,t) \,\, - \,\, \sum_{k=0}^{N} t^{k} p_k(x,\xi,\sigma) \Big) \,\, = \,\, 0 
$$
in the space of symbols of grading $m+\ell - N-1$.   That is, we write $p(t) - \sum_{k=0}^{N} t^{k} p_k = \sum p_{k_1,k_2}$, where $k_1 + k_2 = N+1$,  $p_{k_1,k_2}  \in SC^{m-k_1,\ell-k_2}(M,E)$, and $\lim_{t \to 0} t^{-N}p_{k_1,k_2}=0 $ in $SC^{m-k_1,\ell-k_2}(M,E)$.  It does not imply that $\sum_{k=0}^{\infty} t^{k} p_k(x,\xi,\sigma)$ converges.

We identify two asymptotic symbols $p$ and $q$ which have the same asymptotic expansion, and write $p \sim q$.

\medskip
Note that if $p_0 \in SC^{m,\ell}(M,E)$ is any symbol, then $p(t) = p_0$ is an asymptotic symbol.  Similarly for $p_0 \in SC^{m,\ell}(TF,\what{E}_{\cS})$.  The following lemma is standard.

\begin{lemma}
Let $p_{n_1,n_2} \in SC^{m-n_1,\ell - n_2}(M,E)$, $n_1,n_2 = 0,1,2,...$.  Then there is an asymptotic symbol
$p(t) \in  SC^{m',\ell'}(M,E)$, for all $m',\ell' > m,\ell$,  such that  $\dd p(t) \sim \sum^{\infty}_{n_1+n_2=0}  \hspace{-0.3cm}t^{n_1 + n_2} p_{n_1,n_2}$.  
\end{lemma}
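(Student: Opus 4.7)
The plan is a Borel-summation argument tailored to the bi-filtered symbol spaces $SC^{*,*}(M,E)$. Fix a smooth cutoff $\chi \in C^{\infty}(\R,[0,1])$ with $\chi \equiv 1$ on $[-1/2,1/2]$ and $\chi \equiv 0$ outside $[-1,1]$. For a sequence of positive reals $\epsilon_{n_1,n_2}$ to be chosen, set
\begin{equation*}
p(x,\xi,\sigma,t) \,\,=\,\, \sum_{n_1,n_2 \geq 0} \chi(t/\epsilon_{n_1,n_2}) \, t^{n_1+n_2} \, p_{n_1,n_2}(x,\xi,\sigma).
\end{equation*}
Each term is supported in $|t| \leq \epsilon_{n_1,n_2}$, and as $t \to 0$ every cutoff eventually equals $1$, so the formal expansion at $t=0$ will reproduce $\sum t^{n_1+n_2} p_{n_1,n_2}$.

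Next I would choose the scales $\epsilon_{n_1,n_2}$ recursively, in increasing order of $n_1+n_2$, so that for every triple of multi-indices $(\alpha,\beta,\lambda)$ with $|\alpha|+|\beta|+|\lambda| \leq n_1+n_2$, the seminorm of $p_{n_1,n_2}$ in $SC^{m-n_1,\ell-n_2}(M,E)$ satisfies
\begin{equation*}
\epsilon_{n_1,n_2}^{n_1+n_2}\,\rho_{\alpha,\beta,\lambda}(p_{n_1,n_2}) \,\,\leq\,\, 2^{-(n_1+n_2)}.
\end{equation*}
This is possible since the seminorms of $p_{n_1,n_2}$ are finite and $\epsilon_{n_1,n_2}$ is at our disposal. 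Because $m' > m$ and $\ell' > \ell$, the canonical inclusion $SC^{m-n_1,\ell-n_2}(M,E) \hookrightarrow SC^{m',\ell'}(M,E)$ is continuous with norm bounded by $1$ on each seminorm, and since the factor $\chi(t/\epsilon_{n_1,n_2})$ involves no spatial variable, the $SC^{m',\ell'}$-seminorm of $\chi(t/\epsilon_{n_1,n_2})\,t^{n_1+n_2}\,p_{n_1,n_2}$ is dominated by $|t|^{n_1+n_2}\rho_{\alpha,\beta,\lambda}(p_{n_1,n_2})$, which is bounded by $2^{-(n_1+n_2)}$ on the support of the cutoff for all but finitely many $(n_1,n_2)$. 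The partial sums therefore converge in every seminorm, so $p(t) \in SC^{m',\ell'}(M,E)$ for each $t$.

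For the asymptotic expansion, fix $N$ and take $t$ so small that $|t| < \epsilon_{n_1,n_2}/2$ for the finitely many pairs with $n_1+n_2 \leq N$; for such $t$ every low-order cutoff equals $1$, so
\begin{equation*}
p(t) \,\,-\,\, \sum_{n_1+n_2 \leq N} t^{n_1+n_2} p_{n_1,n_2} \,\,=\,\, \sum_{n_1+n_2 > N} \chi(t/\epsilon_{n_1,n_2}) \, t^{n_1+n_2} \, p_{n_1,n_2}.
\end{equation*}
On the support of each remaining cutoff one has $|t^{n_1+n_2}| \leq |t|^{N+1}\,\epsilon_{n_1,n_2}^{n_1+n_2-N-1}$; combining this with the seminorm estimate from the previous step and regrouping by pairs $(k_1,k_2)$ with $k_1+k_2=N+1$ shows that the remainder sits in $\sum_{k_1+k_2=N+1} SC^{m-k_1,\ell-k_2}(M,E)$ and is $o(|t|^N)$ as $t \to 0$, exactly as the definition of asymptotic symbol requires.

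The main obstacle is purely the combinatorial bookkeeping forced by the bigrading: one must arrange the $\epsilon_{n_1,n_2}$ so that each tail contribution is controlled in its natural space $SC^{m-n_1,\ell-n_2}(M,E)$, and then reassemble the diagonal sums $\sum_{n_1+n_2=k} p_{n_1,n_2}$ to match the single-index form of the asymptotic expansion. No analytic input beyond the classical Borel lemma is required; the two-index filtration merely demands careful indexing.
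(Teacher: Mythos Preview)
Your approach—a Borel-type summation with cutoffs $\chi(t/\epsilon_{n_1,n_2})$ depending only on $t$—is sound and is a genuine alternative to the paper's construction, which instead uses cutoffs $\varphi\big(\epsilon_n(t^{-2}+|\xi|^2+|\sigma|^2)\big)$ depending on all of $(t,\xi,\sigma)$. The paper's cutoff is identically $1$ for $|\xi|$ or $|\sigma|$ large, so its symbol estimates reduce to a modification of the $p_{n_1,n_2}$ on compact $(\xi,\sigma)$-sets; your cutoff is conceptually simpler but places all the burden on the choice of the $\epsilon_{n_1,n_2}$. Either route yields the lemma.

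There is, however, a quantitative slip in your remainder estimate. With your condition $\epsilon_{n_1,n_2}^{\,n_1+n_2}\,\rho(p_{n_1,n_2})\le 2^{-(n_1+n_2)}$ and the bound $|t|^{n}\le |t|^{N+1}\epsilon^{\,n-N-1}$ on the support of the cutoff, the coefficient of $|t|^{N+1}$ in the $n$-th tail term is $\epsilon^{\,n-N-1}\rho\le 2^{-n}\epsilon^{-N-1}$, and the sum $\sum_{n>N}(n+1)\,2^{-n}\epsilon_{n}^{-N-1}$ need not converge: if for instance $\rho(p_n)=2^{n^2}$, your condition forces $\epsilon_n\le 2^{-1-n}$, and one checks directly that $t^{-N}$ times the tail blows up as $t\to 0$ for $N\ge 3$. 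The fix is to impose the \emph{linear} condition $\epsilon_{n_1,n_2}\le 1$ together with $\epsilon_{n_1,n_2}\,\rho_{\alpha,\beta,\lambda}(p_{n_1,n_2})\le 2^{-(n_1+n_2)}$ for $|\alpha|+|\beta|+|\lambda|\le n_1+n_2$. Then on the support $|t|^{\,n}\rho\le\epsilon^{\,n}\rho\le\epsilon\,\rho\le 2^{-n}$, which still gives $p(t)\in SC^{m',\ell'}$; and for $n\ge N+2$ one has $|t|^{\,n-N}\rho\le|t|\cdot\epsilon^{\,n-N-1}\rho\le|t|\cdot\epsilon\,\rho\le|t|\,2^{-n}$, so $t^{-N}$ times the tail is bounded by $|t|$ times a convergent series plus the finitely many $n=N+1$ terms (each $O(|t|)$), hence is $o(1)$ in the required bigraded space. (Minor additional point: for $(n_1,n_2)=(0,0)$ your inequality reads $\rho(p_{0,0})\le 1$, which is not at your disposal; simply drop the cutoff on that single term.)
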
 

\begin{proof}  This is a reasonably standard result, but it does require some care.    In particular,  choose a smooth non-decreasing function $\varphi$ on $\R$ with $\varphi(x) = 0$ for $x <1$, and $\varphi(x)=1$ for $x>2$.    Choose a decreasing sequence  $\ep_j $, with limit $0$.   Set
$$
p(x,\xi,\sigma,t) = \sum^{\infty}_{n=0} \hspace{0.1cm}\sum_{n_1+n_2=n}   t^{n} \varphi(\ep_{n}(t^{-2} + |\xi|^2 + |\sigma|^2))
 p_{n_1,n_2}(x, \xi, \sigma).
$$
For fixed $\xi$, $\sigma$ and $t$, this is actually a finite sum, so it converges. 
 
Next we show that $p(t) \in  SC^{m',\ell'}(M,E)$, for all $m',\ell' > m,\ell$.  So $t$ is now fixed, but not $x$, $\xi$ or $\sigma$.  Fix multi-indices $\alpha$, $\beta$ and $\lambda$.   Then, since $\varphi = 1$ for $| \xi |^2$ or $| \sigma |^2$ sufficiently large, 
$$
||  \,   \pa^{\alpha}_{\xi} \pa^{\beta}_{\sigma}  \pa^{\lambda}_{x}\varphi(\ep_{n_1 + n_2} (t^{-2} +|\xi|^2  +|\sigma|^2))                        p_{n_1,n_2}(x, \xi, \sigma)\, || \,\, \leq \,\, 
C_{n_1,n_2,\alpha, \beta,\lambda}(1 + |\xi|)^{m-n_1- |\alpha|}(1 + |\sigma|)^{\ell-n_2 - |\beta|}
 \,\, = \,\,
$$
$$
C_{n_1,n_2,\alpha, \beta,\lambda}(1 + |\xi|)^{m'-n_1 - |\alpha|}(1 + |\sigma|)^{\ell'- n_2 -|\beta|}
(1 + |\xi|)^{m - m'}(1 + |\sigma|)^{\ell - \ell'}.
$$
As we identify two asymptotic symbols if they have the same asymptotic expansion, and since symbols which are of {{uniformly fiberwise}}  compact $\xi,\sigma$ support are in $SC^{-\infty,-\infty}(M,E)$, we may assume that $p_{n_1, n_2} = 0$ on the compact set where
$$
C_{n_1,n_2,\alpha, \beta,\lambda}(1 + |\xi|)^{m - m'}(1 + |\sigma|)^{\ell - \ell'} \geq \frac{1}{(n_1 + n_2+1)!}.
$$
Then we have
$$
||  \,   \pa^{\alpha}_{\xi} \pa^{\beta}_{\sigma}  \pa^{\lambda}_{x}p(x,\xi,\sigma,t) \, || \,\, \leq \,\, 
 \sum_{n=0}^{\infty}  \frac{t^n}{n!}  (1 + |\xi|)^{m'- |\alpha|}(1 + |\sigma|)^{\ell'-|\beta|}  \,\, = \,\, 
e^{t}(1 + |\xi|)^{m' - |\alpha|}(1 + |\sigma|)^{\ell'-|\beta|},
$$
so $p(t) \in SC^{m',\ell'}(M,E)$, for all $m',\ell' > m,\ell$.

Now $\dd p(t) \sim  \sum^{\infty}_{n=0} \hspace{0.1cm}\sum_{n_1+n_2=n}t^n p_{n_1,n_2}$, since if $t < \sqrt{\ep_N/2}$, then $\varphi(\ep_n(t^{-2} + |\xi|^2  + |\sigma|^2) = 1$ for $n = 0,..., N$, so
$$
t^{-N}\Big( p(x,\xi,\sigma,t) \,\, - \sum^{N}_{n=0} \hspace{0.1cm}\sum_{n_1+n_2=n} \hspace{-0.3cm}  t^n p_{n_1,n_2}(x,\xi,\sigma ) \Big) \,\, = \,\,
$$
$$ 
t \Big[ \sum^{\infty}_{n=N+1} \hspace{0.1cm}\sum_{n_1+n_2=n} t^{n -(N+1)} \varphi(\ep_n(t^{-2} + |\xi|^2  + |\sigma|^2))p_{n_1,n_2}(x, \xi, \sigma) \Big].
$$ 
\end{proof}

Note that  $SC^{m,\ell}(M,E) \neq \cap_{m',\ell'>m,\ell} SC^{m',\ell'}(M,E)$.   A simple counterexample is given by the function $\sigma^2 \ln(1 + \sigma^2)$ on $\R^2$ (with coordinates $(\xi,\sigma)$), which  is in $S^{0,\ell'}$ for all $\ell' > 2$, but is not in $S^{0,2}$.

\medskip
The notions of asymptotically zero and equivalence of families of operators (Definitions 3.4 and 3.5 of \cite{BF}) translate directly to our situation.

{{{{Recall that smoothing operators are operators with smooth $C^\infty$-bounded uniformly supported Schwartz kernels.   These are called uniform smoothing operators in Appendix \ref{Bifiltered} and the support condition fits with Roe's definition of locally compact operators \cite{RoeBook}, see also \cite{BR}. Notice that smoothing operators are sometimes defined in the literature as those operators which  extend to  bounded operators between any  Sobolev spaces, without condition on the support, so as to include more general Schwartz functional calculus on elliptic operators. The reason we insist on uniform support is because we want them to furnish ideals in our uniformly supported bifiltered pseudodifferential  calculus. }}
 }}
\begin{definition}
A family of smoothing  operators $P_t$ on sections of $\cS_{\nu} \otimes E$ or $\what{E}_{\cS}$ is asymptotically zero if given any $N \geq 0$,  for all $s,k$,
$$
\lim_{t \to 0} t^{-N}|| P_t ||_{s,k} \,\, = \,\, 0.
$$ 
Two families of operators $P_t, Q_t$  are equivalent, written $P_t \sim Q_t$, if their difference is asymptotically zero. 
\end{definition}
Here, $|| P_t ||_{s,k}$ is the norm of $P_t$ as an operator from the usual $s$ Sobolev space associated to  $\cS_{\nu} \otimes E$ or $\what{E}_{\cS}$ to the usual $k$ Sobolev space. Recall that our manifold, as well as all the bundles we use, has bounded geometry, hence the Sobolev spaces are perfectly well defined on $M$ and on the total space $TF$, see for instance \cite{Shubin}.

\begin{definition}
{{Suppose that the family $p(t) \in SC^{\infty,\infty} (M, E)$ is smooth in $t$.  For $t > 0$, the  rescaling of $p(t)$, (denoted $p_t$, $p(t)_t$, or $p(x,\xi ,\sigma,t)_t$), is defined as follows.   It is the linear operator which, for 
$p(t) \in S^{m-k,\ell} (M, E) \cap 
C^{\infty}(T^*M   \oplus  T^*F,  \pi^*(\wedge^k \nu^* \otimes  \End(E)))$,  is given by
$$
p(x,\xi ,\sigma,t)_t  \,\, = \,\, t^{k} p(x,t\xi ,t\sigma,t).
$$
}}
\end{definition}

Again, there is an obvious extension of this to $SC^{\infty,\infty} (TF, \what{E}_{\cS})$.

\begin{definition}
An asymptotic pseudodifferential operator (A$\Psi$DO) is a family of operators $P_t$ on sections of $\cS_{\nu} \otimes E$ so that there is an asymptotic symbol $p(t) \in SC^{\infty,\infty}(M,E)$, with $P_t \, \sim \, \theta^{\alpha}(p(t)_t)$.

{{If $p(t) \,\, \sim \,\, \sum_{k=0}^{\infty} t^{k} p_k$, 
the leading symbol of $P_t$ is the symbol $p_0$.}}

Similarly for operators on $\what{E}_{\cS}$.
\end{definition}
We make no distinction between $P_t$ and its equivalence class.

\begin{definition}\label{gensym}
Given an operator $P$ on sections of $\cS_{\nu} \otimes E$ or $\what{E}_{\cS}$, its symbol $\varsigma (P)$ is defined as follows. Let $(x, X) \in TF$  and $(\xi, \sigma)  \in   T^*(TF)_{(x, X)} = T^*M_x \times T^*F_x$, and $u_{(x, X)} \in (\what{E}_{\cS})_{(x, X)}  =    (\cS_{\nu} \otimes E)_x$.    Set 
$$
\varsigma(P)(x, X, \xi,\sigma)(u_{(x, X)}) \,\, = \,\,  
P \Big((x',X') \mapsto e^{i\langle  \exp^{-1}_{(x,X)}(x',X'),(\xi,\sigma)\rangle}
\wtit{\alpha}((x,X),(x',X'))\mathcal{T}_{x,x'}(u_{(x, X)})\Big) \, |_{(x', X') =(x, X)}.
$$
\end{definition}

\begin{lemma}\label{simOps}
{{If  a family of smoothing operators $P_t$ on sections of $\cS_{\nu} \otimes E$ or $\what{E}_{\cS}$ satisfies $P_t \sim 0$, }} then  $\varsigma(P_t)\sim 0$.  
\end{lemma}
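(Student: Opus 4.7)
The plan is to express $\varsigma(P_t)(x,X,\xi,\sigma)$ as $P_t$ applied to an explicit oscillatory test section, and then to bound this via Sobolev embedding, the operator norms $\|P_t\|_{s,k}$, and explicit Sobolev estimates on oscillatory functions with compactly supported amplitude.

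Fix $(x,X)\in TF$, $(\xi,\sigma)\in T^*M_x\oplus T^*F_x$, and $u_{(x,X)}\in(\cS_\nu\otimes E)_x$. Writing $Z=\exp^{-1}_{(x,X)}(x',X')$, set
$$\phi_{(x,X),(\xi,\sigma)}(x',X') \;=\; e^{i\langle Z,(\xi,\sigma)\rangle}\,\wtit{\alpha}((x,X),(x',X'))\,\mathcal{T}_{x,x'}(u_{(x,X)}),$$
so that $\varsigma(P_t)(x,X,\xi,\sigma)(u_{(x,X)}) = [P_t\,\phi_{(x,X),(\xi,\sigma)}](x,X)$. Because $\wtit{\alpha}$ is supported in a fixed compact neighborhood of the diagonal, $\phi_{(x,X),(\xi,\sigma)}$ has support contained in a uniformly sized compact set around $(x,X)$, and differentiating in $(\xi,\sigma)$ only brings down polynomial factors in $Z$, which are uniformly bounded on the support of $\wtit\alpha$. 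Differentiating in $(x,X)$ affects both the evaluation point and the parameters of $\phi$, but by the same mechanism produces a finite sum of test sections $\psi$ of the same oscillatory form (with possibly higher $C^k$-bounded amplitudes depending on $\lambda$) and a finite sum of lower-order derivatives of $P_t\psi$ evaluated at $(x,X)$.

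The key quantitative input is the following Sobolev estimate: for any real $s$, and for $\psi=(\text{bounded compactly supported amplitude})\cdot e^{i\langle Z,(\xi,\sigma)\rangle}$, one has, uniformly in $(x,X)$ thanks to bounded geometry,
$$\|\psi\|_{H^{s}} \;\leq\; C_{s}\,(1+|\xi|+|\sigma|)^{s}.$$
Combining this with Sobolev embedding (to pass from pointwise values of $P_t\psi$ at $(x,X)$ to an $H^{s_0}$ norm for $s_0$ large enough) gives, for any integers $N,k \geq 0$ and any multi-indices $\alpha,\beta,\lambda$,
$$\Bigl\|\partial_\xi^\alpha \partial_\sigma^\beta \partial_{x,X}^\lambda \varsigma(P_t)(x,X,\xi,\sigma)\Bigr\| \;\leq\; C_{\alpha,\beta,\lambda,N,k}\, \|P_t\|_{-N,\, k+s_0}\,(1+|\xi|+|\sigma|)^{-N},$$
the factor $(1+|\xi|+|\sigma|)^{-N}$ coming from the bound $\|\psi\|_{H^{-N}}\leq C_N(1+|\xi|+|\sigma|)^{-N}$ applied after integrating the operator against $\phi$.

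Since $P_t\sim 0$, each operator norm $\|P_t\|_{-N,\,k+s_0}$ decays faster than any power of $t$. Splitting the $(1+|\xi|+|\sigma|)^{-N}$ as $(1+|\xi|)^{-N_1}(1+|\sigma|)^{-N_2}$ and choosing $N_1,N_2$ large enough shows that for every $m,\ell$ and every semi-norm $\rho_{\alpha,\beta,\lambda}$ of $SC^{m,\ell}(TF,\what{E}_{\cS})$ (respectively $SC^{m,\ell}(M,E)$), we have $t^{-M}\rho_{\alpha,\beta,\lambda}(\varsigma(P_t))\to 0$ as $t\to 0$ for every $M$. This is the definition of $\varsigma(P_t)\sim 0$. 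The main obstacle is to obtain the above estimates uniformly in the base point $(x,X)$, which is where the bounded geometry hypothesis and the uniform support condition built into our notion of smoothing operator (see Appendix \ref{Bifiltered}) are essential: they guarantee uniform control on the volume of the support of $\wtit\alpha$, on the constants in Sobolev embedding over normal charts, and on the Fourier transforms of the cutoff amplitudes that enter the estimate on $\|\psi\|_{H^s}$.
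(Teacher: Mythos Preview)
Your argument is correct and follows essentially the same strategy as the paper: write $\varsigma(P_t)$ as $P_t$ applied to an explicit compactly supported oscillatory section, then estimate the pointwise value using the Sobolev operator norms $\|P_t\|_{s,k}$ together with bounded geometry. The paper phrases the pointwise evaluation dually, pairing $P_t(\wtit u)$ against Dirac sections $\delta^{x,X}_{u_i}$ with uniformly bounded $\|\cdot\|_{-k}$ norm and using only $\|\wtit u\|_0\leq C$, whereas you use Sobolev embedding and the estimate $\|\psi\|_{H^{-N}}\leq C_N(1+|\xi|+|\sigma|)^{-N}$ on the oscillatory side. These are equivalent devices; your version has the mild bonus of extracting decay in $(\xi,\sigma)$, which the paper does not record (it simply obtains uniform bounds), but this extra strength is not needed for the lemma. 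One small point: the splitting $(1+|\xi|+|\sigma|)^{-N}\leq (1+|\xi|)^{-N_1}(1+|\sigma|)^{-N_2}$ is not literal but follows by choosing $N\geq N_1+N_2$ and using $(1+|\xi|+|\sigma|)\geq\max(1+|\xi|,1+|\sigma|)$.
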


\begin{proof}
We do the proof for operators on sections of $\what{E}_{\cS}$, as the proof for  $\cS_{\nu} \otimes E$ is identical.

Suppose that  $P_t \sim 0$.  Given $u \in (\what{E}_{\cS})_{(x,X)}$, set 
$$
\wtit{u} (x',X')
\,\, = \,\,  
e^{i\langle  \exp^{-1}_{(x,X)}(x',X'),(\xi, \sigma) \rangle}
\wtit{\alpha}((x,X),(x',X')) \mathcal{T}_{x,x'}(u).
$$
Denote by $u_1,u_2, \ldots$ an orthonormal basis of $(\what{E}_{\cS})_{(x,X)}$.
Then
$$
\lim_{t \to 0}t^{-N} ||\varsigma(P_t)(x, X, \xi, \sigma)|| \,\, = \,\, 
\lim_{t \to 0}t^{-N}\sup_{||u|| = 1} ||\varsigma(P_t)(x, X,\xi, \sigma)(u)|| \,\, = \,\, 
\lim_{t \to 0}t^{-N}\sup_{||u|| = 1} ||P_t(\wtit{u} )(x,X)|| \,\, = \,\, 
$$
$$ 
\lim_{t \to 0}t^{-N}\sup_{||u|| = 1}\Big[\sum_i |\langle  P_t(\wtit{u} )(x,X), u_i\rangle|^2 \Big]^{1/2} \,\, = \,\, 
\lim_{t \to 0}t^{-N}\sup_{||u|| = 1}\Big[\sum_i |\langle  P_t(\wtit{u} ), \delta^{x,X}_{u_i}\rangle|^2 \Big]^{1/2}, 
$$
where $ \delta^{x,X}_{u_i}$ is the Dirac delta section with value $u_i$ at $(x,X)$.  Because of the bounded geometry of our situation, the $-k$ Sobolev norm $|| \delta^{x,X}_{u_i}||_{-k}$ is uniformly bounded,  provided $k$ is sufficiently large.  In addition, the sections $\wtit{u}$, where $||u|| = 1$, have $||\wtit{u}||_{0}$ uniformly bounded.  Then we have,
$$
\lim_{t \to 0}t^{-N}\sup_{||u|| = 1}\Big[\sum_i |\langle  P_t(\wtit{u} ), \delta^{x,X}_{u_i}\rangle|^2 \Big]^{1/2} \,\, \leq \,\,
\lim_{t \to 0}t^{-N}\sup_{||u|| = 1}\Big[\sum_i \Big( ||P_t||_{0,k} ||\wtit{u}||_0  || \delta^{x,X}_{u_i}||_{-k}\Big)^2 \Big]^{1/2} \,\,=\,\, 0.
$$

To estimate the norms of the derivatives of $\varsigma(P_t)$, we may proceed in a similar fashion, utilizing the Schwartz kernel $K_t$ of $P_t$.  In particular, 
$$
\lim_{t \to 0}t^{-N} || \pa^{\alpha}_{\xi} \pa^{\beta}_{\sigma} \pa^{\lambda}_{x,X}\varsigma(P_t)(x, X, \xi, \sigma)  || \,\, = \,\, 
\lim_{t \to 0}t^{-N} \sup_{||u|| = 1} || \pa^{\alpha}_{\xi} \pa^{\beta}_{\sigma} \pa^{\lambda}_{x,X}P_t(\wtit{u})(x,X)|| \,\, = \,\,
$$$$
\lim_{t \to 0}t^{-N} \sup_{||u|| = 1} || \pa^{\alpha}_{\xi} \pa^{\beta}_{\sigma} \pa^{\lambda}_{x,X} \int  K_t((x,X),(x',X'))\wtit{u} (x',X') dX' dx'||\,\, = \,\,
$$$$
\lim_{t \to 0}t^{-N} \sup_{||u|| = 1} ||  \int  \pa^{\lambda}_{x,X}K_t((x,X),(x',X')) (\pa^{\alpha}_{\xi} \pa^{\beta}_{\sigma} \wtit{u}) (x',X') dX' dx'||\,\, = \,\,
$$$$
\lim_{t \to 0}t^{-N} \sup_{||u|| = 1} ||   \pa^{\lambda}_{x,X}P_t(\pa^{\alpha}_{\xi} \pa^{\beta}_{\sigma} \wtit{u}) (x,X) ||  \,\, = \,\, 
\lim_{t \to 0}t^{-N}\sup_{||u|| = 1}\Big[\sum_i |\langle  \pa^{\lambda}_{x,X}P_t(\pa^{\alpha}_{\xi} \pa^{\beta}_{\sigma}  \wtit{u}), \delta^{x,X}_{u_i}\rangle|^2 \Big]^{1/2}    \,\, = \,\, 
$$$$
\lim_{t \to 0}t^{-N}\sup_{||u|| = 1}\Big[\sum_i |\langle  P_t(\pa^{\alpha}_{\xi} \pa^{\beta}_{\sigma} \wtit{u}), \pa^{\lambda}_{x,X}\delta^{x,X}_{u_i}\rangle|^2 \Big]^{1/2}.  
$$
As above, for large enough $k$,  the Dirac delta sections $\pa^{\lambda}_{x,X}\delta^{x,X}_{u_i}$ have $|| \pa^{\lambda}_{x,X}\delta^{x,X}_{u_i} ||_{-k}$ uniformly bounded, and $|| \pa^{\alpha}_{\xi} \pa^{\beta}_{\sigma} \wtit{u}) ||_0$ is uniformly bounded.    It follows immediately that
$$
\lim_{t \to 0}t^{-N} || \pa^{\alpha}_{\xi} \pa^{\beta}_{\sigma}\pa^{\lambda}_{x,X}\varsigma(P_t)(x, X, \xi, \sigma)  || \,\, = \,\, 0.
$$
\end{proof}

The proof of the following lemma  depends on Lemma \ref{BFresults2} and Theorem \ref{mainlemma}, and so is deferred to the appendix.
\begin{lemma}\label{simOps2}
 If a family of symbols $p(t) \in SC^{-\infty,-\infty}(TF,\what{E}_{\cS})$  and  $p(t) \sim 0$, then $P_t =\what{\theta}^{\alpha}(p(t)_t)\sim 0$.  The same result holds for $p(t) \in SC^{-\infty,-\infty}(M,E)$.
\end{lemma}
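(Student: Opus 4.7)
The plan is to transfer the rapid vanishing of the family $p(t)$ at the symbol level into rapid vanishing of $P_t$ at the operator-norm level. First I would reduce the statement for $SC^{-\infty,-\infty}(M,E)$ to the one for $SC^{-\infty,-\infty}(TF, \what{E}_{\cS})$ by means of Proposition \ref{ext}: since $\theta^{\alpha}(p)(u)(x) = \what{\theta}^{\alpha}(\what{p})(\what{u})(x,X)$ and the lift $u \mapsto \what{u}$, combined with restriction to the zero section, is compatible with Sobolev norms (thanks to bounded geometry), a rapid-decay Sobolev estimate for $\what{\theta}^{\alpha}(\what{p}(t)_t)$ immediately yields the analogous estimate for $\theta^{\alpha}(p(t)_t)$.

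Next I would translate the hypothesis into a concrete seminorm statement. The condition $p(t) \sim 0$ in $SC^{-\infty,-\infty}$ means that for every $N \geq 0$, every $(m',\ell')$, and every triple of multi-indices $(\alpha,\beta,\lambda)$, the seminorm $\rho_{\alpha,\beta,\lambda}^{(m',\ell')}(p(t))$ vanishes faster than $t^N$ as $t\to 0$. Writing $p(t) = \sum_{k=0}^q p^{(k)}(t)$ with $p^{(k)}(t)$ the form-degree-$k$ summand and differentiating the rescaling $p^{(k)}(t)_t(x,X,\xi,\sigma) = t^k p^{(k)}(t)(x,X,t\xi,t\sigma)$ by the chain rule, one obtains
\[
\bigl\|\pa^{\alpha}_{\xi} \pa^{\beta}_{\sigma} \pa^{\lambda}_{x,X} p(t)_t\bigr\| \leq \sum_{k=0}^q t^{k+|\alpha|+|\beta|}\, \rho_{\alpha,\beta,\lambda}^{(m'-k,\ell')}(p^{(k)}(t))\, (1+t|\xi|)^{m'-k-|\alpha|}(1+t|\sigma|)^{\ell'-|\beta|}.
\]
Taking $m',\ell'$ arbitrarily negative (legitimate because $p(t) \in SC^{-\infty,-\infty}$), the polynomial factors in $(\xi,\sigma)$ can be absorbed uniformly, and the remaining seminorm factor vanishes faster than any power of $t$. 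Hence each fixed seminorm of the rescaled symbol $p(t)_t$ vanishes faster than any power of $t$.

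With this symbolic decay in hand, I would invoke Lemma \ref{BFresults2}, the Sobolev continuity statement of the bi-filtered calculus, which bounds the bi-filtered operator norm of $\what{\theta}^{\alpha}(q)$ by a finite sum of explicit seminorms of $q$ with multi-indices depending only on the Sobolev exponents. Applied to $q = p(t)_t$, this yields $t^{-N}\|\what{P}_t\|_{(s,k)} \to 0$ for every $N$ and every bi-filtered exponent $(s,k)$. To reach the single-index Sobolev norms appearing in the definition of asymptotically zero, I would pre- and post-compose $P_t$ with elliptic A$\Psi$DOs built from powers of $(1+\Delta_M+\Delta_F)$-type operators; by Theorem \ref{mainlemma} each such composition is again an A$\Psi$DO, so Lemma \ref{BFresults2} controls its norm, and elliptic regularity identifies that norm with the desired Sobolev operator norm of $P_t$.

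The main obstacle I anticipate is precisely this last bookkeeping step: verifying that the compositional formula of Theorem \ref{mainlemma} does not introduce negative powers of $t$ that would spoil the decay. This should follow because the elliptic auxiliaries are $t$-independent, so the asymptotic expansion of the composed symbol is obtained from $p(t)_t$ via the operators $a_k$ of Theorem \ref{comp}, and a careful seminorm bookkeeping through these $a_k$ transfers the rapid vanishing of the seminorms of $p(t)_t$ to the rapid vanishing of the seminorms of the composite, after which the argument of the previous paragraph concludes.
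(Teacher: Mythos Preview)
Your outline shares the paper's overall architecture---reduce general Sobolev norms to the $L^2$ case by composing with elliptic auxiliaries, and control the composition via Theorem~\ref{mainlemma}---but it has two concrete gaps.

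First, Lemma~\ref{BFresults2} is not a Sobolev continuity statement. In this paper it is the analogue of Lemma~3.8 of \cite{BF}: it says that an operator built from a symbol $r(x,X,Z,\xi,\sigma,t)$ depending on an extra spatial variable $Z$ is still an A$\Psi$DO, and it gives the asymptotic expansion of the effective symbol $\what r$. It does \emph{not} bound any operator norm by symbol seminorms. What you need in its place is an explicit $L^2$ estimate, and the paper supplies one directly: bound $\|P_t\|_{0,0}$ by the Hilbert--Schmidt norm, then use Plancherel on each fibre to turn that into the $L^2(\xi,\sigma)$ norm of $p(t)_t$, which one computes by choosing the symbol class $SC^{-n,-p}$ (legitimate since $p(t)\in SC^{-\infty,-\infty}$) and changing variables $(\xi,\sigma)\mapsto(\xi/t,\sigma/t)$. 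The paper then handles $\|P_t\|_{s,k}$ by composing with $\what\theta^{\alpha}((1+|\xi|^2+|\sigma|^2)^{\pm\bullet})$ and invoking Lemma~\ref{BFresults2} and the proof of Theorem~\ref{mainlemma} for exactly the purpose they serve: to identify the composed operator with $\what\theta^{\alpha}(\what w_t)$ for a new symbol $\what w$ enjoying the same rapid decay, after which the Plancherel argument applies again.

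Second, your reduction of the $M$ case to the $TF$ case via Proposition~\ref{ext} does not work at the Sobolev level: the lift $\what u(x,X)=u(x)$ is constant along the non-compact fibres of $TF\to M$, hence never in $L^2(TF)$ for $u\neq 0$, so there is no comparison of Sobolev norms to exploit. The paper treats the two cases in parallel rather than reducing one to the other; the $M$ argument is the same Plancherel/composition computation with $TF$ replaced by $M$ throughout.
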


By $p(t) \sim 0$ we mean that for all $N$, $\lim_{t \to 0} t^{-N}p(t) \,\, = \,\, 0 $ in $SC^{-\infty,-\infty} (TF, \what{E}_{\cS})$.  Note that we are not assuming that $p(t)$ is an asymptotic symbol.

\medskip

We now translate some results from \cite{BF}, which are extensions of results in \cite{Getz},  to our situation.  For the following, note that there are two lemmas in \cite{BF} labeled 3.9.  We are interested in the second one on page 20.  This lemma will be important for the proof of our main theorem.

\begin{lemma}[Lemma 3.9, p.\ 20 of \cite{BF}] \label{BFresults3}
{{Suppose that 
$p(t) \in SC^{m,\ell}(TF,\what{E}_{\cS})$ is a bounded family of symbols (i.\ e.\ the symbol estimates are independent of $t$). }} Then
$$
[\varsigma(\what{\theta}^{\alpha}(p_t))(x, X,\xi,\sigma,t)]_{t^{-1}} \,\, \sim \,\, p(x,X,\xi,\sigma,t).
$$

Setting $X=0$, gives the same result  for $p(t) \in SC^{m,\ell}(M,E)$.
\end{lemma}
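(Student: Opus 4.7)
The approach is to compute $\varsigma(\what{\theta}^{\alpha}(p_t))$ by unfolding the definitions, derive a Kohn--Nirenberg-style asymptotic expansion for it, and then verify that the rescaling $T_{t^{-1}}: q \mapsto q_{t^{-1}}$ sends this expansion back to $p(t)$ modulo an asymptotically zero family.  By Definition \ref{gensym}, $\varsigma(\what{\theta}^{\alpha}(p_t))(x,X,\xi,\sigma)(u)$ is obtained by applying $\what{\theta}^{\alpha}(p_t)$ to the test section
\[
(x',X')\;\longmapsto\;e^{i\langle\exp^{-1}_{(x,X)}(x',X'),(\xi,\sigma)\rangle}\,\wtit{\alpha}((x,X),(x',X'))\,\mathcal{T}_{x,x'}(u)
\]
and evaluating at $(x,X)$.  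After the change of variables $(x',X') = \exp_{(x,X)}(Z)$ and the cancellation of the parallel transports on the diagonal (since $u\in(\what{E}_{\cS})_{(x,X)}=(\cS_{\nu}\otimes E)_x$), this yields the oscillatory integral
\[
(2\pi)^{-n-p}\int e^{-i\langle Z,(\xi',\sigma')-(\xi,\sigma)\rangle}\,p_t(x,X,\xi',\sigma')\,\wtit{\alpha}^{2}((x,X),\exp_{(x,X)}(Z))\,u\,dZ\,d\sigma'\,d\xi'.
\]

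Splitting $\wtit{\alpha}^{2} = 1 + (\wtit{\alpha}^{2}-1)$ separates this into a principal piece (by Fourier inversion equal to $p_t(x,X,\xi,\sigma)\,u$) and a smoothing piece whose integrand is supported away from $Z=0$.  For the smoothing piece, the change of variable $(\xi',\sigma')\mapsto t^{-1}(\xi',\sigma')$ inside the integral rescales the position variable to $Z/t$; since the associated Schwartz kernel at $t=1$ is rapidly decaying in $Z$, evaluation at $Z/t$ produces rapid decay in $t$, so the operator family is asymptotically zero and, by Lemma \ref{simOps}, so is its contribution to the symbol.  To extract the full expansion I Taylor-expand $p_t(x,X,\xi',\sigma')$ about $(\xi,\sigma)$ inside the integral, producing
\[
\varsigma(\what{\theta}^{\alpha}(p_t))(x,X,\xi,\sigma)\;\sim\;p_t(x,X,\xi,\sigma)\;+\;\sum_{|\alpha|+|\beta|\ge 1}\frac{1}{\alpha!\,\beta!}\,\partial_{\xi}^{\alpha}\partial_{\sigma}^{\beta}\,\mathcal{D}^{\alpha,\beta}\,p_t(x,X,\xi,\sigma),
\]
where the $\mathcal{D}^{\alpha,\beta}$ are universal base-differential operators built from the jets of $\exp$, $\mathcal{T}$, and $\wtit{\alpha}$ at the diagonal, exactly as in the proof of Lemma 3.9 of \cite{BF}.

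Now apply $T_{t^{-1}}$ termwise.  The chain rule $\partial_{\xi}p_t = t\,(\partial_{\xi}p)_t$ (and the analogue for $\partial_{\sigma}$) shows that each correction of total order $|\alpha|+|\beta| = j \ge 1$ carries a prefactor $t^{j}$ after rescaling, while $T_{t^{-1}}(p_t) = p$.  Summing,
\[
T_{t^{-1}}\bigl(\varsigma(\what{\theta}^{\alpha}(p_t))\bigr)\;-\;p(t)\;\sim\;\sum_{j\ge 1} t^{j}\,r_{j}(t),
\]
with each $r_{j}(t)$ a bounded symbol family, which is asymptotically zero.  The $M$-version of the statement follows on setting $X=0$ and invoking Proposition \ref{ext}.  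The main obstacle is making all these estimates uniform in $(x,X)$ in the symbol topology (not merely in the operator sense): this is where the bounded-geometry hypothesis on $M$ and its canonical atlas of normal charts enters, and the detailed bookkeeping of the Taylor remainders proceeds in direct analogy with the corresponding step in \cite{BF}.
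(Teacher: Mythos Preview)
Your core approach---write out the oscillatory integral with cutoff $\wtit{\alpha}^{2}$, Taylor-expand, and track the powers of $t$ that appear under rescaling---is the same as the paper's.  The paper's execution is simply more direct: it applies the inverse rescaling $[\cdot]_{t^{-1}}$ \emph{first}, so that after the shift $(\xi',\sigma')\mapsto(\what{\xi}+\xi/t,\what{\sigma}+\sigma/t)$ the integrand already contains $p(x,X,t\what{\xi}+\xi,t\what{\sigma}+\sigma,t)$; Taylor-expanding this in $(\what{\xi},\what{\sigma})$ about $0$ makes the factors $t^{|(\alpha,\beta)|}$ appear at once, and the only surviving Taylor term is the constant one.

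Two places where your write-up overcomplicates matters deserve comment.  First, the splitting $\wtit{\alpha}^{2}=1+(\wtit{\alpha}^{2}-1)$ is unnecessary, and your appeal to Lemma~\ref{simOps} for the ``smoothing piece'' does not quite close: that lemma yields $\varsigma(P_t)\sim 0$, not $[\varsigma(P_t)]_{t^{-1}}\sim 0$, and the inverse rescaling can in principle spoil symbol estimates.  Second, your ``universal base-differential operators $\mathcal{D}^{\alpha,\beta}$ built from the jets of $\exp$, $\mathcal{T}$, $\wtit{\alpha}$'' are all zero here for $|\alpha|+|\beta|\ge 1$: because $\varsigma$ and $\what{\theta}^{\alpha}$ are defined via the \emph{same} $\exp_{(x,X)}$ and the \emph{same} parallel transport $\mathcal{T}$, these cancel exactly in the composition, and what remains is governed by $\partial_{Z}^{\alpha+\beta}\wtit{\alpha}^{2}\big|_{Z=0}=0$ (since $\wtit{\alpha}\equiv 1$ near the diagonal).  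Once these two detours are stripped out, your argument collapses to the paper's: the entire content is that the Taylor remainder of order $N+1$ carries an explicit $t^{N+1}$ and lands in grading $m+\ell-N-1$.
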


Note  that we are not assuming that $p$ is an asymptotic symbol.  Also, note that what we have to prove is that for all $N$,  $[\varsigma(\what{\theta}^{\alpha}(p_t))(x, X,\xi,\sigma,t)]_{t^{-1}}  -  p(x,X,\xi,\sigma,t)$ is of grading    $m+\ell-N-1$, and that 
$$
\lim_{t \to 0}t^{-N}\Big([\varsigma(\what{\theta}^{\alpha}(p_t))(x, X,\xi,\sigma,t)]_{t^{-1}}  -  p(x,X,\xi,\sigma,t)\Big) \,\, = \,\, 0,
$$
as described above.

\begin{remark}\label{inverses2}  This lemma says that $\varsigma$ is just what we want for our symbol operator as it allows us to recover (up to equivalence of symbols) the original symbol from its associated operator.   In particular, if $p(t)$ is polynomial in $\xi$ and $\sigma$, then the proof below actually shows that for all $t$, 
$$
[\varsigma(\theta^{\alpha}(p_t))(x, \xi,\sigma,t)]_{t^{-1}} \,\, = \,\, p(x, \xi,\sigma,t).
$$
This is because for $N$ large enough, the error term is zero.   In addition, just as in the previous section, we have that for any smooth differential operator $D$, $\theta^{\alpha} (\varsigma(D )) =  D$.  So we have that for differential operators and polynomial symbols, $\theta^{\alpha}$ and $\varsigma$ are inverses of each other.
\end{remark}

\begin{proof}  
We may use the proof of \cite{BF} mutatis mutandis.  Some typos in that proof are given in the remark immediately after this proof. 

First note that we may assume without loss of generality that $p$ has no form components.  For, if $p = \omega \otimes \wtit{p}$, where $\wtit{p}$ contains no form component, then   it follows easily that $\varsigma(\what{\theta}^{\alpha}(\omega \otimes \wtit{p})) =   \omega \otimes \varsigma(\what{\theta}^{\alpha} (\wtit{p}))$.  A straight forward computation gives  (we suppress constants)
$$
[\varsigma(\what{\theta}^{\alpha}(p_t))(x,X,\xi,\sigma,t)]_{t^{-1}} \,\, = \,\,\varsigma(\what{\theta}^{\alpha}(p_t))(x,X, \xi/t,\sigma/t,t)  \,\, = \,\, 
$$
$$
\int e^{-i\langle Z,(\what{\xi},\what{\sigma})\rangle} \wtit{\alpha}((x,X),\exp_{(x,X)}(Z))^2 
p(x,X, t\what{\xi} + \xi,t\what{\sigma} + \sigma,t)  dZ d\what{\sigma} d\what{\xi}.
$$

Now apply Taylor's formula to the variables $\what{\xi}$ and $\what{\sigma}$ in $p$ to obtain the formula 
$$
p(x,X,t\what{\xi}+\xi , t\what{\sigma} + \sigma,t) \,\, = 
\sum_{|(\alpha,\beta)| \leq N} \frac{t^{{|(\alpha,\beta)|}} \what{\xi}^{\alpha} \what{\sigma}^{\beta}}{\alpha ! \beta ! } 
(\pa^{\alpha}_{\what{\xi}}\pa^{\beta}_{\what{\sigma}}p)(x,X,\xi , \sigma,t)  \,\, + \,\, 
$$
$$\sum_{|(\alpha,\beta)| = N+1} \frac{N+1}{\alpha!\beta!} t^{N+1}\what{\xi}^{\alpha} \what{\sigma}^{\beta}   
\int_0^1 (\pa^{\alpha}_{\what{\xi}}\pa^{\beta}_{\what{\sigma}}p)(x, X, st\what{\xi} + \xi,  st\what{\sigma} + \sigma, t)ds.
$$
Just as in \cite{BF}, when we integrate with respect to $\what{\xi}$ and $ \what{\sigma}$, all the terms in the first sum disappear, except for the first one, namely $p(x,X,\xi , \sigma,t)$, so we have that 
$$
[\varsigma(\what{\theta}^{\alpha}(p_t))(x,X,\xi,\sigma)]_{t^{-1}} \,\, = \,\,
p(x,X,\xi , \sigma,t)  \,\, + \,\, \mathcal{E},
$$
where  the error term $\mathcal{E}$ is
$$
\int e^{-i\langle Z,(\what{\xi},\what{\sigma})\rangle} \wtit{\alpha}((x,X),\exp_{(x,X)}(Z))^2 
\hspace{-0.3cm}
\sum_{|(\alpha,\beta)| = N+1} \hspace{-0.2cm}\frac{N+1}{\alpha!\beta!} t^{N+1}\what{\xi}^{\alpha} \what{\sigma}^{\beta}   
\int_0^1 (\pa^{\alpha}_{\what{\xi}}\pa^{\beta}_{\what{\sigma}}p)(x,X, st\what{\xi} + \xi,  st\what{\sigma} + \sigma, t)ds
 dZ d\what{\sigma} d\what{\xi}.
$$
If  $|(\alpha,\beta)| = N+1$, then $\pa^{\alpha}_{\what{\xi}}\pa^{\beta}_{\what{\sigma}}p$ has grading $m + \ell -N-1$, and we can finish the proof just as in \cite{BF}.
\end{proof}

\begin{remark}  
Typos in the proof of Lemma 3.9, p.\ 20 of \cite{BF}.  The exponentials have the wrong sign.  In the last line of $(3.19)$, the $d\xi_m$ should be $d\eta_m$. In the first line of $(3.20)$,  the sum is missing the factor $\eta^{\alpha}_m$.   In the second line of $(3.20)$,  $(t\alpha)^m$ should be $t^{|\alpha|}\eta^{\alpha}_m$, and the function $r$ is missing the variable $t$.    In  $(3.21)$,  $\pa_{X_m}$ should be  $\pa ^{\alpha}_{X_m}$.  In the first line of $(3.21)$, the term $\phi(m,X_m)$ is missing.  In $(3.23)$,  the middle line is missing the term $\eta^{\alpha}_m$ under the integral sign.
\end{remark}

Lemmas \ref{simOps} and  \ref{BFresults3} immediately give the following.
\begin{corollary}
{{Suppose that $P_t$ is an A$\Psi$DO, with $P_t \sim \what{\theta}^{\alpha}(p(t)_t)$ or   $P_t \sim \theta^{\alpha}(p(t)_t)$.  Then
$\varsigma(P_t )_{t^{-1}} \sim p(t)$.}}
\end{corollary}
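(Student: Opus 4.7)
The plan is to combine the two preceding lemmas additively, using the linearity of the symbol map and of the rescaling operation. By the definition of an A$\Psi$DO, write $P_t = \what{\theta}^{\alpha}(p(t)_t) + R_t$ (or the analogous decomposition over $M$), where $R_t$ is a family of smoothing operators with $R_t \sim 0$. Since $\varsigma$ is linear and the rescaling $(\cdot)_{t^{-1}}$ is linear, we have
$$
\varsigma(P_t)_{t^{-1}} \,\, = \,\, \varsigma(\what{\theta}^{\alpha}(p(t)_t))_{t^{-1}} \,\, + \,\, \varsigma(R_t)_{t^{-1}}.
$$

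First I would handle the main term: since $p(t)$ is an asymptotic symbol, it is in particular a bounded family in $SC^{m,\ell}(TF,\what{E}_{\cS})$ (resp.\ $SC^{m,\ell}(M,E)$) in the sense of Lemma \ref{BFresults3}, so that lemma applies and gives
$$
\varsigma(\what{\theta}^{\alpha}(p(t)_t))_{t^{-1}} \,\, \sim \,\, p(t).
$$

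For the error term, I would apply Lemma \ref{simOps} to conclude that $\varsigma(R_t) \sim 0$, meaning each seminorm $\rho_{\alpha,\beta,\lambda}(\varsigma(R_t))$ vanishes to infinite order in $t$. It then remains to observe that the rescaling operation $(\cdot)_{t^{-1}}$, which for a symbol of pure form degree $k$ multiplies by $t^{-k}$ and substitutes $(\xi,\sigma) \mapsto (\xi/t,\sigma/t)$, only introduces a fixed negative power of $t$ (bounded by the codimension $q$) together with a rearrangement of the symbol estimates; hence $\varsigma(R_t) \sim 0$ implies $\varsigma(R_t)_{t^{-1}} \sim 0$ in $SC^{-\infty,-\infty}$. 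Adding the two contributions yields $\varsigma(P_t)_{t^{-1}} \sim p(t)$.

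The only subtle point, and the one I would check carefully, is the last observation about compatibility of rescaling with asymptotic vanishing: this is a bookkeeping exercise verifying that the factor $t^{-k}$ with $k \leq q$ is harmlessly absorbed by the infinite-order decay of $\varsigma(R_t)$ coming from Lemma \ref{simOps}, and that the substitution $(\xi,\sigma)\mapsto(\xi/t,\sigma/t)$ respects the seminorm estimates of Definition \ref{symbdef} up to powers of $t$ that are again absorbed. Once this is in hand, the statement follows in both the $\what{\theta}^{\alpha}$ and $\theta^{\alpha}$ cases simultaneously, since the arguments for $TF$ and for $M$ are parallel.
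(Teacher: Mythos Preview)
Your proposal is correct and follows exactly the approach the paper indicates: the paper simply states that the corollary follows immediately from Lemmas \ref{simOps} and \ref{BFresults3}, without writing out any further details. You have correctly unpacked this into the decomposition $P_t = \what{\theta}^{\alpha}(p(t)_t) + R_t$ and applied each lemma to the appropriate summand; the additional bookkeeping you flag about rescaling preserving asymptotic vanishing is a routine check that the paper leaves implicit.
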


\begin{proposition}[Lemma 3.6 of \cite{BF}] \label{BFresults4}
Suppose that $\phi$ is a smooth function on $T(TF)$ which  is zero on a neighborhood of the zero section.  Assume further that for each $(x,X)$, $\phi$ is supported in a neighborhood of  zero in $T(TF)_{(x,X)}$ where  $\exp_{(x,X)}$ is a diffeomorphism.  Given an asymptotic symbol $p(t) \in SC^{\infty,\infty}(TF,\what{E}_{\cS})$, and a section $u$ of $\what{E}_{\cS}$, set
$$\what{P_t}(u)(x,X) \,\, = \,\, 
(2 \pi )^{-n-p}   \int_{T^*(TF)_{(x,X)} \times  T(TF)_{(x,X)}}   \hspace{-2.5cm}
e^{-i\langle Z,(\xi,\sigma) \rangle}p(x,X,\xi ,\sigma,t)_t \phi(Z)\mathcal{T}^{-1}_{x,\exp_{x}(Z_1+Z_2,Z_{\nu})} u(\exp_{(x,X)}(Z))\,  dZ d\sigma d\xi.
$$
Then $\what{P_t}$ is an asymptotically zero operator .

Similarly for $p(t) \in SC^{\infty,\infty}(M,E)$.
\end{proposition}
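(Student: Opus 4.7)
The plan is to show that the Schwartz kernel $K_t$ of $\what{P_t}$ is smooth, uniformly compactly supported transverse to the diagonal, and is $O(t^{N})$ in every $C^{\ell}$-norm for every $N$; then the asymptotic vanishing of $\what{P_t}$ in all Sobolev norms will follow from the same Schur/kernel estimates used in Lemma \ref{simOps}. The key observation that makes everything work is that since $\phi$ vanishes on a neighborhood of the zero section of $T(TF)$ and is compactly supported fiberwise, there is a constant $c>0$ such that $|Z|\geq c$ on $\Supp\phi$. This is what lets us trade arbitrarily many $(\xi,\sigma)$-derivatives for harmless powers of $|Z|^{-2}$.

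First, I would freeze $(x,X)$, rewrite $\what{P_t}(u)(x,X) = \int K_t((x,X),Z)\,\mathcal{T}^{-1}u(\exp_{(x,X)}(Z))\,dZ$, with
$$
K_t((x,X),Z) \,\,=\,\, \phi(Z)\int_{T^*(TF)_{(x,X)}} \hspace{-1.7cm} e^{-i\langle Z,(\xi,\sigma)\rangle}\,p(x,X,\xi,\sigma,t)_t\,d\xi\,d\sigma,
$$
viewed as an oscillatory integral. Using $(-\Delta_{(\xi,\sigma)})e^{-i\langle Z,(\xi,\sigma)\rangle} = |Z|^2 e^{-i\langle Z,(\xi,\sigma)\rangle}$ and the fact that $|Z|^{-2N}$ is smooth on $\Supp\phi$, I would integrate by parts $N$ times in $(\xi,\sigma)$ to replace $p(x,X,\xi,\sigma,t)_t$ by $|Z|^{-2N}(-\Delta_{(\xi,\sigma)})^N p(x,X,\xi,\sigma,t)_t$. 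Now comes the decisive book-keeping: because the rescaling acts on a form-degree $k$ component as $(p_k)_t(x,X,\xi,\sigma)=t^{k}p_k(x,X,t\xi,t\sigma,t)$, each $\xi$- or $\sigma$-derivative of $p(t)_t$ produces an extra factor of $t$. Hence $(-\Delta_{(\xi,\sigma)})^N p(t)_t$ is $t^{2N}$ times a family uniformly bounded in $SC^{m-2N,\ell-2N}(TF,\what{E}_{\cS})$ for the appropriate $m,\ell$ (this uses that $p(t)$ is an asymptotic symbol, so the relevant symbol estimates can be made uniform in $t$ for $t\in(0,1]$).

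Next I would gain integrability in $(\xi,\sigma)$ by a companion integration by parts in $Z$: writing $e^{-i\langle Z,(\xi,\sigma)\rangle}=(1+|(\xi,\sigma)|^2)^{-M}(1-\Delta_Z)^M e^{-i\langle Z,(\xi,\sigma)\rangle}$ and transferring $(1-\Delta_Z)^M$ onto $|Z|^{-2N}\phi(Z)$ (which is smooth and compactly supported in $Z$) against the cut-off integrand. Choosing $M$ large enough (depending on $m-2N,\ell-2N$) makes the $(\xi,\sigma)$-integral absolutely convergent, and the resulting bound is $|K_t((x,X),Z)|\le C_{N,M}\,t^{2N}\,\mathbf{1}_{\Supp\phi}(Z)$ with $C_{N,M}$ uniform in $(x,X)$ by the bounded-geometry hypothesis. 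Repeating the argument after applying $\pa^{\lambda}_{x,X}$ (which differentiates the symbol, the parallel transport, and the exponential) yields the same $O(t^{2N})$ bound on all derivatives of $K_t$.

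Finally I would invoke the kernel-to-Sobolev norm estimate exactly as in the proof of Lemma \ref{simOps}: a smooth, uniformly compactly supported kernel whose $C^{\ell}$-norm is $O(t^{2N})$ defines an operator whose $\oH^{s}\to\oH^{k}$ norm is $O(t^{2N})$ (for $|s|,|k|\le\ell$), so $t^{-N'}\|\what{P_t}\|_{s,k}\to 0$ for every $N',s,k$. Hence $\what{P_t}\sim 0$. The argument for $p(t)\in SC^{\infty,\infty}(M,E)$ is identical after setting $X=0$ and using $\theta^{\alpha}$ instead of $\what{\theta}^{\alpha}$. The only real subtlety, and the step most prone to error, is the book-keeping of the rescaling factors in item (3) above; this is where the asymptotic-symbol hypothesis (which allows us to approximate $p(t)$ modulo arbitrarily smoothing families) is doing its work, together with the fact that form-degree-$k$ pieces gain exactly $t^{k}$ under rescaling, matching the extra $t^{|\alpha|+|\beta|}$ that comes from $\pa^{\alpha}_{\xi}\pa^{\beta}_{\sigma}$.
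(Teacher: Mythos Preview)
Your proposal is correct and follows essentially the same route as the paper's proof: both write out the Schwartz kernel, observe it is supported in an annular region (bounded away from and close to the diagonal), and then integrate by parts via $e^{-i\langle Z,(\xi,\sigma)\rangle}=|Z|^{-2k}\Delta^k_{\xi,\sigma}e^{-i\langle Z,(\xi,\sigma)\rangle}$, using that differentiating $p(t)_t$ in $(\xi,\sigma)$ produces the factor $t^{2k}$. The paper is terser and defers the integrability/Sobolev bookkeeping to \cite{BF} and \cite{Shubin}, whereas you spell out the companion integration by parts in $Z$ and the kernel-to-Sobolev step; your discussion of the form-degree factors is slightly more elaborate than needed (the $t^{2N}$ from $\Delta^N_{\xi,\sigma}$ appears regardless of form degree), but nothing is wrong.
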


\begin{proof}
We may use the proof of \cite{BF}, with the following changes.  The Schwartz kernel of $\what{P}_t$ is given by 
$$
K_t((x,X),(x',X')) \,\, = \,\, 
$$
$$(2\pi)^{-n-p}\int_{T^*(TF)_{(x,X)}}  \hspace{-1.3cm}
e^{-i\langle\exp^{-1}_{(x,X)}(x',X'), (\xi,\sigma)\rangle}
p(x,X,\xi ,\sigma,t)_t \phi(\exp^{-1}_{(x,X)}(x',X')) 
J(\frac{dz}{d\vol}) \mathcal{T}^{-1}_{x,x'} \,  d\sigma d\xi,
$$
where $J$ is the Jacobian of the change in volume forms, and $d\vol$ is the volume form on $TF$.  The kernel $K_t$ is an element of $C^{\infty}(TF \times TF)$ which is supported in a bounded neighborhood of the diagonal and is zero on a smaller neighborhood of the diagonal.  As such, standard arguments for uniformly supported operators on manifolds with bounded geometry show that it is infinitely smoothing, see for instance \cite{Shubin}.   

To see that it is asymptotically zero, set $Z = \exp^{-1}_{(x,X)}(x',X')$, and note that  
$$
e^{-i\langle Z, (\xi,\sigma)\rangle} = ||Z||^{-2k}\Delta^k_{\xi,\sigma}e^{-i\langle Z, (\xi,\sigma)\rangle} \quad  \text{ and }
\quad \Delta^k_{\xi,\sigma}\Big(p(x,X,t\xi ,t\sigma,t)\Big) = t^{2k}(\Delta^k_{\xi,\sigma}p)(x,X,t\xi ,t\sigma,t),
$$
where $\Delta_{\xi,\sigma} = -(\sum \pa^2/ \pa \xi_j^2  +  \sum \pa^2/ \pa \sigma_k^2)$.  Then, using integration by parts repeatedly, we have
$$
\int_{T^*(TF)_{(x,X)}}  \hspace{-1.2cm}
e^{-i\langle Z, (\xi,\sigma)\rangle} 
p(x,X,\xi ,\sigma,t)_t \phi(Z) J(\frac{dz}{d\vol}) \mathcal{T}^{-1}_{x,x'}
\, d\sigma d\xi  \,\, = \,\, 
$$ 
$$
t^{2k} \int_{T^*(TF)_{(x,X)}}  \hspace{-1.2cm}
 \phi(Z)  ||Z||^{-2k}e^{-i\langle Z, (\xi,\sigma)\rangle}
(\Delta^k_{\xi,\sigma}p)(x,X,t\xi ,t\sigma,t)
J(\frac{dz}{d\vol}) \mathcal{T}^{-1}_{x,x'}
  \,d\sigma d\xi,
$$ 
where $k$ is as large as we please.  To finish the proof, proceed just as in \cite{BF}.
\end{proof}

We have immediately,
\begin{corollary}\label{ndalpha}
Suppose $p(t) \in SC^{\infty,\infty}(TF,\what{E}_{\cS})$ is an asymptotic symbol.  Then the equivalence class of the  A$\Psi$DO  $P_t \, = \, \what{\theta}^{\alpha}(p(t)_t)$  does not depend on the choice of $\alpha$.

Similarly, if $p(t) \in SC^{\infty,\infty}(M,E)$ is an asymptotic symbol,  the  the equivalence classes of the A$\Psi$DOs  $\theta^{\alpha}(p(t)_t)$ and $ \what{\theta}^{\alpha}(p(t)_t)$ do not depend on the choice of $\alpha$.
\end{corollary}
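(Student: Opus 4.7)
The plan is to deduce both assertions directly from Proposition \ref{BFresults4}. Given two admissible bump functions $\alpha, \alpha'$ on $M \times M$ satisfying the support requirements laid out before Definition \ref{Quantization}, the difference $\what{\theta}^{\alpha}(p(t)_t) - \what{\theta}^{\alpha'}(p(t)_t)$ equals the same oscillatory integral used to define $\what{\theta}^{\alpha}(p(t)_t)$, with the cutoff factor $\wtit{\alpha}((x,X),\exp_{(x,X)}(Z))$ replaced by
\[
\phi_{(x,X)}(Z) \,:=\, \wtit{\alpha}((x,X),\exp_{(x,X)}(Z)) - \wtit{\alpha}'((x,X),\exp_{(x,X)}(Z)).
\]
By the explicit formula for $\wtit{\alpha}$ given just before Proposition \ref{ext}, each of $\wtit{\alpha}$ and $\wtit{\alpha}'$ is a product of two factors of type $\alpha(\cdot,\cdot)$ (respectively $\alpha'(\cdot,\cdot)$) on $M \times M$, each identically $1$ on a neighborhood of the diagonal. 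Hence both lifts equal $1$ on a neighborhood of the diagonal of $TF \times TF$, and $\phi_{(x,X)}$ vanishes on a neighborhood of the zero section of $T(TF)$.

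The remaining hypothesis of Proposition \ref{BFresults4}, that $\phi_{(x,X)}$ be supported inside a neighborhood of $0 \in T(TF)_{(x,X)}$ on which $\exp_{(x,X)}$ is a diffeomorphism, follows from the support requirements imposed on $\alpha$ and $\alpha'$ together with the bounded geometry of $M$, which provides a uniform injectivity radius. Applying Proposition \ref{BFresults4} to the asymptotic family $p(t)$ with cutoff $\phi_{(x,X)}$ then yields $\what{\theta}^{\alpha}(p(t)_t) \sim \what{\theta}^{\alpha'}(p(t)_t)$, establishing the first assertion.

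For the second statement, I would run exactly the same argument. The integrand of $\theta^{\alpha}(p(t)_t)(u)(x)$ contains two explicit factors of $\alpha$, one inside $\overline{u}_x(X)$ and the other the factor $\alpha(x,\exp_x(Y))$ in Definition \ref{Quantization}; swapping $\alpha$ for $\alpha'$ produces a smooth cutoff on $TM_x \times TF_x$ vanishing on a neighborhood of the origin for the same reason as above, and the variant of Proposition \ref{BFresults4} for symbols on $M$ applies verbatim. Note that there is no claim that $\theta^{\alpha}(p(t)_t)$ and $\what{\theta}^{\alpha}(p(t)_t)$ are equivalent to each other; each equivalence class is independently shown to be independent of the choice of $\alpha$.

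The only real obstacle is the bookkeeping needed to check that $\phi_{(x,X)}$ satisfies the hypotheses of Proposition \ref{BFresults4} pointwise and that the resulting error is uniformly controlled in $(x,X)$. This reduces to verifying that the neighborhoods on which the chosen bump functions equal $1$ pull back, via the exponential map on $TF$, to a genuine neighborhood of the zero section independent of $(x,X)$, which is a direct consequence of the bounded geometry assumption.
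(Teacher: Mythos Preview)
Your proposal is correct and follows essentially the same route as the paper, which simply records the corollary as an immediate consequence of Proposition \ref{BFresults4}. You have spelled out the verification that the difference $\wtit{\alpha}-\wtit{\alpha}'$ (and its analogue on $M$) satisfies the hypotheses of that proposition, which is exactly the intended argument; the paper also remarks, just after the corollary, that Proposition \ref{BFresults4} allows different bump functions in the separate places where $\alpha$ appears, confirming your treatment of the $M$ case.
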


Note that Proposition \ref{BFresults4} actually implies more than this.  It implies that in the definition of $\theta$ or $\what{\theta}$, we may use different bump functions in the places where $\alpha$ occurs.  
 
\medskip  
Let $\pi_{\oplus}:T(TF) \oplus T^*(TF) \to TF$ be the projection.   The next technical result is essential to the proof of our main result, Theorem \ref{mainlemma}. 

\begin{lemma}[Lemma 3.8 of \cite{BF}]\label{BFresults2}\
Suppose that 
$
r(t) \in C^{\infty}( T(TF) \oplus T^*(TF), \pi_{\oplus}^*\End(\what{E}_{\cS})),
$
and that for any multi-indices $\alpha$, $\beta$,  and $\delta$,  there is a constant $C_{\alpha,\beta, \delta} > 0$ (independent of $t$) with
$$
||  \,   \pa^{\alpha}_{\xi} \pa^{\beta}_{\sigma} \pa^{\delta}_{x,X,Z}        r(x,X,Z,\xi,\sigma,t) \, || \,\, \leq \,\, 
C_{\alpha,\beta,\delta}(1 + |\xi|)^{m- |\alpha|}(1 + |\sigma|)^{\ell-|\beta|},
$$
{{with respect to a fixed atlas of normal coordinates.}}

Assume moreover that $r$ has an asymptotic expansion $r(t) \sim \sum_{k = 0}^{\infty} t^k r_k$.

For a section $u$ of $\what{E}_{\cS}$, set
$$
R_t(u)(x,X) \,\, = \,\,  (2 \pi )^{-n-p}    \int_{T^*(TF)_{(x,X)} \times T(TF)_{(x,X)}}                     \hspace{-2.0cm} 
e^{-i\langle  Z, (\xi,\sigma)\rangle} r(x,X,Z,\xi,\sigma,t)_t \overline{u}_{(x,X)}(Z)\, dZ d\sigma d\xi.
$$
Then $R_t$ is an A$\Psi$DO, in particular $R_t = \what{\theta}^{\alpha}(\what{r}_t)$, where
$$
 \what{r}(x,X,\xi,\sigma,t) \,\, = \,\, (2 \pi )^{-n-p}   \int_{T^*(TF)_{(x,X)} \times T(TF)_{(x,X)}}   \hspace{-2.0cm} 
 e^{-i\langle  Z,(\what{\xi},\what{\sigma})\rangle} r(x,X, Z, t \what{\xi}+\xi, t \what{\sigma}+\sigma,t)\, 
 dZ d\what{\sigma} d\what{\xi},
$$
and 
$$
\what{r}(x,X,\xi,\sigma,t) \sim \sum_{\alpha,\beta \geq 0} \frac{t^{|(\alpha,\beta)|}}{\alpha!\beta!}
\pa^{\alpha}_{(Z_1, Z_{\nu})}\pa^{\alpha}_{\xi} \pa^{\beta}_{Z_2} \pa^{\beta}_{\sigma} 
r(x,X,Z,\xi,\sigma,t) \, | \, _{Z=0}.
$$
Furthermore, the leading symbol of $\what{r}$ is $\what{r}_0(x,X,\xi,\sigma) = r_0(x,X,0,\xi,\sigma)$.
\end{lemma}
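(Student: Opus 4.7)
The plan is to prove the lemma in four conceptual steps, essentially adapting the proof of Lemma 3.8 of \cite{BF} to the new setting of $TF$ with its bundle $\what{E}_{\cS}$; the extra base variable $X$ and the form-grading factors $\wedge^k\pi^*(\nu^*)$ play no role in the oscillatory-integral machinery and simply ride along as spectators. The steps are (i) verify the operator identity $R_t = \what{\theta}^{\alpha}(\what{r}_t)$; (ii) show that $\what{r}(\cdot,t)$ is a genuine symbol with seminorms controlled uniformly in $t$; (iii) extract the asymptotic expansion; (iv) read off the leading symbol.

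For (i), I would start from the defining oscillatory integral of $\what{\theta}^{\alpha}(\what{r}_t)$, substitute the integral expression for $\what{r}$ from the statement, rescale $(\xi,\sigma)\mapsto(\xi/t,\sigma/t)$ to expose the Jacobian factor $t^{-(n+p)}$, and then shift the inner frequency variable $(\what{\xi},\what{\sigma})\mapsto((\what{\xi}-\xi)/t,(\what{\sigma}-\sigma)/t)$ so that the two exponentials merge into $e^{-i\langle Z-W,(\xi,\sigma)\rangle/t}\,e^{-i\langle W,(\what{\xi},\what{\sigma})\rangle/t}$. The $(\xi,\sigma)$-integration then produces a multiple of $\delta(Z-W)$; performing the $W$-integration collapses $W$ to $Z$, and rescaling back to the physical frequencies reconstitutes precisely the defining integral of $R_t$, with the form-degree factor $t^k$ cancelling consistently between the two sides (so the argument can be carried out one form-degree summand at a time).

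Step (ii) is where the real work sits and is the main obstacle. The oscillatory integral defining $\what{r}$ is not absolutely convergent, so I would regularize by integration by parts in the pair $(Z,(\what{\xi},\what{\sigma}))$: apply $(1+|\what{\xi}|^2+|\what{\sigma}|^2)^{-M}(1-\Delta_Z)^M$ to trade powers of $(\what{\xi},\what{\sigma})$ for $Z$-derivatives of $r$, and $(1+|Z|^2)^{-N}(1-\Delta_{\what{\xi},\what{\sigma}})^N$ to trade powers of $Z$ for decay coming from the hypothesis $\|\pa^\alpha_\xi\pa^\beta_\sigma\pa^\delta_{x,X,Z} r\|\leq C_{\alpha,\beta,\delta}(1+|\xi|)^{m-|\alpha|}(1+|\sigma|)^{\ell-|\beta|}$. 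Each such manipulation introduces only bounded factors from the bounded-geometry normal atlas, so the resulting bounds on $\pa^\alpha_\xi\pa^\beta_\sigma\pa^\lambda_{x,X}\what{r}(\cdot,t)$ are uniform in $t$ with exactly the symbol-type decay in $(\xi,\sigma)$ needed to place $\what{r}(\cdot,t)$ in the appropriate class $SC^{m,\ell}(TF,\what{E}_{\cS})$.

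For (iii), I would apply Taylor's formula of order $N$ to $r(x,X,Z,t\what{\xi}+\xi,t\what{\sigma}+\sigma,t)$ in its last two arguments around $(\xi,\sigma)$. The polynomial terms carry the factor $\frac{t^{|(\alpha,\beta)|}\what{\xi}^{\alpha}\what{\sigma}^{\beta}}{\alpha!\beta!}$ in front of $\pa^{\alpha}_{\xi}\pa^{\beta}_{\sigma} r(x,X,Z,\xi,\sigma,t)$; using $\what{\xi}^{\alpha}\what{\sigma}^{\beta} e^{-i\langle Z,(\what{\xi},\what{\sigma})\rangle} = i^{|(\alpha,\beta)|}\pa^{\alpha}_{(Z_1,Z_\nu)}\pa^{\beta}_{Z_2}e^{-i\langle Z,(\what{\xi},\what{\sigma})\rangle}$ and integrating by parts in $Z$ transfers these derivatives onto $r$, and the residual $d\what{\xi}\,d\what{\sigma}$-integration produces a delta at $Z=0$, yielding the claimed $N$-th partial sum (the conventional signs being absorbed exactly as in \cite{BF}). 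The Taylor remainder carries $t^{N+1}$ and is estimated by the regularization of step (ii) in a symbol class of grading $m+\ell-N-1$, which proves the required asymptotic-decay statement. Finally for (iv), the $\alpha=\beta=0$ term of the expansion is $r(x,X,0,\xi,\sigma,t)$, and combined with the assumed $r(t)\sim\sum_k t^k r_k$ this gives $\what{r}_0(x,X,\xi,\sigma)=r_0(x,X,0,\xi,\sigma)$. That $R_t$ is then an A$\Psi$DO follows immediately from (i) together with the fact that $\what{r}$ is an asymptotic symbol.
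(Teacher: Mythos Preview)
Your proposal is correct and follows essentially the same route as the paper, which simply invokes the proof of Lemma~3.8 in \cite{BF} mutatis mutandis (together with a list of typo corrections to that source). Your steps (i)--(iv) reproduce exactly the structure of the \cite{BF} argument: the operator identity via Fourier inversion, the symbol estimates via the $(1+\Delta_Z)^N/(1+|\what{\xi}|^2+|\what{\sigma}|^2)^N$ integration-by-parts trick (their equations~(3.11)--(3.12)), the Taylor expansion in the shifted frequency variables (their~(3.13)--(3.14)), and the identification of the leading term.
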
 

\begin{proof}
Again, we may use the proof of \cite{BF} mutatis mutandis, with the following changes.

First note that the formula for $p(m,\xi_m,t)$ is missing the factor $t^{|\alpha|}$, and the definition of $q(m,\xi_m,\eta_m,t)$ should be
$$
q(m,\xi_m,\eta_m,t) \,\, = \,\,  \int e^{-i\langle  X_m,\eta_m\rangle} r(m,\xi_m,X_m,t) \, dX_m.
$$
Equation $(3.10)$ should be 
$$
R_t(s)(m) \,\, = \,\, \int e^{-i\langle  X_m,\eta_m\rangle}  p(m,t\eta_m,t) \hat{s}(X_m) \, dX_m \,d\eta_m.
$$
In the third line of $(3.11)$, $\xi^{-x}_m$ should be $|\xi_m|^{-x}$, where $x$ is an even positive integer, and $\pa^{x}_{X_m}$ should be $\Delta^{x/2}_{X_m}$, where $\Delta$ is the Laplacian for $X_m$.   In addition, the last $\xi$ in that line should be an $X$.   In the fifth line, the first $d- |\beta|$ should be $|d- |\beta||$.  Next, note that $(3.11)$ is really two inequalities, one for $|\xi_m| \geq 1$ and another for $|\xi_m| \leq 1$.   The first inequality is the one given, but $K$ should be replaced by $2^{x}K$.   The second is proven by deleting the third and fourth lines of $(3.11)$ and replacing $K C_{\alpha,\beta,x}$ by $2^{x}KC_{\alpha,\beta}$.  One then makes the appropriate changes in $(3.12)$, i.e.
$$
\rho_{\alpha,\beta}(T(r)) \,\, \leq \,\, 2^{x}K\Big( \rho_{\alpha,\beta,x}(r) + \rho_{\alpha,\beta}(r)   \Big).
$$

Equation  $(3.13)$ should be
$$
r(m, t\xi_m + \eta_m,X_m, t)  \,\, \sim \,\,    \sum_{\alpha\geq 0} \frac{t^{|\alpha|}}{\alpha!}(\pa^{\alpha}_{\xi_m}
r)(m,\eta_m,X_m,t) \xi^{\alpha}_m,
$$
whence comes the missing $t^{|\alpha|}$ in statement of Lemma 3.8 of \cite{BF}.  Note that $\pa^{\alpha}_{\xi_m} r$ can be replaced by  $\pa^{\alpha}_{\eta_m} r$ if we think of $r$ as being a function of $\eta$ instead of $\xi$.  Then in $(3.14)$, $1/\alpha!$ should be replaced by $t^{|\alpha|}/\alpha!$, and $\pa^{\alpha}_{\xi_m} r$ should be replaced by $\pa^{\alpha}_{\eta_m} r$.  This gives
$$
p(m,\eta_m,t)  \,\, \sim \,\,
\sum_{\alpha\geq 0} \frac{t^{|\alpha|}}{\alpha!} \pa^{\alpha}_{X_m}\pa^{\alpha}_{\eta_m}
 r(m,\eta_m,X_m,t) \, | \, _{X_m=0}.
$$
\end{proof}

\begin{proposition}\label{L3.9BF}[Lemma 3.9, p.\    19, \cite{BF}]
If $P_t$ is an A$\Psi$DO,  then its {{formal}}  adjoint is also an A$\Psi$DO.
\end{proposition}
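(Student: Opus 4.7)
The plan is to adapt the proof of Lemma 3.9 on page 19 of \cite{BF} by expressing $P_t^*$ as an amplitude-type operator and reducing it to an A$\Psi$DO via Lemma \ref{BFresults2}. Assume $P_t \sim \theta^{\alpha}(p(t)_t)$ for an asymptotic symbol $p(t) \in SC^{\infty,\infty}(M,E)$; by Corollary \ref{ndalpha} we may take the bump function $\alpha$ symmetric in its two arguments.

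The first task is to compute the Schwartz kernel of $P_t$, take its pointwise adjoint in $\End(\cS_{\nu} \otimes E)$, and swap the two base points $x$ and $x'$. After the substitution $(\xi,\sigma) \mapsto -(\xi,\sigma)$ and the smooth diffeomorphism $\phi(x,\cdot)$ relating $-\exp^{-1}_{x'}(x)$ to $V = \exp^{-1}_{x}(x')$ (which fixes zero and has derivatives uniformly bounded in $x$ by the bounded-geometry hypothesis), this rewrites $P_t^*$ as an amplitude operator based at $x$ with amplitude $r(x,V,\xi,\sigma,t)$ assembling the pullback of $p(x',\xi,\sigma,t)^*$ along $\phi$, the parallel-transport factors $(\mathcal{T}^{-1}_{x',x})^*\mathcal{T}^{-1}_{x,x'}$, the Jacobian of the change of coordinates, and the bump function. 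By bounded geometry, $r$ inherits the symbol estimates of $p$ uniformly in $(x,t)$, and the expansion $p(t)\sim \sum_k t^k p_k$ produces an expansion $r(t)\sim \sum_k t^k r_k$.

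The second task is to apply Lemma \ref{BFresults2}, after lifting the operator to $TF$ via Proposition \ref{ext}: viewing $V$ as the $TM$-component of $Z \in T(TF)$ gives an amplitude meeting the hypotheses of Lemma \ref{BFresults2}, which then identifies the adjoint with $\what{\theta}^{\alpha}(\what{r}_t)$ for an explicit asymptotic symbol $\what{r} \in SC^{\infty,\infty}(TF,\what{E}_{\cS})$. Descending back to $M$ yields $P_t^* \sim \theta^{\alpha}(q(t)_t)$ for an asymptotic symbol $q(t) \in SC^{\infty,\infty}(M,E)$. Evaluation at $V=0$ (equivalently, at the zero section $Z=0$) shows the leading term is $q_0(x,\xi,\sigma) = p_0(x,\xi,\sigma)^*$, since $\phi(x,0)=0$, the Jacobian is $1$, and the parallel transports both reduce to $\Id$.

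The main obstacle is the careful bookkeeping required to verify that the composite amplitude $r$ meets the hypotheses of Lemma \ref{BFresults2} with constants independent of $t$ and of the base point. Bounded geometry of $(M,F)$ and all auxiliary bundles is precisely what guarantees the uniform control of derivatives of $\phi$, the Jacobian, the bump function, and the parallel-transport coefficients needed to conclude, exactly as in the setting of \cite{BF}.
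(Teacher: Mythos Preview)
Your overall strategy matches the paper's: write $\langle P_t u, v\rangle$, move the integration to be based at $x'$, and recognize $P_t^*$ as an amplitude operator to which Lemma \ref{BFresults2} applies. However, there is a genuine omission in how you track the variables.

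The quantization $\theta^{\alpha}$ on $M$ (Definition \ref{Quantization}) integrates over $TM_x \times T^*M_x \times TF_x \times T^*F_x$, so there are \emph{two} space variables: $X \in TM_x$ and $Y \in TF_x$. When you pass to the adjoint and re-center at $x'$, both must be transported. Your amplitude $r(x,V,\xi,\sigma,t)$ records only the $TM$-variable $V$; the leafwise variable $Y$ has disappeared. In the paper's proof the amplitude is $r(x',\xi',\sigma',X',Y',t)$, obtained after separate changes of coordinates $T^*M_x \to T^*M_{x'}$ and $TF_x \times T^*F_x \to TF_{x'} \times T^*F_{x'}$ via parallel transport (and the latter transport is \emph{not} the same as the former followed by projection). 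Lemma \ref{BFresults2} is then invoked with the pair $(X',Y')$ playing the role of $Z$, and with no fiber variable $X$ at all. Without $Y'$ in your amplitude, the structure required by Lemma \ref{BFresults2} is not visible, and in particular you cannot read off the $\sigma'$-dependence correctly.

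Relatedly, the detour through Proposition \ref{ext} (``lifting to $TF$'') is both unnecessary and not quite coherent: Proposition \ref{ext} relates $\theta^{\alpha}$ and $\what{\theta}^{\alpha}$ on sections of the form $\what{u}$, but the adjoint pairing on $M$ does not naturally lift this way. The paper stays on $M$ throughout and applies Lemma \ref{BFresults2} in the form ``no $X$, and $Z$ is $(X',Y')$''. Once you reinstate the $Y'$-variable in your amplitude and drop the lifting step, your argument becomes the paper's.
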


\begin{proof}  
Suppose that $P_t \sim \theta^{\alpha}(p(t)_t)$, and that $u$ and $v$ are {{smooth compactly supported}} sections of $\cS_{\nu} \otimes E$.  Then (ignoring the $(2 \pi )^{-n-p}$)
\begin{multline*}
\blangle P_t u, v \brangle \,\, = \,\,
 \int_{M}    \int_{TM_x \times T^*M_x \times 
TF_x \times T^*F_x}  \hspace{-2.5cm}
 e^{-i\langle  X,\xi\rangle} e^{-i\langle  Y, \sigma -\zeta \rangle} \alpha(x,\exp_{x}(X)) \alpha(x,\exp_{x}(Y)) 
\\
\blangle p(x,\xi,\sigma,t)_t \mathcal{T}_{x,\exp_{x}(X)}^{-1}  u(\exp_{x}(X)), v(x) {\brangle} \,   dY d\sigma dX d\xi dx.
\end{multline*}
Set $x' = \exp_{x}(X)$ and  $X' = \exp_{x'}^{-1}(x)$.  
Since $\mathcal{T}$ (which is parallel translation along the geodesic $\exp_{x}(tX)$) is an isometry, and we may assume that $\alpha$ is symmetric, we have 
$$
\alpha(x, \exp_{x}(X)){\blangle} p(x,\xi,\sigma,t)_t \mathcal{T}_{x,\exp_{x}(X)}^{-1}  u(\exp_{x}(X)), v(x) {\brangle}\,\, = \,\,
$$
$$
\alpha(x', \exp_{x'}(X')){\blangle} u(x'), \mathcal{T}_{x,x'} p^*(x,\xi,\sigma,t)_t 
\mathcal{T}_{x',x} 
\mathcal{T}_{x',\exp_{x'}(X')}^{-1} 
v(\exp_{x'}(X')) {\brangle} \,\, = \,\,
$$
$$
{\blangle} u(x'), \mathcal{T}_{x,x'} p^*(x,\xi,\sigma,t)_t 
\mathcal{T}_{x',x} \alpha( x', \exp_{x'}(X'))
\mathcal{T}_{x',\exp_{x'}(X')}^{-1} 
v(\exp_{x'}(X')) {\brangle} \,\, = \,\,
$$
$$
{\blangle} u(x'), \mathcal{T}_{x,x'} p^*(x,\xi,\sigma,t)_t 
\mathcal{T}_{x',x} 
\overline{v}_{x'}(X') {\brangle}.
$$

On the support of $\alpha$, we identify $TM_x$ with $M$ and $M$ with $TM_{x'}$, using $\exp$.  Then  $dX dx = \mathcal{J}(x,X,x',X')  dx' dX'$, where the Jacobian $\mathcal{J}$ is a smooth function with all of its derivatives bounded on the support of $\alpha$.  Multiplying by a bump function which is $1$ on the support of $\alpha$ {{(with respect to a fixed atlas of normal coordinates)}}, we may assume that  $\mathcal{J}$ is zero off  a neighborhood of the support of $\alpha$.  We will incorporate any future Jacobians in $\mathcal{J}$.   Then 
$$
\blangle P_t u, v \brangle \,\, = \,\,   \int_{M}    \int_{TM_{x'} \times T^*M_x \times 
TF_x \times T^*F_x}  \hspace{-2.8cm}
e^{-i\langle X,\xi \rangle}e^{-i\langle  Y, \sigma -\zeta \rangle}   \alpha(x,\exp_{x}(Y))\mathcal{J}
{\blangle} u(x'), \mathcal{T}_{x,x'} p^*(x,\xi,\sigma,t)_t
\mathcal{T}_{x',x} \overline{v}_{x'}(X')  {\brangle}  \, dY d\sigma dX' d\xi dx'.
$$

Next we make the change of coordinates from $T^*M_x$ to $T^*M_{x'}$ using $\mathcal{T}$, and the change of coordinates from $TF_x \times T^*F_x$ to $TF_{x'} \times T^*F_{x'}$ using $\mathcal{T}$.   Note that this second $\mathcal{T}$ is parallel translation along the geodesic $\exp_{x}(tX)$ for the given bundles, so in general is not the same as the first $\mathcal{T}$ followed by projection.  Thus we have,
\begin{multline*}
\blangle P_t u, v \brangle  \,\, = \,\,    \int_{M}    \int_{TM_{x'} \times T^*M_x \times 
TF_x \times T^*F_x}  \hspace{-2.8cm}
e^{-i\langle  \mathcal{T}_{x,x'}(X),  \mathcal{T}_{x,x'}(\xi) \rangle} 
e^{-i\langle  \mathcal{T}_{x,x'}(Y), \mathcal{T}_{x,x'}(\sigma -\zeta) \rangle}  \alpha(x,\exp_{x}(Y))\mathcal{J}
\\
{\blangle} u(x'), \mathcal{T}_{x,x'} p^*(x,\xi,\sigma,t)_t 
\mathcal{T}_{x',x}\overline{v}_{x'}(X')  {\brangle} \,  dY d\sigma dX' d\xi  d x'.
\end{multline*}
Note that $ \mathcal{T}_{x,x'}(X) = -X'$.   Setting $\xi' = -\mathcal{T}_{x,x'}(\xi)$,  $\sigma' = \mathcal{T}_{x,x'}(\sigma - \zeta) + \zeta'$, and $Y' = \mathcal{T}_{x,x'}(Y)$, this becomes
\begin{multline*}
\int_{M}    \int_{TM_{x'} \times T^*M_{x'} \times 
TF_{T,x'} \times T^*F_{T,x'}}  \hspace{-3.8cm}  (-1)^n
e^{-i\langle  X',  \xi'  \rangle}   
e^{-i\langle Y', \sigma' -\zeta' \rangle}
\alpha(x,\exp_{x}(Y))\mathcal{J}\beta(x',\exp_{x'}(Y'))
\\
{\blangle} u(x'), \mathcal{T}_{x,x'} p^*(x,\xi,\sigma,t)_t 
\mathcal{T}_{x',x} \overline{v}_{x'}(X')  {\brangle} \,   dY' d\sigma'  d X' d\xi'  d x'.
\end{multline*}
In addition, we have introduced the function $\beta(x',\exp_{x'}(Y')$, which is a bump function which has value one whenever $\alpha(x,\exp_{x}(Y)) \neq 0$.   We are assured that such a $\beta$ exists because we can make the support of $\alpha$ as close to the diagonal as we please.  Because of Corollary \ref{ndalpha}, we may replace $\alpha$ in $\overline{v}_{x'}$ by $\beta$. 

Set
$$
r(x',\xi',\sigma',X',Y',t) \,\, = \,\,  [(-1)^n
\alpha(x,\exp_{x}(Y))\mathcal{J}
\mathcal{T}_{x,x'} p^*(x,\xi,\sigma,t)_t 
\mathcal{T}_{x',x}]_{t^{-1}},
$$
where $x = \exp_{x'}(X')$,  $X = - \mathcal{T}^{-1}_{x,x'}X'$, $Y = \mathcal{T}^{-1}_{x,x'}(Y')$, $\xi = -\mathcal{T}^{-1}_{x,x'}(\xi')$, $\sigma = \mathcal{T}^{-1}_{x,x'}(\sigma' - \zeta')  -  \pi_{TF} \mathcal{T}^{-1}_{x,x'}(\xi')$, and $\pi_{TF}:TM \to TF$ is the projection.  Then
$$
P_t^*(v)(x') \,\, = \,\,  (2\pi)^{-n-p}
 \int_{TM_{x'} \times T^*M_{x'} \times 
TF_{x'} \times T^*F_{x'}}  \hspace{-3.1cm}
 e^{-i\langle  X',\xi' \rangle} e^{-i\langle  Y', \sigma' -\zeta' \rangle}   r(x',\xi',\sigma',X',Y',t)_t \beta(x',\exp_{x'}(Y')) \overline{v}_{x'}(X')   \,  dY' d\sigma' d X' d\xi' . 
$$
Using the fact that $\mathcal{J} = 0$ off a neighborhood of the support of $\alpha$ and that $p$ has an asymptotic expansion, it follows immediately that  $r$ satisfies the conditions of  Lemma \ref{BFresults2} (where there is no $X$, and the role of $Z$ is played by $(X',Y'$)), and we are done.
\end{proof}

{{Recall that given symbols $p, q  \in SC^{\infty,\infty}( M,E)$, the symbol $a_0(p,q)$ is given by
$$
a_0(p,q) (x,\xi,\sigma)\,\, = \,\,   e^{-\frac{1}{4}\Omega_{\nu}(\pa/\pa (\xi,\sigma),\pa/\pa (\xi',\sigma'))}
p_0(x,\xi,\sigma) \wedge q_0(x, \xi',\sigma') \, |_{(\xi',\sigma')=(\xi,\sigma)}.
$$
See Definition \ref{Omega} for the definition of $e^{-\frac{1}{4}\Omega_{\nu}(\pa/\pa (\xi,\sigma),\pa/\pa (\xi',\sigma'))}
$.}}

{{A similar formula holds for $p,q \in SC^{\infty,\infty}(TF,\what{E}_{\maS})$.}}

\begin{definition}
Suppose that $P_t$ is an A$\Psi$DO with  leading symbol $p_0$. Then $P_t$ is asymptotically elliptic if the map $q \mapsto a_0(p_0,q)$ is invertible.
\end{definition}

{{Note that if $p  \in SC^{m,\ell}( M,E)$ and $q \in SC^{m',\ell'}( M,E)$ (respectively $p \in SC^{m,\ell}(TF,\what{E}_{\maS})$ and $q \in SC^{m',\ell'}(TF,\what{E}_{\maS})$),
then Theorem \ref{mainlemma} below (which does not depend on {{Proposition \ref{inverses})  }} implies that the symbol $a_0(p,\cdot)^{-1}(q)$ is an element of $SC^{m'-m,\ell'-\ell}( M,E)$  (respectively $SC^{m'-m,\ell'-\ell}(TF,\what{E}_{\maS})$).}}
It follows easily that we may use the proof of Theorem 3.1 in \cite{BF}, mutatis mutandis, to prove the following.
\begin{proposition}\label{inverses}
If $P_t$ is an asymptotically elliptic operator, then there is an {{ A$\Psi$DO}}  $Q_t$ such that $P_t \circ Q_t \sim I$.
\end{proposition}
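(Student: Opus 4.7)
The plan is to mimic the standard parametrix construction, now carried out in the bigraded asymptotic symbol calculus developed in the paper. The starting point is to set $q_0 = a_0(p_0,\,\cdot\,)^{-1}(1)$, which exists by the asymptotic ellipticity hypothesis and which, as remarked just before the statement, lies in $SC^{-m,-\ell}(M,E)$ (respectively $SC^{-m,-\ell}(TF,\widehat E_{\mathcal S})$) when $p_0 \in SC^{m,\ell}$. I would then form the preliminary A$\Psi$DO $Q_t^{(0)} = \widehat\theta^{\alpha}(q_0)$ (viewing $q_0$ as a constant asymptotic symbol) and apply Theorem \ref{mainlemma} to the composition $P_t \circ Q_t^{(0)}$. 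By construction, its leading symbol is $a_0(p_0,q_0) = 1$, so Theorem \ref{mainlemma} together with Lemma \ref{BFresults3} gives $P_t \circ Q_t^{(0)} \sim I + tR_t^{(1)}$ for some A$\Psi$DO $R_t^{(1)}$ whose leading symbol $r_1$ has grading strictly less than $0$.

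The inductive step is then the classical one: having achieved $P_t \circ (Q_t^{(0)} + \cdots + Q_t^{(k-1)}) \sim I + t^{k} R_t^{(k)}$ with $R_t^{(k)}$ an A$\Psi$DO of leading symbol $r_k$ strictly lower bigraded than its predecessor, I set $q_k = -a_0(p_0,\,\cdot\,)^{-1}(r_k)$, put $Q_t^{(k)} = t^{k}\widehat\theta^{\alpha}(q_k)$, and apply Theorem \ref{mainlemma} once more. The leading symbol of $P_t \circ Q_t^{(k)}$ is then $t^{k}a_0(p_0,q_k) = -t^{k}r_k$, which cancels the obstruction, and the composition law together with the bookkeeping in part $(1)$ of Theorem \ref{comp} guarantees that the residual error drops one further step in the bigrading at the cost of one more power of $t$.

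The collection $\{q_k\}_{k\ge 0}$ is a sequence of symbols with $q_k \in SC^{-m-k_1,-\ell-k_2}$ with $k_1+k_2 = k$. By the asymptotic summation lemma proved in Section \ref{Asymptotic} (the ``reasonably standard'' Borel-type lemma just after the definition of asymptotic symbol), there exists a single asymptotic symbol $q(t)$ with $q(t) \sim \sum_{k\ge 0} t^{k} q_k$; set $Q_t = \widehat\theta^{\alpha}(q(t)_t)$, which is an A$\Psi$DO. Invoking Theorem \ref{mainlemma} on $P_t\circ Q_t$, one sees that for every $N$ the composition agrees with $P_t\circ(Q_t^{(0)} + \cdots + Q_t^{(N)})$ modulo an operator of order $t^{N+1}$ times a bounded-family operator of arbitrarily low bigrading, so $P_t\circ Q_t - I$ is an asymptotically zero family in the sense of Lemma \ref{simOps2}. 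This gives $P_t\circ Q_t \sim I$.

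The main obstacle I expect is not the algebraic cancellation (which is formal once Theorem \ref{mainlemma} is in hand) but checking that each inversion $a_0(p_0,\,\cdot\,)^{-1}$ behaves well on the bigraded symbol spaces with \emph{uniform} semi-norm control, so that the resulting $\{q_k\}$ feed into the Borel summation lemma correctly; this is where the bounded-geometry hypothesis from Section \ref{symbs} is doing essential work, and one has to verify that the exponential differential operator $e^{-\frac14\Omega_\nu(\partial/\partial(\xi,\sigma),\partial/\partial(\xi',\sigma'))}$ appearing in $a_0$ is invertible on the relevant symbol classes with uniform estimates, exactly as in \cite{BF} but now using both the $\xi$ and $\sigma$ variables.
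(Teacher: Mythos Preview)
Your proposal is correct and is precisely the argument the paper has in mind: the paper's proof consists of the single sentence ``we may use the proof of Theorem 3.1 in \cite{BF}, mutatis mutandis,'' and what you have written is exactly that Block--Fox parametrix construction transplanted into the bigraded setting, with the correct invocations of Theorem \ref{mainlemma}, the Borel summation lemma, and Lemma \ref{simOps2}. The only minor slip is notational: your $Q_t^{(0)} = \widehat\theta^{\alpha}(q_0)$ should carry the rescaling, i.e.\ $Q_t^{(0)} = \theta^{\alpha}((q_0)_t)$, and likewise for the later $Q_t^{(k)}$; also, the uniform-bigrading concern you flag at the end is exactly what the paper disposes of in the sentence immediately preceding the proposition.
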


\section{The main theorem}

In this section we prove our second {{main}}  result, which is the extension of  Theorem \ref{comp} to  $SC^{\infty,\infty} (M, E)$. This theorem is originally due to {{Getzler and was extended by Block-Fox}}.  
 
\begin{theorem}[Theorem 3.5 of \cite{Getz}, Lemma 3.10 of \cite{BF}]\label{mainlemma}

Let $p(t)$ and $q(t)$ be asymptotic symbols in $SC^{\infty,\infty}(M,E)$, with associated  A$\Psi$DOs $P_t = \theta^{\alpha}(p(t)_t)$ and $Q_t = \theta^{\alpha}(q(t)_t)$.  
Then
\begin{enumerate}
\item  $P_t \circ Q_t$ is an A$\Psi$DO.
\medskip
\item  The leading symbol of $P_t \circ Q_t$ is $a_0(p_0,q_0)$, where $p_0$ and $q_0$ are the leading symbols of $p$ and  $q$.  In particular,
$$
a_0(p_0,q_0)(x, \xi, \sigma)  \,\, = \,\,  e^{-\frac{1}{4}\Omega_{\nu}(\pa/\pa (\xi,\sigma),\pa/\pa (\xi',\sigma')) }p_0(x,\xi,\sigma) \wedge q_0(x, \xi',\sigma') \, |_{(\xi',\sigma') =  (\xi,\sigma)}.
$$
\end{enumerate}
{{A similar statement holds for asymptotic symbols $p(t), q(t) \in SC^{\infty,\infty}(TF,\what{E}_{\maS})$.}}
\end{theorem}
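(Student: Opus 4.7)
The plan is to reduce the problem to a single oscillatory integral on $TF$ and then invoke Lemma \ref{BFresults2} to recognize it as an A$\Psi$DO, with leading symbol read off from the top-order term. First I would use Proposition \ref{ext} to rewrite $P_t \circ Q_t$ acting on a section $u$ of $\cS_{\nu} \otimes E$ as $\what{\theta}^{\alpha}(\what{p}_t) \circ \what{\theta}^{\alpha}(\what{q}_t)$ acting on $\what{u}$, evaluated along the zero section of $TF$. The advantage, as emphasized after Proposition \ref{ext}, is that on $TF$ the variables $\xi$ and $\sigma$ correspond to genuinely independent directions in the ambient geometry, so the two quantizations interact cleanly.

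Next I would write out $\what{\theta}^{\alpha}(\what{p}_t) \what{\theta}^{\alpha}(\what{q}_t)(\what{u})(x,X)$ as an iterated integral. After substituting the defining formula for $\what{\theta}^{\alpha}(\what{q}_t)(\what{u})$ inside the outer integral, one obtains a double oscillatory integral with exponential $e^{-i\langle Z, (\xi,\sigma)\rangle}e^{-i\langle W, (\xi',\sigma')\rangle}$ and amplitude involving $\what{p}(x,X,\xi,\sigma,t)_t$, $\mathcal{T}_{(x,X),(x',X')}^{-1}\what{q}(x',X',\xi',\sigma',t)_t \mathcal{T}_{(x,X),(x',X')}$, a product of cutoffs, and a Jacobian. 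The main move is a change of variables that combines the two phases: set $(x'',X'') = \exp_{(x,X)}(Z+W)$ so that the composition becomes an oscillatory integral of the form
\[
\int e^{-i\langle Z'', (\xi,\sigma)\rangle}r(x,X,Z'',\xi,\sigma,t)_t\,\overline{u}_{(x,X)}(Z'')\,dZ''\,d\sigma\,d\xi,
\]
where $r$ has an asymptotic expansion $r \sim \sum_k t^k r_k$ obtained by integrating out the internal $W$-variable, parallel transporting $\what{q}$ back along a geodesic in $TF$, and Taylor-expanding in the phase variables. The fact that $F$ is a foliation enters here, as remarked after Theorem \ref{mainlemma}, because it gives the uniform control of the changes of coordinates we need. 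Having massaged the composition into the standard form, Lemma \ref{BFresults2} applies directly and yields part (1): $P_t \circ Q_t \sim \what{\theta}^{\alpha}(\what{r}_t)$ with $\what{r}$ an asymptotic symbol.

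For part (2), Lemma \ref{BFresults2} moreover identifies the leading symbol as $\what{r}_0(x,X,0,\xi,\sigma)$. To match this with $a_0(p_0, q_0)$, I would first observe that when $p_0$ and $q_0$ are polynomial in $\xi$ and $\sigma$, the expression above collapses to a finite sum, the oscillatory integral reduces to evaluation, and one is in exactly the situation of Theorem \ref{comp}, which gives the formula $a_0(p_0,q_0)$ directly via Proposition \ref{ABPprop}. For the general case, I would approximate $p_0$ and $q_0$ by polynomial symbols in the topology of $SC^{m,\ell}$ (modulo symbols of lower grading), apply Theorem \ref{comp} to the approximants, and pass to the limit using the continuity of the Taylor-expansion formula of Lemma \ref{BFresults2} in the symbol seminorms. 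The $X$-dependence that was introduced on $TF$ drops out because $p_0$ and $q_0$ came from symbols on $M$, so $\what{r}_0(x,X,0,\xi,\sigma) = a_0(p_0,q_0)(x,\xi,\sigma)$; evaluating along the zero section concludes the argument, and exactly the same steps prove the $TF$-statement. The main obstacle I expect is the combinatorial bookkeeping in the change-of-variables step: one must track how parallel transport along the two geodesic segments $(x,X) \leadsto (x',X') \leadsto (x'',X'')$ relates to parallel transport along the single geodesic $(x,X) \leadsto (x'',X'')$, and this defect is precisely where Proposition \ref{ABPprop} of Atiyah-Bott-Patodi enters to produce the curvature operator $\Omega_{\nu}(\pa/\pa(\xi,\sigma),\pa/\pa(\xi',\sigma'))$ in the final formula.
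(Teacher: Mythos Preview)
Your plan for part (1) is essentially what the paper does: pass to $TF$ via Proposition \ref{ext}, write out the iterated oscillatory integral, change variables to cast the composition as an operator of the type in Lemma \ref{BFresults2}, and conclude. Two remarks: first, the change of variables is more delicate than ``set $(x'',X'')=\exp_{(x,X)}(Z+W)$''---the paper must introduce the diffeomorphism $\phi_Z(V)=\exp_{(x,X)}^{-1}\exp_{(x_1,X_1)}(\mathcal{T}_{x,x_1}V)$ and compute $\phi_Z^{-1}(V)=-(\I+W_Z)Z+A_{Z,V}V$, where $W_Z$ encodes the failure of leafwise parallel transport in $TF$ to agree with ambient parallel transport in $TM$; this is where bounded geometry and the foliation hypothesis give the uniform control you allude to. Second, the symbol estimates for $r$ require a Treves-type argument (Peetre's inequality, $(1+\Delta_Z)^N$ trick) that you should name rather than fold into ``Lemma \ref{BFresults2} applies directly.''

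There is, however, a genuine gap in your argument for part (2). You propose to approximate $p_0$ and $q_0$ by polynomials in the topology of $SC^{m,\ell}$ and pass to the limit. Polynomials in $(\xi,\sigma)$ are \emph{not} dense in $SC^{m,\ell}$ for the symbol seminorms: a polynomial of degree exceeding $m$ in $\xi$ is not even in $S^{m,\ell}$, and Taylor truncations do not converge to a general symbol in these seminorms. So the continuity argument fails. The paper takes a different route. After establishing part (1), it uses Lemma \ref{BFresults3} to identify $\what r(t)\sim\varsigma(\what\theta^\alpha(p_t)\circ\what\theta^\alpha(q_t))_{t^{-1}}$, then introduces a modified exponential $\wtit{\exp}$ on $TF$ and proves it is the honest geodesic exponential of a pullback connection; this places one in the framework of Widom's complete symbol calculus \cite{Wid1,Wid2}, which yields an asymptotic expansion $\sum t^n\what a_n(p,q)$ where the $\what a_n$ are \emph{differential operators} on pairs of symbols. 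Differential operators are determined by their action on polynomials, so comparison with Theorem \ref{comp} forces $\what a_n=a_n$ and in particular the leading term is $a_0(p_0,q_0)$. The key idea you are missing is this structural step: one does not approximate symbols by polynomials, but rather shows that the coefficients in the asymptotic composition formula are differential operators and then identifies them via the polynomial case.
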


{{
\begin{remark}
It follows immediately that
$$
a_0(p_0,q_0)= \lim_{t \to 0} \varsigma(P_t \circ Q_t)_{t^{-1}}.
$$
\end{remark}
}}
\begin{remark}
In Theorem \ref{comp}, $p$ is the leading symbol of $P_t = \theta^{\alpha}(p_t)$, that is $p_0 = p$ in that theorem.
\end{remark}

Our proof follows that of \cite{BF}.  However, since our situation is more complicated, the proof is also more complicated.

\begin{proof}   We will be working with  $P_t = \what{\theta}^{\alpha}(p_t)$ and  $Q_t = \what{\theta}^{\alpha}(q_t)$, and then we will apply  Proposition \ref{ext} to get the result.  Essentially the same proof works for asymptotic symbols  $p(t), q(t) \in SC^{\infty,\infty}(TF,\what{E}_{\maS})$.  For simplicity, we will ignore the constants.  

Let $(x,X) \in TF$, and $u$ be a section of $\what{E}_{\cS}$.  Then {{we have the following equation}}
\begin{Equation}\label{comp1} 
 \hspace{0.25cm}
$\dd P_t \circ Q_t(u)(x,X) \,\, = \,\, 
\int_{T^*(TF)_{(x,X)}  \times T(TF)_{(x,X)}} \hspace{-2.7cm}
e^{-i\langle  Z,(\xi,\sigma)\rangle} p(x,\xi,\sigma ,t)_t \, \wtit{\alpha}((x,X),(x_1,X_1))
\mathcal{T}^{-1}_{x,x_1}$
\end{Equation} 
\hspace{1.5cm}
$\dd\Big{(}
\int_{T^*(TF)_{(x_1,X_1)}  \times T(TF)_{(x_1,X_1)}} \hspace{-3.2cm}
e^{-i\langle  Y,(\kappa,\varrho) \rangle} q(x_1,\kappa,\varrho,t)_t  \,
\wtit{\alpha}((x_1,X_1),(x_2,X_2))  \mathcal{T}^{-1}_{x_1,x_2}
u(\exp_{(x_1,X_1)}(Y))  dY d\varrho  d\kappa \Big{)}
 dZ d\sigma d\xi,
$

\medskip\noindent
where $(x_1,X_1) = \exp_{(x,X)}(Z)$, and $(x_2,X_2) = \exp_{(x_1,X_1)}(Y)$.

\medskip\noindent We want to write this as  
$$
\int_{T^*(TF)_{(x,X)}  \times T(TF)_{(x,X)}} \hspace{-2.7cm}
e^{-i\langle V,(\lambda,\mu)\rangle}r(x,X,\lambda,\mu,t)_t \, \beta((x,X),(x',X'))\mathcal{T}^{-1}_{x,x'}u(\exp_{(x,X)}(V))\,   dV   d\mu d\lambda, 
$$
where $(x',X') =  \exp_{(x,X)}(V)$, $r(t)$ is an asymptotic symbol, and $\beta$ is a bump function. Then we need to compute the leading symbol of $r(t)$.  To do the first, we make several changes of variables.  Again for simplicity, the products of the various Jacobians associated with our changes of variables will be denoted simply $\maJ$.   {{It is of course possible to keep track of the variables on which the various $\maJ$ depend, but this does not clarify the computation. What is important here is to check that because of bounded geometry and the fact that we can make the support of $\alpha$ as close to the diagonal as we please, all the derivatives of all the $\maJ$ are uniformly bounded.}}   

The map $\mathcal{T}_{x, x_1}:TM_x  \to TM_{x_1}$ and its dual $\mathcal{T}_{x, x_1}:T^*M_x  \to T^*M_{x_1}$, which are parallel translation{s} along the geodesic $t \mapsto \exp_{x}(tZ_1 + tZ_2, tZ_{\nu})$, are isometries.  We extend these  to all of  $T(TF)_{(x,X)} $ and $T^*(TF)_{(x,X)}$ as follows.   Given  $W = (W_1,W_{\nu},W_2) \in T(TF)_{(x,X)}$, set
$$
\mathcal{T}_{x,x_1}(W) \,\, = \,\, ( \mathcal{T}_{x, x_1}(W_1,W_{\nu}),0) +  (0,\mathcal{T}_{x, x_1}(0,W_2))  \,\, \in \,\,  T(TF)_{(x_1,X_1)},
$$
where $(0,W_2)$ and  $\mathcal{T}_{x, x_1}(0,W_2)$ are in $TM = \nu \oplus TF$ considered as the second two factors in $T(TF) \simeq  TF \oplus  \nu \oplus TF$.  Since $\mathcal{T}_{x,x} = \I$, $\mathcal{T}_{x,x_1}$ is an isomorphism for $x_1$ sufficiently close to $x$.  We then have the  dual map 
$$
\mathcal{T}_{x,x_1}:T^*(TF)_{(x,X)}  \to T^*(TF)_{(x_1,X_1)},
$$
so by definition, $\mathcal{T}$ preserves  the pairing of these bundles.  Set $V = \mathcal{T}_{x,x_1}^{-1}(Y)$, and $(\lambda,\mu) = \mathcal{T}_{x,x_1}^{-1}(\kappa, \varrho)$.  Then $\langle  V,(\lambda,\mu)\rangle \, = \, \langle  Y,(\kappa,\varrho)\rangle$, and 
$\mathcal{T}_{x,x_1}^{-1}(dY d\varrho d\kappa)= \mathcal{J}dV  d\mu d\lambda$, so the right side of Equation \ref{comp1} becomes
$$
\int_{T^*(TF)_{(x,X)}  \times T(TF)_{(x,X)}}  \hspace{-1.7 cm} e^{-i\langle  V,(\lambda,\mu)\rangle }  
\int_{T^*(TF)_{(x,X)}  \times T(TF)_{(x,X)}}  \hspace{-2.5 cm}  e^{-i\langle  Z,(\xi,\sigma)\rangle} p(x,\xi,\sigma,t)_t   \, \wtit{\alpha}((x,X), \exp_{(x,X)}(Z) )\mathcal{T}^{-1}_{x,x_1} \Big(q(x_1,\mathcal{T}_{x,x_1}(\lambda,\mu),t)_t\Big)   
$$
$$
\wtit{\alpha}((x_1,X_1),\exp_{(x_1,X_1)}(\mathcal{T}_{x,x_1}(V) ) )\mathcal{T}_{0,1,2}     \mathcal{T}^{-1}_{x,x_2}  (u(\exp_{(x,X)}(\phi_Z(V)))\, \mathcal{J} dZ  d\sigma d\xi dV  d\mu d\lambda.
$$
Here $\mathcal{T}_{0,1,2} \,\, = \,\,  \mathcal{T}^{-1}_{x,x_1}\mathcal{T}^{-1}_{x_1,x_2}\mathcal{T}_{x,x_2}$, and $\phi_Z:T(TF)_{(x,X)} \to T(TF)_{(x,X)}$  is
$$
\phi_Z(V) \,\, = \,\, \exp^{-1}_{(x,X)}(\exp_{(x_1,X_1)}(\mathcal{T}_{x,x_1} (V))) \,\, = \,\,  (W_1,W_{\nu},W_2),
$$
where
$$
(W_1+W_2,W_{\nu}) = \exp^{-1}_{x}\exp_{x_1}\mathcal{T}_{x,x_1}  (V_1+V_2,V_{\nu}).
$$
This is because $\mathcal{T}_{x,x_1}  (V_1,V_{\nu},V_2) = (Y_1, Y_{\nu},Y_2)$, so $(Y_1 + Y_2, Y_{\nu}) =\mathcal{T}_{x,x_1}  (V_1+V_2,V_{\nu})$, and
$$
W_2 = -X +  \mathcal{T}^{-1}_{x,x_2} ( \mathcal{T}_{x_1,x_2}(X_1 + \mathcal{T}^{(2)}_{x,x_1} (0,V_2))),
$$
where $\mathcal{T}^{(2)}_{x,x_1}$ is the composition $ \nu \oplus TF \stackrel{\mathcal{T}_{x,x_1}}{\to}  \nu \oplus TF \stackrel{\pi}{\to} TF$, and $\pi$ is the projection.

Note that $\phi_Z(V)$ depends on $X$ in general, that $\phi_0  = \I$, and we are only interested in $\phi_Z(V)$ for 
$$
\wtit{\alpha}((x,X), \exp_{(x,X)}(Z) ) \neq 0 \quad \text{and} \quad 
\wtit{\alpha}((x_1,X_1),\exp_{(x_1,X_1)}(\mathcal{T}_{x,x_1}(V) ) ) \neq 0,
$$
that is for $Z$ and $V$ small.   We can control the size of the relevant $Z$ and $V$ by making the support of $\alpha$ close to the diagonal.   Thus, we may assume that $\phi_Z$ is a diffeomorphism on  a neighborhood of $V = 0$.   According to \cite{Gilkey}, p.\ 25 bottom,   $\phi_Z^{-1}(V) = \phi_Z^{-1}(0) + A_{Z,V}V$, where $A_{Z,V}$ is a linear map which is invertible in a neighborhood of $V=0$.  For $Z, \what{Z} \in T(TF)_{(x,X)}$, with $Z$ close to zero, define the linear map $W_Z$ on $T(TF)_{(x,X)}$ to be 
$$
W_Z(\what{Z}) \,\, = \,\, (\what{W}, 0, -\what{W}),
$$
where 
$$
\what{W} \,\, = \,\, 
\what{Z_2} - \Big(\mathcal{T}^{(2)}_{x, x_1} \, | \, _{TF_{(x,X)}}\Big)^{-1}\mathcal{T}_{x, x_1}(\what{Z_2}).$$ 
Note that in general this is not zero, since $\mathcal{T}_{x, x_1}$ is parallel translation in the bundle $TF$, while $\mathcal{T}^{(2)}_{x, x_1} \, | \, _{TF_{(x,X)}}$ is parallel translation in $TM$ followed by projection to $TF$.  It will be zero if $F$ is totally geodesic, but this is quite rare.  It is a straight forward computation to show that $ \phi_Z^{-1}(0) =  -Z - W_Z Z$, (solve $\phi_Z(V) = 0$ directly from the definition of $\phi_Z$, and use the fact that $x_2 = \exp_{x_1}(\mathcal{T}_{x,x_1}(V_1 + V_2, V_{\nu})) = x$),  so 
$$
 \phi_Z^{-1}(V) \,\, = \,\, -Z \,\, - \,\, W_Z Z \,\, + \,\, A_{Z,V}V
 \,\, = \,\, -(\I \,\, + W_Z)Z \,\, + \,\, A_{Z,V }V.
$$
We note for later use, that if $Z=0$, then $x_1 = x$, and so $W_0 =0$.

Make the change of variables $V \to \phi_Z^{-1}(V)$  to get
$$
\int  e^{-i\langle  \phi_Z^{-1}(V),(\lambda,\mu)\rangle}  
\int   e^{-i\langle  Z,(\xi,\sigma)\rangle} p(x,\xi,\sigma,t)_t  \, \wtit{\alpha}((x,X), \exp_{(x,X)}(Z))
\mathcal{T}^{-1}_{x,x_1} \Big(q(x_1,\mathcal{T}_{x,x_1}(\lambda,\mu),t)_t\Big)   
$$
$$
\wtit{\alpha}((x_1,X_1),\exp_{(x_1,X_1)}(\mathcal{T}_{x,x_1}( \phi_Z^{-1}(V)) ) )\mathcal{T}_{0,1,2}     \mathcal{T}^{-1}_{x,x_2}  (u(\exp_{(x,X)}(V)))\, \mathcal{J} dZ  d\sigma d\xi dV  d\mu d\lambda.
$$
{{Set
$$
\what{ \alpha}(x,X,Z,V) = \wtit{\alpha}((x,X), \exp_{(x,X)}(Z))
\wtit{\alpha}((x_1,X_1),\exp_{(x_1,X_1)}(\mathcal{T}_{x,x_1}( \phi_Z^{-1}(V)) ) ).
$$
Choose a bump function $\beta$ on $TF \times TF$ which is  supported in a neighborhood of the diagonal so that $ \beta((x,X), \exp_{(x,X)}(V)) = 1$ whenever $\what{ \alpha} \neq 0$.  We are assured that such a $\beta$ exists by choosing the support of $\alpha$ sufficiently close to the diagonal.}}
Replacing $\phi_Z^{-1}(V)$ by $-(\I + W_Z)Z+ A_{Z,V}V$ and making the change of variables $(\lambda,\mu) \to [A_{Z,V}^{-1}]^*(\lambda,\mu)$ gives
$$
\int  e^{-i\langle  V,(\lambda,\mu)\rangle}  r(x,X,V,\lambda,\mu,t)_t \, \beta((x,X), \exp_{(x,X)}(V))
\mathcal{T}^{-1}_{x,x_2}  (u(\exp_{(x,X)}(V))   dV  d\mu d\lambda,
$$
where 
\begin{multline*}
r(x,X, V,\lambda,\mu,t)_t  \,\, = \,\, 
\int   e^{i\langle  (\I + W_Z)Z, [A_{V,Z}^{-1}]^*  (\lambda,\mu)\rangle} 
e^{-i\langle  Z,(\xi,\sigma) \rangle}
p(x,\xi,\sigma,t)_t 
\\
\what{\alpha}(x,X,Z,V)
\mathcal{T}^{-1}_{x,x_1}\Big(q(x_1,\mathcal{T}_{x,x_1}[A_{Z,V}^{-1}]^*(\lambda,\mu),t)_t \Big)   \mathcal{T}_{0,1,2} \, \maJ  dZ  d\sigma d\xi  \,\, = \,\, 
\end{multline*}
\begin{multline*}
\int   e^{-i\langle Z,(\xi,\sigma) -(\I + W_Z)^* [A_{V,Z}^{-1}]^*  (\lambda,\mu)\rangle} 
 p(x,\xi,\sigma,t)_t 
\\
\what{\alpha}(x,X,Z,V)
\mathcal{T}^{-1}_{x,x_1}\Big(q(x_1,\mathcal{T}_{x,x_1}[A_{Z,V}^{-1}]^*(\lambda,\mu),t)_t \Big)   \mathcal{T}_{0,1,2} \, \maJ  dZ d\sigma d\xi.
\end{multline*}

\medskip
Next make the change of variables $(\xi,\sigma) \to (\xi,\sigma) + (\I + W_Z)^* [A_{V,Z}^{-1}]^*  (\lambda,\mu)$ to get 
\begin{multline*}
r(x,X, V,\lambda,\mu,t)_t  \,\, = \,\, 
\int   e^{-i\langle Z,(\xi,\sigma)\rangle} 
 p(x,(\xi,\sigma) + (\I + W_Z)^* [A_{V,Z}^{-1}]^*  (\lambda,\mu),t)_t 
\\
\what{\alpha}(x,X,Z,V)
\mathcal{T}^{-1}_{x,x_1}\Big(q(x_1,\mathcal{T}_{x,x_1}[A_{Z,V}^{-1}]^*(\lambda,\mu),t)_t \Big)   \mathcal{T}_{0,1,2} \, \maJ  dZ  d\sigma d\xi.
\end{multline*}

We now follow \cite{Treves}, the proof of Theorem 3.2,  to  show that $r$ satisfies the hypotheses of Lemma \ref{BFresults2}, (with $Z$ replaced by $V$ in the lemma) so it determines a symbol $\what{r}(t)$ with $P_t \circ Q_t = \what{\theta}^{\beta}(\what{r}(t)_t)$.   We give a brief outline, and leave the details to the reader, of how to obtain an estimate of the form 
$$
||  \,   \pa^{\alpha}_{\lambda} \pa^{\beta}_{\mu} \pa^{\delta}_{(x,X,V)}  r(x,X,V,\lambda,\mu,t) \, || \,\, \leq \,\, 
C_{\alpha,\beta,\delta}(1 + |\lambda|)^{m- |\alpha|}(1 + |\mu|)^{\ell-|\beta|}{,}
$$
as in  Lemma \ref{BFresults2}.  First note that the derivatives with respect to $x$, $X$,  and $V$ of $\what{\alpha}(x,X,Z,V)$,   and  $\maJ$ are uniformly bounded, so we may dispense with them.  We may assume that $0 \leq t \leq 1$, since we are only interested in $t$ in that interval.  Next note the crucial facts that $\phi_0 = \I$ and $W_0 = 0$, which imply that   $[A_{0,V}^{-1}]^* = \I$ and $\I + W_0 = \I$.   In addition,  $ \mathcal{T}_{x,x} = \I$.  So, by making the support of $\alpha$ close to the diagonal, we can make $[A_{Z,V}^{-1}]^*$ as close to $\I$ as we please and all its derivatives uniformly bounded.  Similar remarks apply to $\I + W_Z$, $ \mathcal{T}_{x,x_1}$,  $\mathcal{T}^{-1}_{x,x_1}$, and  $\mathcal{T}_{0,1,2}$.   So,  we may assume that $[A_{Z,V}^{-1}]^*, I + W_Z, \mathcal{T}_{x,x_1}, \mathcal{T}^{-1}_{x,x_1},$ and  $\mathcal{T}_{0,1,2}$ are the identity when computing estimates.  Thus we may assume that $r(t)_t$ has the form  
$$
r(x,X,V,\lambda,\mu,t)_t  \,\, = \,\, 
\int   e^{-i\langle  Z,(\xi,\sigma) \rangle}   \what{\alpha}(Z)  p(x,\xi +\lambda,\sigma + \mu,t)_t  \,
q(x_1,  \lambda,\mu,t)_t   \,  dZ d\sigma d\xi   \,\, = \,\, 
$$
$$
\int   e^{-i\langle  Z,(\xi,\sigma) \rangle}  \what{\alpha}(Z)   p(x, t\xi +t\lambda, t\sigma + t\mu,t)
q(x_1, t \lambda, t\mu,t)     dZ d\sigma d\xi.
$$
Here we have written $\what{\alpha}(Z)$ for $\what{\alpha}(x,X,Z,V)$.

{{The astute reader will note that this equation is incorrect, unless both $p$ and $q$ have no form components. However, since  we are only interested in $t$ for $t \in [0,1]$, we may replace the missing terms by $1$, and no harm is done to our estimates. More precisely, if either has form components, then the expression after the equals sign is missing terms of the form $t$ to a positive power.  We need to find estimates on  $r = [r_t]_{t^{-1}}$ and its derivatives, which (because we are using Clifford multiplication and not differential form multiplication) may also be missing terms of the form $t$ to a positive power. This is due to forms which would disappear under form multiplication, but do not under Clifford multiplication.  For example, $[dx_t \cdot dx_t]_{t^{-1}} = [-t^2||dx||^2]_{t^{-1}}  = -t^2 ||dx||^2$, while $[dx_t \wedge dx_t]_{t^{-1}} = [0]_{t^{-1}}  = 0$.  So, for $t\in [0, 1]$ we can easily estimate the extra terms.  }}

Now $r = [r_t]_{t^{-1}}$, so we have finally that 
$$
r(x,X,V,\lambda,\mu,t)  \,\, = \,\, 
\int   e^{-i\langle  Z,(\xi,\sigma) \rangle} \what{\alpha}(Z)  p(x, t\xi +\lambda, t\sigma + \mu,t)
q(x_1,  \lambda,\mu,t)     dZ d\sigma d\xi.
$$
  
The derivatives with respect to $x$ of $p$ are uniformly bounded, so we may ignore them in the computation.  So now consider $\pa^{\alpha}_{\lambda} \pa^{\beta}_{\mu} r(x,X, V,\lambda,\mu,t)$, which is a finite sum of terms which are constants times terms of the form
$$
\int   e^{-i\langle  Z,(\xi,\sigma) \rangle}    \what{\alpha}(Z)\pa^{\alpha_1}_{\lambda} \pa^{\beta_1}_{\mu}p(x,  t\xi +\lambda, t\sigma + \mu,t)
\pa^{\alpha_2}_{\lambda} \pa^{\beta_2}_{\mu}q(x_1,  \lambda,\mu,t)     dZ d\sigma d\xi
$$
where the $\alpha_i$ add up to $\alpha$, and  the $\beta_i$ add up to $\beta$.  In what follows, we will again ignore constants.  Note that, up to a constant, $e^{-i\langle  Z,(\xi,\sigma) \rangle}= (1 + |(\xi,\sigma)|^2)^{-N} (1 + \Delta_Z)^N   (e^{-i\langle  Z,(\xi,\sigma) \rangle})$, where $N$ is an integer to be specified soon.  Thus the above equals 
$$
\int  (1 + |(\xi,\sigma)|^2)^{-N} \Big[(1 + \Delta_Z)^N   e^{-i\langle  Z,(\xi,\sigma) \rangle}  \Big]  \what{\alpha}(Z)
\pa^{\alpha_1}_{\lambda} \pa^{\beta_1}_{\mu}p(x, t\xi+ \lambda, t\sigma+ \mu,t) 
\pa^{\alpha_2}_{\lambda}\pa^{\beta_2}_{\mu}q(x_1,\lambda,\mu,t)  dZ d\sigma d\xi,
$$
and integration by parts gives something of the form
$$
\int  (1 + |(\xi,\sigma)|^2)^{-N} e^{-i\langle  Z,(\xi,\sigma) \rangle}   \Big[(1 + \Delta_Z)^N   \what{\alpha}(Z) \Big]  
\pa^{\alpha_1}_{\lambda} \pa^{\beta_1}_{\mu}p(x, t\xi+ \lambda, t\sigma+ \mu,t) 
\pa^{\alpha_2}_{\lambda}\pa^{\beta_2}_{\mu}q(x_1,\lambda,\mu,t)  dZ d\sigma d\xi.
$$
 Note that we are integrating $Z$ over {{compact sets whose diameters and volumes are uniformly bounded,}} namely where $\what{\alpha}(Z) = \what{\alpha}(x,X,Z,V) \neq 0$, and we are integrating a uniformly bounded function $e^{-i\langle  Z,(\xi,\sigma) \rangle}  (1 + \Delta_Z)^N   \what{\alpha}(Z)$.  Suppose that  $p \in SC^{m,\ell}(M,E)$ and $q \in SC^{m',\ell'}(M,E)$.   Then the integral is bounded by a multiple of 
$$
\int (1 + |(\xi,\sigma)|^2)^{-N}  (1 + |t\xi + \lambda|)^{m - |\alpha_1|}(1 +|t\sigma+ \mu|)^{\ell - |\beta_1|}  (1 + |\lambda|)^{m'- |\alpha_2|}(1 + |\mu|)^{\ell'-|\beta_2|}   d\sigma d\xi.   
$$
Now Peetre's inequality gives that
$$
(1 + |t\xi + \lambda|)^{m - |\alpha_1|} \,\, \leq \,\,  C(1 + |t\xi|)^{|m - |\alpha_1||}(1 + |\lambda|)^{m - |\alpha_1|},
$$ 
and similarly  $(1 +|t\sigma+ \mu|)^{\ell - |\beta_1|} \leq  C (1 +|t\sigma|)^{|\ell - |\beta_1||}(1 +| \mu|)^{\ell - |\beta_1|}$.  Thus the integral is bounded by
$$
(1 + |\lambda|)^{m+m'- |\alpha|}(1 + |\mu|)^{\ell+\ell'-|\beta|} 
\int  (1 + |(\xi,\sigma)|^2)^{-N}(1 + |t\xi|)^{|m - |\alpha_1||}(1 +|t\sigma|)^{|\ell - |\beta_1||}    d\sigma d\xi.   
$$
If we choose $N$ large enough, then the integral converges, so we have that $||\pa^{\alpha}_{\lambda} \pa^{\beta}_{\mu} \pa^{\delta}_{(x,X,V)}   r(x,X,V,\lambda,\sigma,t) ||$ is bounded by a multiple of 
$$
(1 + |\lambda)|)^{m+m'- |\alpha|}(1 + |\mu|)^{\ell+\ell'-|\beta|} 
$$
as required.

To determine the  asymptotic expansion, we proceed as follows.   By Lemma \ref{BFresults3},  $\what{r}(t)\sim \varsigma(\what{\theta}^{\alpha}(p_t) \circ \what{\theta}^{\alpha}(q_t))_{t^{-1}}$, so we may work with $\varsigma(\what{\theta}^{\alpha}(p_t) \circ \what{\theta}^{\alpha}(q_t))$.  In particular we wish to invoke the results of Widom, \cite{Wid2, Wid1}.  Let $(Z_1,Z_{\nu},Z_2) \in T(TF)_{(x,X)}$, and consider the local diffeomorphism
$$
\wtit{\exp}_{(x,X)}(Z_1,Z_{\nu},Z_2)  \,\,=\,\,  
( \exp_{x}(Z_1,Z_{\nu}),\mathcal{T}_{x,\exp_{x}(Z_1,Z_{\nu})}(X+Z_2) ).
$$
Let $P$ be an operator on sections of $\what{E}_{\cS}$, $(x, X) \in TF$, $(\eta, \zeta, \sigma)  \in   T^*(TF)_{(x, X)}$, and $u_{(x, X)} \in (\what{E}_{\cS})_{(x, X)}$.  Set 
$$
\wtit{\varsigma}(P)(x, X,\eta, \zeta, \sigma)(u_{(x, X)}) \,\, = \,\, 
$$
$$ 
P \Big((x',X') \mapsto e^{i\langle \wtit{ \exp}^{-1}_{(x,X)}(x',X'),(\eta, \zeta, \sigma)\rangle}
\wtit{\alpha}((x,X),(x',X'))\mathcal{T}_{x,x'}(u_{(x, X)})\Big) \, |_{(x', X')=(x, X)}.
$$
It is immediate that 
$$
\varsigma(P)(x, X,\eta, \zeta, \sigma) \,\, = \,\, \wtit{\varsigma}(P)(x, X,\eta, \zeta, \sigma-\zeta).
$$

\begin{lemma}
Let $\pi:A \to M$ be a vector bundle with connection $\nabla^A$ over a manifold $M$ with connection $\nabla^M$.  Then for $Y \in A$ with $\pi(Y)=x$, and $(X,Z) \in TA_Y \simeq TM_x \oplus A_x$, the  path $\gamma(t) = \mathcal{T}_{x, \exp_x(tX)}(Y+tZ)$ is a geodesic in $A$ for the connection $\nabla = \pi^*\nabla^M \oplus \pi^*\nabla^A$.
\end{lemma}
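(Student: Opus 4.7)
The plan is to read off $\dot\gamma$ in the horizontal/vertical decomposition of $TA$ determined by $\nabla^A$, and then check the geodesic equation component by component. Write $e(t) = \exp_x(tX)$, so that $\pi\circ\gamma = e$, and let $\mathrm{Hor}\oplus\mathrm{Ver}\simeq \pi^*TM\oplus\pi^*A$ be the usual splitting of $TA$, under which the connection $\nabla = \pi^*\nabla^M\oplus\pi^*\nabla^A$ is defined. By convention, for a smooth curve $\beta$ in $A$ with base curve $\pi\circ\beta = e$, the horizontal part of $\dot\beta$ is $\dot e$ and the vertical part is the $A$-valued covariant derivative of $\beta$ along $e$, namely $\nabla^A_{\dot e}\beta$.

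First I would compute these two components for $\gamma$. The horizontal part of $\dot\gamma(t)$ is $\dot e(t)$, by definition of horizontal lift. For the vertical part, observe that $\gamma(t) = \mathcal T_{x,e(t)}(Y+tZ)$, so pulling back to $A_x$ by parallel translation gives the curve $\widetilde\gamma(t) = Y+tZ$ in $A_x$ with $\widetilde\gamma'(t) = Z$. Since parallel translation is the unique horizontal lift in $A$ along $e$, the vertical part of $\dot\gamma(t)$ equals $\mathcal T_{x,e(t)}(Z)$, viewed as an element of $A_{e(t)}$. In the splitting $TA_{\gamma(t)} \simeq TM_{e(t)}\oplus A_{e(t)}$ we therefore have
\[
\dot\gamma(t) \,=\, \bigl(\dot e(t),\; \mathcal T_{x,e(t)}(Z)\bigr).
\]

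Next I would verify $\nabla_{\dot\gamma}\dot\gamma = 0$ componentwise. For the horizontal component, $(\pi^*\nabla^M)_{\dot\gamma}(\dot e) = \nabla^M_{\dot e}\dot e = 0$, since $e = \exp_x(tX)$ is a geodesic in $M$. For the vertical component, $(\pi^*\nabla^A)_{\dot\gamma}\bigl(\mathcal T_{x,e(t)}(Z)\bigr) = \nabla^A_{\dot e}\bigl(\mathcal T_{x,e(t)}(Z)\bigr) = 0$, because $\mathcal T_{x,e(t)}(Z)$ is by construction the parallel translate of $Z\in A_x$ along $e$ and hence parallel. Combining the two gives $\nabla_{\dot\gamma}\dot\gamma = 0$, so $\gamma$ is a geodesic for $\nabla$. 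Matching initial conditions $\gamma(0) = Y$ and $\dot\gamma(0) = (X,Z)$ under the identification $TA_Y\simeq TM_x\oplus A_x$ is immediate from the construction.

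The only real subtlety is checking that the definition of $\nabla = \pi^*\nabla^M\oplus\pi^*\nabla^A$ actually corresponds to the horizontal/vertical geodesic equation used above; this is a matter of unwinding the definition of the pullback connection on $TA$ via the splitting induced by $\nabla^A$, and it is standard. Once that identification is in place, everything else is essentially the observation that the two defining features of $\gamma$, namely that its base curve is a geodesic of $\nabla^M$ and that its fiber-valued lift is parallel for $\nabla^A$, are exactly the two geodesic equations for $\nabla$.
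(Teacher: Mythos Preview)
Your proof is correct and follows essentially the same approach as the paper's: both compute $\dot\gamma(t) = (\dot e(t),\,\mathcal{T}_{x,e(t)}(Z))$ in the splitting $TA \simeq \pi^*TM \oplus \pi^*A$, then verify $\nabla_{\dot\gamma}\dot\gamma = 0$ componentwise using that $e$ is a geodesic for $\nabla^M$ and that $\mathcal{T}_{x,e(t)}(Z)$ is parallel for $\nabla^A$. The paper's version is slightly more compressed, writing $\dot\gamma$ as the pullback of a section of $TM \oplus A$ and applying the definition of the pullback connection in one line, but the content is the same.
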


\begin{proof}
Set $\sigma(t) = \exp_x (tX)$, and denote the derivatives of $\gamma$ and $\sigma$ by $\stackrel{.}{\gamma}$ and $\stackrel{.}{\sigma}$.  Then 
$$
\stackrel{.}{\gamma} \hspace{-0.1cm}(t)  \,\, = \,\, (\stackrel{.}{\sigma}\hspace{-0.1cm}(t), \mathcal{T}_{x, \sigma(t)}(Z))
$$
is the pull back of a section of $TM \oplus A$, also denoted $(\stackrel{.}{\sigma}\hspace{-0.1cm}(t), \mathcal{T}_{x, \sigma(t)}(Z))$. In addition,  $\pi_*(\stackrel{.}{\gamma}\hspace{-0.1cm}(t)) = \,\, \stackrel{.}{\sigma}\hspace{-0.1cm}(t)$, since $(0,\mathcal{T}_{x, \sigma(t)}(Z))$ is tangent to the fibers of $A \to M$.
Thus
$$
\nabla_{\stackrel{.}{\gamma}(t)} \stackrel{.}{\gamma} \hspace{-0.1cm}(t) \,\, = \,\, 
(\pi^*\nabla^M \oplus \pi^*\nabla^A)_{\stackrel{.}{\gamma}(t)} \pi^*(\stackrel{.}{\sigma}\hspace{-0.1cm}(t), \mathcal{T}_{x, \sigma(t)}(Z))  \,\, = \,\, 
$$
$$
(\nabla^M \oplus \nabla^A)_{\pi_*(\stackrel{.}{\gamma}(t))}(\stackrel{.}{\sigma}\hspace{-0.1cm}(t), \mathcal{T}_{x, \sigma(t)}(Z))  \,\, = \,\, 
(\nabla^M_{\stackrel{.}{\sigma}(t)}\stackrel{.}{\sigma}\hspace{-0.1cm}(t),  \nabla^A_{\stackrel{.}{\sigma}(t)} \mathcal{T}_{x, \sigma(t)}(Z))  \,\, = \,\, 
(0,0),
$$
as required.
\end{proof}

Thus $\wtit{\exp}$ is the usual exponential for the manifold $TF$ for the connection which is the pull back of the  Levi-Civita connection on $TM$ to $ \pi^*(TM)$ direct sum with the pull back to $\pi^*(TF)$ of the  connection on $TF$ induced from the Levi-Civita connection on $TM$.  
It follows that the function  $\langle \wtit{\exp}^{-1}_{(x,X)}(x',X'),(\eta, \zeta, \sigma)\rangle$ satisfies  Proposition 2.1 of \cite{Wid2}, see \cite{Remp}, locally.     In addition, $\wtit{\varsigma}(P)$ is the usual symbol associated to this connection for the operator $P$.  Thus we may combine Widom's results with the argument in \cite{BF}, pp.\ 22-23, to get that there are differential operators $\wtit{a}_n$ (which decrease the grading by $n$, and which do not differentiate in the $t$ variable) so that for $p(t)$ and $q(t)$ asymptotic symbols in $SC^{\infty,\infty}(TF, \what{E}_{\cS})$,
$$
\wtit{\varsigma}(\what{\theta}^{\alpha}(p_t) \circ \what{\theta}^{\alpha}(q_t)) 
\,\,  \sim \,\, 
\sum_{n=0}^{\infty} t^n \wtit{a}_n(p,q)_{t}.
$$
Thus we have
\begin{multline*}
\varsigma(\what{\theta}^{\alpha}(p_t) \circ \what{\theta}^{\alpha}(q_t))(x, X,\eta, \zeta, \sigma)    \,\, = \,\, 
\wtit{\varsigma}(\what{\theta}^{\alpha}(p_t) \circ \what{\theta}^{\alpha}(q_t)) (x, X,\eta, \zeta, \sigma-\zeta)
\,\,  \sim \,\, 
\\
\sum_{n=0}^{\infty} t^n \wtit{a}_n(p,q)_{t}(x, X,\eta, \zeta, \sigma-\zeta)  \,\, = \,\,
\sum_{n=0}^{\infty} t^n  \what{a}_n(p,q)_t(x, X,\eta, \zeta, \sigma).
\end{multline*}
where the $\what{a}_n$ are also differential operators which decrease the grading by $n$, and do not differentiate in the $t$ variable.  For asymptotic symbols $p(t)$ and $q(t)$ in $SC^{\infty,\infty}(M, E)$ we have 
$$
p \circ q \,\, := \,\,  \varsigma(\theta^{\alpha}(p_t)\circ \theta^{\alpha}(q_t))_{t^{-1}} 
\,\, := \,\, \varsigma(\what{\theta}^{\alpha}(\what{p}_t) \circ \what{\theta}^{\alpha}(\what{q}_t))_{t^{-1}}  \,\, \sim \,\, 
\sum_{n=0}^{\infty} t^n \what{a}_n(\what{p},\what{q})   \,\, = \,\, 
\sum_{n=0}^{\infty} t^n \what{a}_n(p,q).
$$
The $\what{a}_n$ acting on elements in $SC^{\infty,\infty}(M, E)$ are determined by how they act on symbols which are polynomial in $\zeta$, $\eta$, and $\sigma$, so they must be the $a_n$ of Theorem \ref{comp}.  Finally we have 
$$
p \circ q \,\, \sim \,\,\sum_{n=0}^{\infty} t^n a_n(p,q) \,\, \sim \,\,  
\sum_{n,k,k'=0}^{\infty} t^n a_n(t^k p_k, t^{k'} q_{k'}) \,\, = \,\,  
\sum_{n,k,k'=0}^{\infty} t^{n+k+k'} a_n(p_k, q_{k'}),
$$
giving the asymptotic expansion for $p \circ q$, and identifying its leading symbol as $a_0(p_0,q_0)$. 
\end{proof}
In the case of a Riemannian foliation, formula $(2)$ in Theorem \ref{mainlemma} simplifies, just as the formula in Theorem \ref{comp} does, and we get:

\begin{corollary}
Suppose $F$ is a Riemannian foliation, and write $p$ and $q$ as functions of $x, \eta, \zeta, \sigma$, and $t$.  Then, under the assumptions of Theorem \ref{mainlemma},  
$$
a_0(p_0,q_0)(x, \eta, \zeta, \sigma,t)  \,\, = \,\,  e^{-\frac{1}{4}\Omega_{\nu}(\pa/\pa \eta,\pa/\pa \eta') }p_0(x,\eta, \zeta,\sigma,t) \wedge q_0(x, \eta', \zeta,\sigma,t) \, |_{\eta'=\eta}.
$$
\end{corollary}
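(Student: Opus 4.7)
The plan is to derive this corollary as an essentially immediate consequence of Theorem \ref{mainlemma}, using precisely the same simplification mechanism that was carried out at the end of Section \ref{polythm} to simplify the formula of Theorem \ref{comp} in the Riemannian case. First, I will apply Theorem \ref{mainlemma}(2) to obtain
$$
a_0(p_0,q_0)(x,\xi,\sigma,t) \,\, = \,\, e^{-\frac{1}{4}\Omega_{\nu}(\pa/\pa(\xi,\sigma),\pa/\pa(\xi',\sigma'))} p_0(x,\xi,\sigma,t)\wedge q_0(x,\xi',\sigma',t)\,|_{(\xi',\sigma')=(\xi,\sigma)},
$$
which by Definition \ref{Omega} is an exponential whose exponent is a sum of four pieces: $\Omega_\nu(\pa/\pa\xi,\pa/\pa\xi')$, $\Omega_\nu(\pa/\pa\xi,\pa/\pa\sigma')$, $\Omega_\nu(\pa/\pa\sigma,\pa/\pa\xi')$, and $\Omega_\nu(\pa/\pa\sigma,\pa/\pa\sigma')$.

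The key input for the Riemannian case is that we use the Bott connection on $\nu^*$, for which the curvature $\Omega_\nu$ satisfies $\Omega_\nu(Y,\cdot)=0$ whenever $Y\in TF$, since $\Omega_\nu$ is locally the pullback of the curvature on any transversal. Writing $\xi=(\eta,\zeta)$ with $\eta\in\nu^*$ and $\zeta\in T^*F$, and recalling that $\sigma\in T^*F$, I will inspect the coefficient expression $(\Omega_\nu)^k_{\ell,i,j}=\langle\Omega_\nu(e_i,e_j)e_\ell,e_k\rangle$ and observe that it vanishes whenever either $e_i$ or $e_j$ lies in $TF$. Consequently, in the four sums making up $\Omega_\nu(\pa/\pa(\xi,\sigma),\pa/\pa(\xi',\sigma'))$, only the indices $i,j$ with $e_i,e_j\in\nu$ contribute; equivalently, only derivatives in the $\eta$ and $\eta'$ directions survive.

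Therefore the three mixed pieces involving $\pa/\pa\sigma$, $\pa/\pa\sigma'$, $\pa/\pa\zeta$, or $\pa/\pa\zeta'$ all vanish identically, and only the $\Omega_\nu(\pa/\pa\eta,\pa/\pa\eta')$ piece remains. Substituting this back into the formula given by Theorem \ref{mainlemma}(2) yields
$$
a_0(p_0,q_0)(x,\eta,\zeta,\sigma,t)\,\,=\,\,e^{-\frac{1}{4}\Omega_\nu(\pa/\pa\eta,\pa/\pa\eta')} p_0(x,\eta,\zeta,\sigma,t)\wedge q_0(x,\eta',\zeta,\sigma,t)\,|_{\eta'=\eta},
$$
which is the desired formula. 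There is no real obstacle here: the entire argument is a mechanical consequence of Theorem \ref{mainlemma} together with the characterizing property of the Bott connection in the Riemannian case, and parallels exactly the simplification already carried out in Section \ref{polythm} for the symbol composition of polynomial symbols.
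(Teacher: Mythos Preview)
Your proposal is correct and takes essentially the same approach as the paper: the paper simply states that formula~(2) in Theorem~\ref{mainlemma} simplifies ``just as the formula in Theorem~\ref{comp} does,'' referring back to the argument at the end of Section~\ref{polythm} where the Bott connection property $\Omega_\nu(Y,\cdot)=0$ for $Y\in TF$ kills all terms except $\Omega_\nu(\pa/\pa\eta,\pa/\pa\eta')$. Your write-up spells out this mechanism explicitly, which is exactly what is intended.
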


\appendix

\section{proof of Lemma \ref{simOps2}}

We now give the proof of \\ 
{\bf Lemma \ref{simOps2}}
{\it {
If a family of symbols $p(t) \in SC^{-\infty,-\infty}(TF,\what{E}_{\cS})$  and  $p(t) \sim 0$, then $P_t =\what{\theta}^{\alpha}(p(t)_t)\sim 0$.  The same result holds for $p(t) \in SC^{-\infty,-\infty}(M,E)$.
}}

Recall that  $p(t) \sim 0$ means that for all $N$, $\lim_{t \to 0} t^{-N}p(t) \,\, = \,\, 0 $ in $SC^{-\infty,-\infty} (TF, \what{E}_{\cS})$, and  note  that $p(t)$ is not assumed to be an asymptotic symbol.

\begin{proof}

First assume that the $(x,X)$  support of the symbol is a compact subset of a cube $I^{n+p} \subset TF$. 
The operator norm $||P_t||_{0,0}$ is bounded by the Hilbert-Schmidt norm of $P_t$, which in turn coincides with the $L^2$ norm over $I^{n+p}\times \R^{n+p} \subset T^*(TF)$ of the Schwartz kernel $K_t$ of $P_t$.   Now, $K_t$ is smooth and is given, when the bundle is trivialized, by
$$
K_t ((x,X),(x',X')) \,\, = \,\,  \wtit{\alpha}((x, X),(x',X'))\int_{\R^{n+p}} e^{i\langle(x-x',X-X'),(\xi,\sigma) \rangle} p(x,X,\xi, \sigma,t)_t  d\xi d\sigma.
$$
The norm of $\wtit{\alpha}((x, X),(x',X'))$ is uniformly bounded, so we will ignore it.
Using Plancherel, that is the fact that the Fourier transform is an isometry on $L^2$, we get 
$$
\int_{\R^{n+p}} || K_t((x,X),(x',X'))||^2 dx' dX' \,\, = \,\, \int_{\R^{n+p}}||p(x,X,\xi, \sigma,t)_t||^2 \, d\xi d\sigma .
$$
Thus modulo constants, we get 
$$
 ||P_t||^2_{0,0} \,\, \leq \,\, \int_{I^{n+p} \times \R^{n+p}} ||p(x,X,\xi, \sigma,t)_t ||^2 \, d\xi d\sigma  dx dX.
$$

Let $N > 0$ be given, and choose $\what{N} >N+ (n+p)/2$.  Recall that  we are assuming that $p(t) \in  SC^{-\infty,-\infty}(TF,\what{E}_{\cS})$ and $p(t) \sim 0$.  So in particular,  $p(t) \in  SC^{-n,-p}(TF,\what{E}_{\cS})$ and 
$$
\lim_{t \to 0} t^{-\what{N}} p(t) \,\, = \,\, 0 
$$
in $SC^{-n,-p}(TF,\what{E}_{\cS})$.   Thus for each $t$, there is a constant $C_t$ so that
$$
 t^{-\what{N}} || p(x,X,\xi,\sigma,t) || \,\, \leq \,\, C_t(1 + | \xi |)^{-n}(1+| \sigma |)^{-p},
$$
and $C_t \to 0$ as $t \to 0$.  
So, for $t$ small enough,   $||p(x,X,\xi, \sigma,t)_t || \leq   t^{\what{N}}(1+|t\xi|)^{-n}(1+|t\sigma|)^{-p}$.  Thus, modulo constants,
$$
 t^{-N}||P_t||_{0,0}   \,\, \leq \,\,   t^{-N}\Big[ \int_{I^{n+p} \times \R^{n+p}} ||p(x,X,\xi, \sigma,t)_t ||^2 \, d\xi d\sigma dx dX\Big]^{1/2} \,\, \leq \,\,
 $$
 $$ 
t^{\what{N}-N}  \Big[\int_{\R^{n+p}} (1+|t\xi|)^{-2n} (1+|t\sigma|)^{-2p}d\xi d\sigma  \Big]^{1/2}    \,\, = \,\, 
t^{\what{N}- (N+(n+p)/2)}  \Big[\int_{\R^{n+p}} (1+|\xi|)^{-2n} (1+|\sigma|)^{-2p}d\xi d\sigma  \Big]^{1/2}, 
$$
by making the change of coordinates $(\xi,\sigma) \to (\xi/t,\sigma/t)$.  This last goes to zero as $t \to 0$.

To extend to the case where the $(x,X)$ support is not necessarily compact, we note that the estimates on $p(t)_t$ are uniform on $TF$, and since the geometry of $TF$ is bounded, we may assume that we have a countable locally finite cover of $TF$ by cubes $I^{n+p}$ whose diameters and volumes are uniformly bounded.  Given any $L^2$ section $u$ of $\what{E}_{\cS}$, we may write it as a countable sum of $L^2$ sections, whose supports are pairwise disjoint, each being contained in a different cube.  The result for $||P_t||_{0,0}$ then follows from standard techniques.  {{Indeed, using again the bounded geometry assumption, there is a uniform upper bound on the local norms, and $||P_t||_{0,0}$ can then be estimated by the supremum of these local norms.}}

Next consider  $||P_t||_{s,k}= ||(1 + \nabla^*\nabla)^{k/2}P_t  (1 + \nabla^*\nabla)^{-s/2} ||_{0,0}$.  The operators 
$\what{\theta}^{\alpha}(1 + |\xi|^2 + |\sigma|^2)$ and $1 + \nabla^*\nabla$ are both second order (uniformly) elliptic differential operators on $\what{E}_{\mathcal{S}}$, so we may also {{use the equivalent norm}}
$$
||P_t||_{s,k}= ||\what{\theta}^{\alpha}((1 + |\xi|^2 + |\sigma|^2)^{k/2})P_t  \what{\theta}^{\alpha}((1 + |\xi|^2 + |\sigma|^2)^{-s/2}) ||_{0,0}.
$$
Now $(P_t  \what{\theta}^{\alpha}((1 + |\xi|^2 + |\sigma|^2)^{-s/2})  =  
\what{\theta}^{\alpha}(p_t)  \what{\theta}^{\alpha}((1 + |\xi|^2 + |\sigma|^2)^{-s/2})$, and if we set 
$q^{-s}(x,X,\xi,\sigma,t)  = (1 + |\xi/t|^2 + |\sigma/t|^2)^{-s/2}$, we have
$$
P_t  \what{\theta}^{\alpha}((1 + |\xi|^2 + |\sigma|^2)^{-s/2})  \,\, = \,\,   \what{\theta}^{\alpha}(p_t) \what{\theta}^{\alpha}(q^{-s}_t).
$$
As in the proof of Theorem  \ref{mainlemma}, we may assume that $[A_{Z,V}^{-1}]^*, I + W_Z, \mathcal{T}_{x,x_1}, \mathcal{T}^{-1}_{x,x_1},$ and  $\mathcal{T}_{0,1,2}$ are the identity when computing estimates.  Note that  $\what{\alpha}(V)$ actually depends on $Z$, as do other terms we are ignoring, so we write them as $\what{\alpha}(V,Z)$. 
Set
$$
r(x,X,Z,\xi,\sigma,t)_t  \,\, = \,\, 
\int_{T^*(TF)_{(x,X)} \times T(TF)_{(x,X)}}   \hspace{-3.0cm}    e^{-i\langle  V,(\lambda,\mu) \rangle}  \what{\alpha}(V,Z)   p(x,X, t\xi +t\lambda, t\sigma + t\mu,t)
(1 + |\xi|^2 + |\sigma|^2)^{-s/2}     dV d\lambda d\mu,
$$
and
$$
\what{r}(x,X,\xi,\sigma,t)_t   \,\, = \,\,  \int_{T^*(TF)_{(x,X)} \times T(TF)_{(x,X)}}   \hspace{-3.0cm} 
 e^{-i\langle  Z,(\what{\xi},\what{\sigma})\rangle} r(x,X, Z, t \what{\xi}+t\xi, t \what{\sigma}+t\sigma,t)\, 
 dZ d\what{\sigma} d\what{\xi} \,\, = \,\,
 $$
 $$
 \int \int  e^{-i\langle  Z,(\what{\xi},\what{\sigma})\rangle}
e^{-i\langle  V,(\lambda,\mu) \rangle}  \what{\alpha}(V,Z)   p(x,X, t\what{\xi} +t\xi +t\lambda, t\what{\sigma} +t\sigma + t\mu,t)
(1 + |\what{\xi}+ \xi|^2 + |\what{\sigma}+ \sigma|^2)^{-s/2}     dV d\lambda d\mu
 dZ d\what{\sigma} d\what{\xi}.
 $$
By Lemma \ref{BFresults2} and the proof of Theorem \ref{mainlemma}, and modulo constants,  
$$
\what{\theta}^{\alpha}(p_t) \what{\theta}^{\alpha}(q^{s}_t) \,\, = \,\,
\what{\theta}^{\alpha}(\what{r}(t)_t).
$$

{{Recall that, thanks to  the bounded geometry assumption,  the support of $\what{\alpha}$ is contained in a uniform ball bundle over the total manifold $TF$ and the Fourier transform $FT ((V, Z)\mapsto \what{\alpha}((x, X); (V,Z))$  is of Schwartz class  uniformly in the $(x, X)$ variables, i.e. the Schwartz semi-norms  are uniformly bounded in the $(x,X)$ variables.}}   Now 
$$
\what{r}(x,X,\xi,\sigma,t)_t  \,\, = \,\,  FT^{-1}\Big[FT(\what{\alpha}(V,Z))(\what{\xi},\what{\sigma},\lambda,\mu) p(x,X, t\what{\xi} +t\xi +t\lambda, t\what{\sigma} +t\sigma + t\mu,t)
$$
$$(1 + |\what{\xi}+ \xi|^2 + |\what{\sigma}+ \sigma|^2)^{-s/2} \Big](0,0),
$$
so, $\what{r}(t)_t$ has the same properties as $p(t)_t$.
Namely, $\what{r}(t)_t$ is of Schwartz class uniformly on each fiber in the sense described above, and for all $\what{N} > 0$,  there is $C_t \in \R$, so that $\lim_{t \to 0} C_t = 0$, and
$$
t^{-\what{N}} ||\what{r}(x,X,\xi,\sigma,t)_t|| \,\,  \leq \,\,  C_t (1+ |t\xi|)^{-n-|s|} (1+ |t\sigma|)^{-p-|s|} (1 + |\xi| + | \sigma |)^{-s}.
$$
To see this, note that for any $\what{N}$ and any $(m, \ell)\in \Z^2$, there exists $C_t=C_t(m,\ell, \what{N})$ which goes to zero as $t \to 0$, so that 
$$
t^{-\what{N}} ||p (x,X,\xi+\what{\xi}+\lambda, \sigma+\what{\sigma}+\mu,\sigma,t)_t||  \,\,  \leq \,\, C_t (1+ |t\xi|+|t\what{\xi}| + |t\lambda|)^{m} (1+ |t\sigma|+|t\what{\sigma}|+ |t\mu|)^{\ell}.
$$
Applying Peetre's inequality gives
$$
(1+ |t\xi|+|t\what{\xi}| + |t\lambda|)^{m} (1+ |t\sigma|+|t\what{\sigma}|+ |t\mu|)^{\ell}\,\,  \leq \,\, 
(1+ |t\what{\xi}| + |t\lambda|)^{|m|} (1+|t\what{\sigma}|+ |t\mu|)^{|\ell |} (1+ |t\xi|)^m (1+ |t\sigma|)^\ell,
$$
and 
$$
(1 + |\what{\xi}|+ |\xi| + |\what{\sigma}|+ |\sigma|)^{-s} \,\,  \leq \,\, 
(1+|\xi| +  |\sigma|)^{-s} (1 + |\what{\xi}|+  |\what{\sigma}|)^{|s|}.
$$
Set $\varphi:=FT(\what{\alpha})$, which is a rapidly decaying function.   Then using the estimate of $1+r^2$ by $(1+r)^2$ for $r \geq 0$, we have
\begin{multline*}
t^{-\what{N}} ||\what{r}(x,X,\xi,\sigma,t)_t||\,\,  \leq \,\, C_t (1+ |t\xi|)^m (1+ |t\sigma|)^\ell (1+|\xi| +  |\sigma|)^{-s} 
 \\
\int \varphi(\what{\xi},\what{\sigma}; \lambda, \mu) (1+ |t\what{\xi}| + |t\lambda|)^{|m|} (1+ |t\what{\sigma}|+ |t\mu|)^{|\ell|} (1 + |\what{\xi}| + |\what{\sigma}|)^{|s|} d\what{\xi}d\what{\sigma}d\lambda d\mu.
\end{multline*}
For $|t|\leq 1$, we deduce  that 
\begin{multline*}
t^{-\what{N}} ||\what{r}(x,X,\xi,\sigma,t)_t|| \,\,  \leq \,\, C_t (1+ |t\xi|)^m (1+ |t\sigma|)^\ell (1+|\xi| +  |\sigma|)^{-s} 
\\
\int \varphi(\what{\xi},\what{\sigma}; \lambda, \mu) (1+ |\what{\xi}| + |\lambda|+|\what{\sigma}|+ |\mu|)^{|m|+|\ell | +|s|}  d\what{\xi}d\what{\sigma}d\lambda d\mu.
\end{multline*}
Since $\varphi$ is rapidly decaying, the integral is a finite constant, and replacing $(m,\ell)$ by $(-n-|s|,-p-|s|)$ gives the estimate.

To get the estimate for $t^{-N} ||\theta^{\alpha}(\what{r}_t)||_{0,0}$,   proceed as follows.  Let $N > 0$ be given, and  choose $\what{N} > N + (n + p)/2 $.  Then for small $t$, {{as above}} and modulo constants,
$$
t^{-N} ||\theta^{\alpha}(\what{r}_t)||_{0,0} \,\, \leq  \,\,  t^{-N} \Big[ \int ||\what{r}(x,X,\xi,\sigma,t)_t||^2 d \sigma d\xi dx \Big]^{1/2}   \,\, \leq  \,\,
$$
$$
t^{\what{N}-N} \Big[ \int (1 +|t\xi|)^{-2n-2|s|}  (1 +|t\sigma |)^{-2p-2|s|} (1 + | \xi |^2 + | \sigma |^2)^{-s}  d \sigma d\xi dx \Big]^{1/2}   \,\, \leq  \,\,
$$
$$
t^{\what{N}-N} \Big[ \int (1 +|t\xi|)^{-2n-2|s|}  (1 +|t\sigma |)^{-2p-2|s|} (1 + | \xi |^2 + | \sigma |^2)^{(|s|-s)/2}  d \sigma d\xi dx \Big]^{1/2}   \,\, \leq  \,\,
$$
$$
t^{\what{N}-N} \Big[ \int (1 +|t\xi|)^{-2n-2|s|}  (1 +|t\sigma |)^{-2p-2|s|} t^{|s|-s}(t^2 + | t\xi |^2 + |t \sigma |^2)^{(|s|-s)/2} d \sigma d\xi  \Big]^{1/2}   \,\, \leq  \,\,
$$
$$
t^{\what{N}-N} \Big[ \int (1 +|t\xi|)^{-2n-2|s|}  (1 +|t\sigma |)^{-2p-2|s|} t^{|s|-s}(1 + | t\xi |^2 + |t \sigma |^2)^{(|s|-s)/2} d \sigma d\xi  \Big]^{1/2}   \,\, \leq  \,\,
$$

$$
t^{\what{N} + (|s|-s)/2  -(N + (n + p)/2 )} \Big[ \int (1 +|\xi|)^{-2n-2|s|}  (1 +|\sigma |)^{-2p-2|s|} (1 + | \xi |^2 + |\sigma |^2)^{(|s|-s)/2} d \sigma d\xi  \Big]^{1/2},
$$
which goes to zero as $t \to 0$.

Next, do the same analysis on $\what{\theta}^{\alpha}((1 + |\xi|^2 + |\sigma|^2)^{k/2})
\what{\theta}^{\alpha}(\what{r}(t)_t)$, which yields a symbol denoted $\what{w}(t)_t$ so that 
$$
\what{\theta}^{\alpha}(q^{-k}_t)\what{\theta}^{\alpha}(\what{r}_t) \,\, = \,\,  \what{\theta}^{\alpha}(\what{w}_t),
$$
and  $\what{w}(t)_t$ also has the same properties as $\what{r}$ above, mutatis mutandis.
As in the $||P_t||_{0,0}$ case, we first assume that the $(x,X)$  support of the symbol $\what{w}$ is a compact subset of a cube $I^{n+p} \subset TF$.  Then we get, modulo constants, 
$$
 ||\what{w}_t||^2_{0,0} \,\, \leq \,\, \int_{I^{n+p} \times \R^{n+p}} ||\what{w}(x,X,\xi, \sigma,t)_t ||^2 \, d\xi d\sigma  dx dX,
$$
and we may proceed as in the $||P_t||_{0,0}$ case to finish the proof.
\end{proof}

\section{Bifiltered calculus on complete foliations}\label{Bifiltered}

%\subsection{Sobolev spaces}

Suppose that $u \in C^{\infty}_c(\R^p\times \R^q=\R^n, \C^{a })$,  and denote its Fourier transform by ${u}$.   For all $s,k \in \R$, the Sobolev $s,k$ norm of $u$ is defined by:
$$
\|u\|^2_{s,k} \,\, = \,\, \int_{\varsigma \in \R^p,  \eta \in \R^q} \; \; |{u}(\varsigma,\eta)|^2(1 + |\varsigma|^2 + |\eta|^2)^s(1 + |\varsigma|^2)^k d\varsigma d\eta.
$$

\begin{definition}\cite{GreenleafUhlmann, K97}
The space $\oH^{s,k}(\R^n,\R^p,\C^{a })$ is the completion of $C^{\infty}_c(\R^n, \C^{a })$ under the norm $\| \cdot \|_{s,k}$. A similar definition works for any open subsets $U\subset \R^p$ and $V\subset \R^q$ yielding the space $\oH^{s,k}(U,V,\C^{a })$.
\end{definition}

Denote by $ M_{a }(\C)$ the $a$ by $a$ complex matrices.

%\subsection{Pseudodifferential calculus}
\begin{definition}
An element
$k(z,x,y,\sigma,\varsigma,\eta) \in C^{\infty}(\I^p \times \I^p \times \I^q \times \R^p \times \R^p \times \R^q, M_{a }(\C))$ belongs to the class $S^{m,\ell}(\I^p \times \R^n, \R^p, M_{a }(\C))$, (with $n = p+q$),   if for any multi-indices $\alpha$, $\beta$,  and $\gamma$, there is a constant $C_{\alpha,\beta, \gamma} > 0$ so that 
\begin{equation}\label{LocalSymbol}
\| \pa^{\alpha}_{\varsigma,\eta} \pa^{\beta}_{\sigma} \pa^{\gamma}_{z,x,y} k(z,x,y,\sigma,\varsigma,\eta)\| \,\, \leq \,\, C_{\alpha,\beta, \gamma}(1 + |\varsigma|+|\eta|)^{m- |\alpha|}(1+|\sigma|)^{\ell - |\beta|}.
\end{equation}
\end{definition}
Such a $k$ defines an operator 
$A:C^{\infty}_c(\I^n, \C^{a }) \to C^{\infty}(\I^n, \C^{a })$ by the formula
\begin{equation}\label{theta}
 Au(x,y) \,\, = \,\, (2\pi)^{-2p-q} \int e^{i[(x-x'-z)\varsigma +(y-y')\eta + z\sigma]} k(z,x,y,\sigma,\varsigma,\eta)u(x',y') dz dx' dy' d\varsigma  d\eta d\sigma.
\end{equation}

The distributional Schwartz kernel of $A$ is thus the oscillating integral
$$
K_A (x,y;x',y') \,\, = \,\, (2\pi)^{-2p-q} \int e^{i[(x-x'-z)\varsigma +(y-y')\eta + z\sigma]} k(z,x,y,\sigma,\varsigma,\eta) dz d\varsigma  d\eta d\sigma.
$$
If  this Schwartz kernel is uniformly supported in $\I^n \times \I^n$, we write 
$A \in \Psi^{m,\ell}(\I^n,\I^p, \C^{a})$.\\

\begin{proposition}\label{SobolevBound}\cite{K97}
Any operator $A\in \Psi^{m,\ell}(\I^n,\I^p, \C^{a})$ defines, for any $s$ and $k$,  a continuous mapping 
$$
A: \oH^{s,k}(\I^n,\I^p, \C^{a})\longrightarrow \oH^{s-m,k-\ell}(\I^n,\I^p, \C^{a}),
$$
In particular, if $m\leq 0$ and $\ell \leq 0$ then $A$ extends to an $L^2$-bounded operator.
\end{proposition}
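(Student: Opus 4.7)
The strategy is the standard reduction to $L^2$-boundedness via an elliptic parametrix, but carried out simultaneously in both filtrations. I would proceed in three steps.

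First, I would introduce the reference operators $\Lambda^{s,k}$ defined as the Fourier multipliers with symbol $(1+|\varsigma|^2+|\eta|^2)^{s/2}(1+|\varsigma|^2)^{k/2}$. By Plancherel, these are isometric isomorphisms $\oH^{s,k}(\R^n,\R^p,\C^{a}) \to L^2(\R^n,\C^{a})$, and a direct check shows that the symbols satisfy the bifiltered estimates \eqref{LocalSymbol} with orders $(s,k)$. Their inverses $\Lambda^{-s,-k}$ are likewise bifiltered operators of orders $(-s,-k)$ in the sense of the uniformly supported calculus, up to uniformly smoothing remainders (obtained by multiplying by a suitable cutoff $\chi \in C^{\infty}_c(\I^n\times\I^n)$ identically $1$ near the diagonal, and using Proposition \ref{BFresults4}-style arguments to handle the tail). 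Hence by definition $u \in \oH^{s,k}$ if and only if $\Lambda^{s,k} u \in L^2$, with equivalence of norms.

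Second, I would reduce the claim to $L^2$-boundedness. Writing $B = \Lambda^{s-m,k-\ell} \circ A \circ \Lambda^{-s,-k}$, continuity of $A : \oH^{s,k} \to \oH^{s-m,k-\ell}$ is equivalent to $L^2$-boundedness of $B$. The composition formula for the bifiltered uniformly supported calculus (the same mechanism used in Section \ref{polythm} and extended in Theorem \ref{mainlemma}, carried out here on the flat chart $\I^n$ with $E$ trivial and no Clifford factors) produces $B \in \Psi^{0,0}(\I^n,\I^p,\C^{a})$, modulo uniformly smoothing operators. Uniformly smoothing operators are bounded on every $\oH^{s,k}$ by a routine Schur test on their Schwartz kernels, using that the kernels and all their derivatives are bounded with uniform decay away from the diagonal. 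So the problem reduces to showing that every $B \in \Psi^{0,0}(\I^n,\I^p,\C^{a})$ defines an $L^2$-bounded operator.

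Third, for $L^2$-boundedness of a $\Psi^{0,0}$ operator, I would invoke a Calder\'{o}n--Vaillancourt argument adapted to the bifiltered setting. Concretely, I would partition unity on $\R^p\times\R^p\times\R^q$ (the $(\sigma,\varsigma,\eta)$ space) into boxes of size $\sim 1$ in each of the two natural scales — one for the global $(\varsigma,\eta)$ variable and one for the leafwise $\sigma$ variable — and decompose $B = \sum_{j,k} B_{jk}$ accordingly. The symbol estimates \eqref{LocalSymbol} with $m=\ell=0$ give uniform $C^N$ bounds on each piece, independent of $j,k$. A Cotlar--Stein almost-orthogonality argument, using integration by parts in $(z,x',y')$ against the oscillatory phase in \eqref{theta} (which is possible because the two filtrations give independent frequency directions), then yields uniform $L^2$-bounds $\|B_{jk}^*B_{j'k'}\| + \|B_{jk}B_{j'k'}^*\| \leq C(1+|j-j'|+|k-k'|)^{-N}$ for $N$ arbitrarily large. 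Summation gives $\|B\|_{L^2\to L^2} < \infty$.

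The main obstacle is the third step: although each individual piece is classical, one needs to verify that the oscillatory-integral integration by parts is compatible with \emph{both} the anisotropic scaling and the uniform-support/bounded-geometry assumptions, so that the almost-orthogonality constants do not secretly depend on the chart. The key input is that the two covariables $\varsigma$ (in the global $\R^n$ frequency) and $\sigma$ (in the leafwise $\R^p$ frequency) are genuinely independent, which is exactly what the bifiltered formalism of Section \ref{symbs} is designed to encode; this decoupling is what allows the Cotlar--Stein sum to converge in both directions separately.
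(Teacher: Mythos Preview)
Your three-step outline (reduce to $L^2$ via the order-shifting Fourier multipliers $\Lambda^{s,k}$, use composition in the bifiltered calculus to land in $\Psi^{0,0}$, then run a Calder\'on--Vaillancourt/Cotlar--Stein argument) is the standard and correct route, and is presumably what lies behind the references the paper cites. The paper itself does not give a proof: it simply writes ``The proof is classical, see Theorem 3.3 in \cite{GreenleafUhlmann} and \cite{K97}'' and moves on. So there is nothing to compare against beyond noting that your sketch is a reasonable unpacking of what those references contain.

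Two minor points worth tightening. First, your internal cross-references are off: the composition mechanism you need here is \emph{not} the one from Section~\ref{polythm} or Theorem~\ref{mainlemma} (those treat the Getzler-rescaled asymptotic calculus on the foliated manifold, with Clifford weights and the curvature operator $\Omega_\nu$), but rather the elementary flat-space composition of bifiltered symbols $S^{m,\ell}(\I^p\times\R^n,\R^p)$ from the appendix, which is straight out of \cite{GreenleafUhlmann} and \cite{K97}. Likewise, invoking Proposition~\ref{BFresults4} for the cutoff tail is overkill; a direct kernel estimate suffices. Second, the $\Lambda^{s,k}$ are Fourier multipliers on all of $\R^n$ and are not uniformly supported, so strictly they lie outside $\Psi^{s,k}(\I^n,\I^p,\C^a)$ as defined; your cutoff-plus-smoothing-remainder workaround is the right fix, but in a complete write-up you should verify explicitly that the remainder kernel is uniformly smoothing (i.e.\ $C^\infty$-bounded with compact support), since that is where the bounded-geometry hypothesis enters in the global version of the statement later in the appendix.
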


The proof is classical, see Theorem 3.3 in \cite{GreenleafUhlmann} and \cite{K97}.

We now extend  the previous definitions and properties to bounded geometry foliations. Let   $( M, \maF)$ be a smooth foliated Riemannian manifold.  We thus assume that the manifold $ M$ has $C^\infty$ bounded geometry and so is complete, and that all the leaves satisfy the same bounded geometry assumption.   We say in this case that the foliation has ($C^\infty$-)bounded geometry. 
%Recall that this means that the injectivity radius associated with the given Riemannian metric is positive so that we have well defined barycentric coordinates which are then assumed to have $C^\infty$-bounded changes. This latter property is equivalent to the assumption that the curvature tensor $R$ is $C^\infty$-bounded. 
%Recall that bounded geometry manifolds have open covers of finite multiplicity by Riemannian balls of a fixed radius which are domains of injectivity of the exponential map. Moreover, there exist smooth $C^\infty$-bounded partitions of unity which are subordinate to such covers.
All $\C$ vector bundles $\maE$ over $ M$ are assumed to also have  $C^\infty$-bounded geometry. In this case, we may choose a $C^\infty$-bounded Hermitian structure and consider the space $L^2 ( M, \maE)$ of $L^2$-sections of $\maE$. In fact, the Sobolev spaces associated with $\maE$ are also well defined, see for instance \cite{Shubin}. We review below the bigraded Sobolev spaces for our complete foliation. When $ M$ is compact, we recover the usual bigraded Sobolev spaces and the bifiltered calculus  as defined  in \cite{K97}. 

Let  $({ U}_i, { T}_i)_{i\in I}$ be a  good open cover of the foliation $( M, \maF)$ with finite multiplicity and such that ${ U}_i \simeq \R^p \times { T}_i$ and ${ T}_i\simeq \R^q$ so that ${ U}_i\simeq \R^p\times\R^q$. Using a classical lemma due to Gromov \cite{G81}, it is easy to check that such open cover always exists. Moreover, we may assume that the open sets ${ U}_i$ are metric balls  which are diffeomorphic images of the local exponential maps and such that any plaque in any ${ U}_i$ is the diffeomorphic image of the leafwise exponential map.  Let $\{\phi_i\}$  be a $C^\infty$-bounded partition of unity subordinate to the cover $\{{ U}_i\}$ of $ M$ \cite{Shubin}.  For $u \in C^{\infty}_c( M, \maE)$, and using the local trivializations of $\maE$ over the ${ U}_i$, we define its $s,k$ norm as 
$$
\|u\|_{s,k} \,\, = \,\,  \sum_{i}   \| \phi_i \cdot u \|_{s,k},
$$
where on the right we are thinking of the product $\phi_i \cdot u$  as an element in $ C^{\infty}_c(\R^n, \C^{a })$ using the trivializations, and the norm $ \| \cdot \|_{s,k}$ is pulled back from the norm of $\oH^{s,k}(\R^n,\R^p,\C^{a })$.
\begin{definition}
The bigraded Sobolev space $\oH^{s,k}( M,  \maF;  \maE)$ is the completion of $C^{\infty}_c( M, \maE)$ under the  norm  $\| \cdot \|_{s,k}$. 
\end{definition}

Classical arguments show that although the norms depend on the choices, the bigraded Sobolev spaces $\oH^{s,k}( M,  \maF; \maE)$ do not. 
Notice that the holonomy groupoid (which is assumed to be Hausdorff in the present paper) is also a foliated manifold of bounded geometry and thus admits the covers and partitions of unity as above which fit with the description given in  \cite{ConnesIntegration}.
Let ${ V} \simeq \I^p\times \I^q$ be a distinguished foliation chart for $\maF$.  Then  the restriction $\maE | _{{ V}} \simeq{ V}\times \C^a$. Let ${ V} \times_{\gamma}{ V}'\simeq \I^n\times \I^q$ be a  chart for the holonomy groupoid $\maG$ corresponding to $\gamma\in \maG_{{ V}}^{{ V}'}$ with ${ V}'$ a distinguished chart with the same properties.   Using these charts and trivializations, any element $A_0 \in \Psi^{m,\ell}(\I^n,\I^p, \C^{a })$,  defines an operator  
\begin{equation}\label{LocalOperator}
A :C^{\infty}_c({ V}, \maE) \longrightarrow C^{\infty}_c({ V}' ; \maE).
\end{equation}
Such operator is called an elementary operator of class $(m, \ell)$. 

\begin{definition}\
A linear map $A: C_c^\infty ( M; \maE) \rightarrow C_c^\infty ( M; \maE)$ with finite propagation is a pseudodifferential operator of class $(m, \ell)$ if it is an elementary operator in all local charts ${ V},{ V}'$ as above (with $C^\infty$-bounded coefficients with bounds independent of the chosen local charts). 
\end{definition}

The finite propagation assumption is defined using the geodesic distance and the completeness, but we could as well assume  that $A$ is uniformly supported in the sense of  \cite{NWX} without reference to the geodesic distance. Then the operator $A$ sends compactly supported smooth sections to compactly supported smooth sections. 
A uniform smoothing operator will be an operator with smooth Schwartz kernel $k$ which has finite propagation and such that $k$ is $C^\infty$-bounded. This latter property means that  we can estimate the derivatives of $k$ in local coordinates by uniform bounds over $ M\times M$. Such operator induces a bounded operator  between any {\underline{usual}} Sobolev spaces of the bounded-geometry manifold $ M$ as defined in \cite{Shubin}.   The space (obviously a $*$-algebra) of  uniform  smoothing operators is denoted by $\Psi^{-\infty} ( M, \maE)$. An easy partition of unity argument in the sense described above gives the following standard lemma for all bounded geometry foliations.

\begin{lemma}\cite{K97}
A uniform smoothing operator $T$ induces, for any $s, k, s', k'$, a bounded operator
$$
T: \oH^{s,k}( M,  \maE) \longrightarrow \oH^{s',k'}( M,  \maE).
$$
\end{lemma}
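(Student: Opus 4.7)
The plan is to reduce the claim to a family of uniform local estimates on distinguished charts, and to use finite propagation together with the finite multiplicity of the good cover to assemble the pieces. Throughout, I write $\sum_i \phi_i = 1$ with $\{\phi_i\}$ the $C^\infty$-bounded partition of unity subordinate to the good cover $({ U}_i, { T}_i)_{i\in I}$, with each ${ U}_i\simeq \I^p\times \I^q$.

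First, I would localize. Since $T$ has finite propagation $R$, one has $\phi_i T \phi_j = 0$ unless $d({ U}_i,{ U}_j)\le R$. By the $C^\infty$-bounded geometry of $( M,\maF)$ and the finite multiplicity of the cover, for each $i$ there are at most $N=N(R)$ indices $j$ with $\phi_i T\phi_j \neq 0$, and symmetrically for $j$. For any such pair, the Schwartz kernel $\phi_i(x) k(x,y) \phi_j(y)$ is a smooth section supported in ${ U}_i \times { U}_j$; pulled back through the distinguished charts (and a chart on the holonomy groupoid connecting them), it becomes an element of $C_c^\infty(\I^n \times \I^n; M_a(\C))$, whose symbol thus lies in $\bigcap_{m,\ell} S^{m,\ell}(\I^p \times \R^n, \R^p, M_a(\C))$. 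The $C^\infty$-boundedness assumption on $k$ and the bounded geometry give us uniform bounds on all derivatives of these local kernels, independent of $(i,j)$.

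Next, I would invoke Proposition \ref{SobolevBound} applied to each elementary operator $A_{ij} := \phi_i T \phi_j$ viewed as an element of $\Psi^{m,\ell}(\I^n, \I^p, \C^a)$ for \emph{any} $(m,\ell)$; this provides a bound
\[
\|A_{ij} v\|_{s'-m,\,k'-\ell} \;\le\; C_{s,k,s',k'} \|v\|_{s,k},
\]
with constant $C$ independent of $(i,j)$ (by the uniformity of the symbol seminorms from the previous step). Choosing $m=s'-s$, $\ell=k'-k$, each $A_{ij}$ is bounded $\oH^{s,k}\to \oH^{s',k'}$ uniformly in $(i,j)$.

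Finally, I would assemble. Using the definition of $\|\cdot\|_{s',k'}$ as a sum of local norms, Cauchy--Schwarz applied to the at-most-$N$ nonzero terms for each $i$, and the symmetric finite-interaction bound $N$ for each $j$:
\begin{align*}
\|Tu\|_{s',k'}^2 &= \sum_i \|\phi_i T u\|_{s',k'}^2 \;\le\; \sum_i \Big(\sum_j \|A_{ij}(\phi_j u)\|_{s',k'}\Big)^2 \\
&\le\; N \sum_{i,j}\|A_{ij}(\phi_j u)\|_{s',k'}^2 \;\le\; N C^2 \sum_{i,j}\|\phi_j u\|_{s,k}^2 \;\le\; N^2 C^2 \|u\|_{s,k}^2,
\end{align*}
which proves the asserted boundedness. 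The main obstacle is purely bookkeeping: ensuring that the $C^\infty$-boundedness of $k$ together with the uniformity of the cover and its trivializations yields symbol estimates \eqref{LocalSymbol} for the $A_{ij}$ with constants independent of $(i,j)$. Once this uniformity is established the rest is a standard Schur/Cotlar-type summation argument enabled by finite propagation.
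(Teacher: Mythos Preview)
Your overall strategy is exactly the ``easy partition of unity argument'' the paper invokes without details, so the approach matches. However, there are a few execution slips worth fixing.

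First, the global norm in this paper is defined as an $\ell^1$ sum, $\|u\|_{s,k}=\sum_i\|\phi_i u\|_{s,k}$, not an $\ell^2$ sum. Your assembly line begins with $\|Tu\|_{s',k'}^2=\sum_i\|\phi_i Tu\|_{s',k'}^2$, which is false for this norm. With the $\ell^1$ definition the argument is actually simpler: $\|Tu\|_{s',k'}=\sum_i\|\phi_i Tu\|_{s',k'}\le \sum_{i,j}\|\phi_i T\phi_j u\|_{s',k'}\le C\sum_{i,j}\|\phi_j u\|_{s,k}\le NC\,\|u\|_{s,k}$, using only that for each $j$ there are at most $N$ indices $i$ contributing.

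Second, with $A_{ij}:=\phi_i T\phi_j$ you have $A_{ij}(\phi_j u)=\phi_i T\phi_j^2 u$, so $\sum_j A_{ij}(\phi_j u)\neq \phi_i Tu$ in general. Either apply $A_{ij}$ to $u$ directly (it already localizes on the right via $\phi_j$) and then bound $\|A_{ij}u\|_{s',k'}\le C\|\phi_j u\|_{s,k}$ by inserting a cut-off $\chi_j\equiv 1$ on $\supp\phi_j$, or set $A_{ij}:=\phi_i T\chi_j$ with such $\chi_j$; the uniform $C^\infty$-bounds on $k$ still give uniform symbol estimates.

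Third, your choice $m=s'-s$, $\ell=k'-k$ has the wrong sign: to land in $\oH^{s',k'}$ from $\oH^{s,k}$ via Proposition~\ref{SobolevBound} you need $m=s-s'$ and $\ell=k-k'$. These are all easily repaired; the substance of your argument is correct and is precisely what the paper has in mind.
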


Denote by $\Psi^{m,\ell} ( M, \maF;  \maE)$ the space of operators of the form $T=A + R$ where $A$ is a uniformly supported pseudodifferential  operator of type $(m,\ell)$ and $R\in \Psi^{-\infty}( M, \maE)$ is a uniform smoothing operator. 
%From the very definition, our operators from $\Psi^{m,\ell} (\hM, \maF;  \maE)$  are  uniformly supported in $\hM\times \hM$. 
Notice that 
if $A \in \Psi^{m,\ell}( M,  \maF; \maE)$ then the formal adjoint $A^*$ also belongs to $\Psi^{m,\ell} ({ M}, \maF; \maE)$. Moeover,  if $B \in \Psi^{m',\ell'}({ M}, \maF; \maE)$, then  $A\circ B \in  \Psi^{m+m',\ell+\ell'}({ M},  \maF;  \maE)$.  The proof in the compact case given in \cite{K97}  extends again  to our setting. Indeed, by the first appendix in \cite{Shubin} we know that for any $R\in \Psi^{-\infty} ({ M}, \maE)$ and any $A\in \Psi^{m,\ell}({ M}, \maF; \maE)$, the composite operators $A\circ R$ and $R\circ A$  are uniform smoothing operators, hence belong to $\Psi^{-\infty} ({ M}, \maE)$. 
Therefore, we only need to check the same properties for locally elementary operators which are uniformly supported. Using a locally finite partition of unity of ${ M}$ as described above, this is reduced to considering an elementary operator  $A$ from sections over ${ V}$ to sections over ${ V}'$ as above.  But then we apply the techniques developped in \cite{GreenleafUhlmann}[Proposition 1.39]. Notice in addition  that  composition of compactly supported operators is compactly supported and adjoint of compactly supported is compactly supported. We can now state:

\begin{proposition}\label{SobolevBound}\cite{Shubin} 
Any operator $A\in \Psi^{m,\ell}({ M},  \maF; \maE)$ defines, for any $s$ and $k$,  a continuous mapping 
$$
A:\oH^{s,k}({ M},  \maF; \maE)\longrightarrow \oH^{s-m,k-\ell}({ M}, \maF;  \maE)),
$$
In particular, if $m\leq 0$ and $\ell \leq 0$ then $A$ extends to an $L^2$-bounded operator.
\end{proposition}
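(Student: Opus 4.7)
The plan is to globalize the local $\R^n$ bound (the earlier Proposition labeled \ref{SobolevBound} on $\I^p \times \R^n$) to the bounded-geometry foliation using the good cover $\{({ U}_i, { T}_i)\}_{i\in I}$, the $C^\infty$-bounded subordinate partition of unity $\{\phi_i\}$, and an auxiliary $C^\infty$-bounded family $\{\psi_i\}$ with $\psi_i\equiv 1$ on $\Supp(\phi_i)$ and $\Supp(\psi_i)\subset { U}_i$, which exists by Gromov's lemma together with the bounded-geometry assumption. I decompose $A = A_{\rm ps} + R$ with $A_{\rm ps}$ a uniformly supported pseudodifferential operator of class $(m,\ell)$ and $R$ a uniform smoothing operator. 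The smoothing part is dispatched by the lemma on uniform smoothing operators stated just before, so the whole issue reduces to bounding $A_{\rm ps}$.

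Next I exploit the finite propagation of $A_{\rm ps}$: there is a uniform $r>0$ such that $\phi_j\,A_{\rm ps}\,\phi_i = 0$ whenever $d({ U}_i,{ U}_j) > r$. By the bounded-geometry assumption (refining the cover if necessary), the cover remains of uniformly finite multiplicity even at scale $r$, so for each fixed $j$ only a uniformly bounded number of indices $i$ interact with it, and symmetrically. Inserting $\sum_i \psi_i \equiv 1$ on a neighborhood of $\Supp(\phi_j)$ covering all the relevant propagation distance, I write
\[
\phi_j\,A_{\rm ps}\,u \,=\, \sum_{i:\, d({ U}_i,{ U}_j)\le r} \phi_j\,A_{\rm ps}\,\psi_i \cdot (\phi_i\,u),
\]
and then estimate
\[
\|A_{\rm ps}\,u\|_{s-m,k-\ell}^2 \,\le\, C \sum_j \|\phi_j\,A_{\rm ps}\,u\|_{s-m,k-\ell}^2 \,\le\, C' \sum_{j}\sum_{i}\|\phi_j\,A_{\rm ps}\,\psi_i(\phi_i u)\|_{s-m,k-\ell}^2,
\]
where by finite multiplicity at scale $r$ the double sum is dominated by a uniform constant times $\sum_i \|\phi_i u\|_{s,k}^2 = \|u\|_{s,k}^2$.

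Each summand $\phi_j A_{\rm ps}\psi_i$, read through the distinguished chart identifications ${ U}_i\simeq \I^p\times \I^q \simeq { U}_j$ (composed with the holonomy chart for $\maG_{{ U}_i}^{{ U}_j}$), is a uniformly supported elementary operator of class $(m,\ell)$ on $\R^n$. The local Proposition then yields
\[
\|\phi_j\,A_{\rm ps}\,\psi_i(\phi_i u)\|_{s-m,k-\ell} \,\le\, C_{i,j}\,\|\phi_i u\|_{s,k},
\]
and combining everything gives the desired global bound. The $L^2$ statement for $m,\ell\le 0$ is the case $s=k=0$.

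The main technical hurdle is ensuring that the constants $C_{i,j}$ above are \emph{uniform} in $i,j$. This requires that the symbol estimates \eqref{LocalSymbol} for $\phi_j A_{\rm ps}\psi_i$ hold with bounds independent of $(i,j)$, which is precisely what the $C^\infty$-bounded geometry of $M$, $\maF$ and $\maE$ guarantees: the transition functions between overlapping distinguished charts have uniformly bounded derivatives of every order, so the local symbols, the partition-of-unity factors $\phi_i, \psi_i$, and the change-of-variable Jacobians all have $C^\infty$-semi-norms bounded independently of the chart index. Once this uniformity is in place, combined with the finite multiplicity of the cover (at scale $r$) and the boundedness of the propagation of $A_{\rm ps}$, the argument is standard and parallels the compact case treated in \cite{K97}.
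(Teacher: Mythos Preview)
Your approach is essentially the same as the paper's: the paper gives only a one-sentence proof (``this statement is again local by using a partition of unity argument in the sense of \cite{Shubin}'' after noting that the smoothing remainder is handled by the preceding lemma), and you have correctly filled in the standard details of that Shubin-style localization. One minor point: the paper defines the global Sobolev norm as an $\ell^1$-sum $\sum_i\|\phi_i u\|_{s,k}$ rather than the $\ell^2$-sum you implicitly use, so you should either invoke their equivalence via uniform finite multiplicity or phrase your estimates with the $\ell^1$ norm throughout.
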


Since any $R\in \Psi^{-\infty} ({ M}, \maE)$ induces a bounded operator between any bigraded Sobolev spaces, this statement is again local by using a  partition of unity argument in the sense of \cite{Shubin}.

\end{document}